\documentclass[12pt]{article}
\usepackage{euscript,graphicx,epstopdf,amscd,amsgen,amsfonts,amssymb,latexsym,amsmath,amsthm,graphicx,mathrsfs,times,color,overpic,bbm, array}
\usepackage{euscript,graphicx,epstopdf,amscd,amsgen,amsfonts,amssymb,latexsym,amsmath,amsthm,graphicx,mathrsfs,times,color,overpic,fourier}
\usepackage{graphicx}
\usepackage{latexsym,amssymb}
\usepackage{amsthm}
\usepackage{indentfirst}
\usepackage{amsmath}
\usepackage{color}
\usepackage{xcolor}
\usepackage{fourier}
\usepackage[all]{xy}
\usepackage[colorlinks=true, linkcolor=blue, citecolor=blue]{hyperref}
 \usepackage{authblk}

\textwidth=16. true cm
\textheight=24. true cm
\voffset=-2. true cm
\hoffset = -1.5 true cm

\newtheorem{theoremalph}{Theorem}

\newtheorem*{Main Theorem}{Main Theorem}
\newtheorem{Coro}[theoremalph]{Corollary}
\newtheorem{Theorem}{Theorem}[section]
\newtheorem*{Theorem A}{Theorem A}
\newtheorem*{Theorem A'}{Theorem A'}
\newtheorem*{Theorem B'}{Theorem B'}

\newtheorem{Definition}[Theorem]{Definition}
\newtheorem{Proposition}[Theorem]{Proposition}
\newtheorem{Lemma}[Theorem]{Lemma}

\newtheorem{Question}{Question}

\newtheorem{Remark}[Theorem]{Remark}

\newtheorem{Claim-numbered}[Theorem]{Claim}
\newtheorem{SubClaim-numbered}[Theorem]{Subclaim}

\numberwithin{equation}{section}

 \def\NN{{\mathbb N}}

\def\TT{{\mathbb T}}

\def\Diff{\hbox{Diff} }
\def\Gib{{\mathrm{Gibb}^u}}

   \def\cO{{\cal O}} \def\cU{{\cal U}}
    \def\cV{{\cal V}}
    \def\cW{{\cal W}}
\def\cF{{\cal F}}

\newcommand{\eqdef}{\stackrel{\scriptscriptstyle\rm def}{=}}

\newcommand{\diff}{{\operatorname{Diff}}}

\def\diff{\operatorname{Diff}}
\def\ph{\operatorname{PH}}

\def\dim{\operatorname{dim}}

\def\supp{\operatorname{supp}}
\def\ud{\operatorname{d}}
\def\e{{\varepsilon}}

\def\det{\operatorname{det}}

\begin{document}

\title{A Conservative  Partially Hyperbolic Dichotomy:
Hyperbolicity versus  Nonhyperbolic Measures}

\author{Lorenzo J. D\'iaz, Jiagang Yang and  Jinhua Zhang\footnote{
	  				We are deeply grateful to 
				Sylvain Crovisier, 
				Rafael Potrie, and Amie Wilkinson for their numerous valuable insights 
				into the properties of partially hyperbolic dynamics, especially emphasizing 
				the importance of the transverse condition. Their comments on \cite{AviCroWil:,CroPot:,AviCroEskPotWilZha:} were particularly relevant. 
				
				L. J.  D\'iaz is partially supported by
				CAPES -- Finance Code 001, 
				CNPq-grant  310069/2020-3, 
				CNPq Projeto Universal 
				404943/2023-3, 
				INCT-FAPERJ E-26/200866/2018, and CNE-FAPERJ E26/204046/2024.  
				J. Yang is partially supported by CAPES -- Finance Code 001,  CNPq, FAPERJ, PRONEX, NSFC  12271538, NSFC 11871487,  NSFC 12071202 and MATH-AmSud 220029.
				J. Zhang is partially supported by  National Key R$\&$D Program of China (2021YFA1001900),  NSFC 12471179 and the Fundamental Research Funds for the Central Universities.  
				 
		 The authors thank the School of Mathematical Sciences of
				Beihang University for the warm hospitality  and the support while preparing this paper.}}


\maketitle

\begin{abstract}
	In a conservative and partially hyperbolic  three-dimensional setting,
	we study three representative classes of diffeomorphisms:
	those  homotopic to Anosov (or Derived from Anosov  diffeomorphisms),
	diffeomorphisms in neighborhoods of the time-one map of the geodesic flow on a surface of negative curvature, 
	and accessible and dynamically coherent skew products with circle fibers.
	In any of these classes, we establish the following dichotomy: either the diffeomorphism is Anosov, or it possesses nonhyperbolic ergodic measures. Our approach is perturbation-free and combines recent advances in the study of stably ergodic diffeomorphisms with a variation of the periodic approximation method to obtain ergodic measures.
	
	A key result in our construction, independent of conservative hypotheses, is the construction of nonhyperbolic ergodic measures for sets with a minimal strong unstable foliation that satisfy the mostly expanding property. This approach enables us to obtain nonhyperbolic ergodic measures in other contexts, including some subclasses of the so-called anomalous partially hyperbolic diffeomorphisms that are not dynamically coherent.
\end{abstract}
%

	
{\small{\noindent{\bf Mathematics Subject Classification (2010):} 
		37D30, 
		37C40, 
		37C29,
		37D20, 
		37D25.
}}
\\
{\small{\bf Keywords:} accessibility, Anosov,  conservative,
	hyperbolic measures, Lyapunov exponent, mostly expanding, partial hyperbolicity.}

	\maketitle

	\tableofcontents

	\section{Introduction}

	\subsection{A question and our results}
	This paper addresses the following question:
	\begin{equation}\label{eqquestion}
		\text{How does ``nonhyperbolic behavior''  manifest in dynamical systems?}
	\end{equation}
	We explore this within a conservative and partially hyperbolic setting where the center bundle is one-dimensional.
	In this context, we can consider (whenever it exists) the Lyapunov exponent of points with respect to this direction, called {\em{center Lyapunov exponent.}} They provide a natural quantifier to investigate nonhyperbolicity.
	Note that the center bundle is one of the bundles of the Oseledets splitting for any ergodic measure.
	Hence, by Oseledets ergodic theorem \cite{Ose:68}, almost every point (with respect to some ergodic measure)
	has the same exponent.
	An ergodic measure is \emph{hyperbolic} if all its exponents are nonzero, otherwise, it is \emph{nonhyperbolic}.
	As in partially hyperbolic settings the exponents of ergodic  measure associated to bundles different from the center one  
	are either negative or positive, the absence of hyperbolicity of a measure is reflected by a zero Lyapunov exponent associated with the center bundle.

	We consider three  representative classes of
	partially hyperbolic  and conservative diffeomorphisms in dimension three:
	(a) homotopic to Anosov (or Derived from Anosov  diffeomorphisms),
	(b)  diffeomorphisms in a neighborhood of the time-one map of the geodesic flow on a surface of negative curvature, 
	and (c) accessible and dynamically coherent skew products with circle fibers.
	Although these three classes share some common properties, each class exhibits specific features that require a separate study. See Section~\ref{s.statements} for precise definitions and terminology.
	For a long time, these three classes were believed to represent the ``essential'' examples of conservative partially hyperbolic systems in dimension three. 
	However, new classes have since been introduced, with the so-called ``anomalous'' examples in \cite{Bonetal:20} being particularly relevant in our context (see the comments below and \cite[Section 1.2]{BonGovPot:16} for further discussion).
	For  every diffeomorphism in any of  these three classes,  we prove the following dichotomy: 
	\begin{itemize}
		\item
		either the diffeomorphism is Anosov, 
		\item
		or it has a nonhyperbolic measure,
	\end{itemize}
	as shown in Theorems~\ref{thm.DA-case}, ~\ref{thm.geodesic-flow-case}, and ~\ref{thm.nilmanifold-case}, which  deal with cases
	(a), (b), and (c) above, respectively. Note that in cases (b) and (c), only the second alternative of the dichotomy can occur.
	Note that the diffeomorphisms in these three theorems are dynamically coherent (see the next section for details). However, some of our methods do not rely on this condition. Having this in mind, we state 
	Theorem~\ref{t.transversecondition}
	(where no dynamical coherence is required)
	involving the so-called transversality condition introduced in
	\cite{AviCroWil:,CroPot:}. 
	This result also presents a dichotomy:  ``hyperbolicity versus  existence of nonhyperbolic measures''. 
	Though somewhat technical, it is presented in a way that makes it ready for practical use and holds potential for application in a variety of contexts, particularly in situations beyond coherence.
	
	The results above follow from the general (not necessarily conservative) Theorem~\ref{thm.existence}, which provides conditions, such as the minimality of the strong unstable foliation, the mostly expanding property, and the presence of saddles with different types of hyperbolicity, that imply a set supports nonhyperbolic measures.
	A consequence of this theorem is  Corollary~\ref{ctct.both-minimal}, which asserts that the stably ergodic and nondynamically coherent examples
	in \cite{Bonetal:20} have nonhyperbolic measures, see Remark~\ref{rem:bonetal}.
	Precise definitions and statements of our results can be found in Section~\ref{ss.statements}.
	
	\subsection{Context}
	Many problems in modern smooth Dynamical Systems Theory can be framed as variations of the question in
	\eqref{eqquestion}. In the 1960s, it was established that uniformly hyperbolic systems, such as Anosov diffeomorphisms and those satisfying Axiom A, are not dense within the space of dynamical systems, see \cite{AbrSma:68}. 
	Following this discovery, contributions from \cite{New:70,Shu:71,Man:78,Dia:95,BonDia:96} introduced different  mechanisms leading to persistent forms of nonhyperbolicity.
	These findings led to the need for a broader definition of hyperbolicity, resulting in  concepts like partial hyperbolicity and, notably, nonuniform hyperbolicity, as introduced in Pesin's theory, \cite{Pes:77}.
	These concepts are central to this paper. 
	Additionally, the need to explore dynamics beyond uniform hyperbolicity became evident. Although the existence of a nonhyperbolic measure is a sufficient condition for nonhyperbolicity, it is not a necessary one. There are numerous examples of diffeomorphisms where all ergodic measures are hyperbolic, yet the diffeomorphism itself is not, as illustrated in \cite{BalBonSch:99, CaoLuzRio:06,Diaetal:09}, among others. 
	The main results of this paper claim that for the three classes of diffeomorphisms mentioned above, the existence of  a nonhyperbolic measure is both a sufficient and a necessary condition.
	
	Several conjectures have been proposed to guide the study of dynamics beyond hyperbolicity. A well-known conjecture 
	by Palis \cite{Pal:00}  states the dichotomy ``hyperbolicity versus approximation by cycles''. This conjecture has been  reformulated in various ways, including the ``hyperbolicity versus robust cycles''  conjecture in \cite{Bon:11}  and the 
	``three alternatives''  proposed in \cite{Hay:14}, adding some ergodic elements. In \cite{DiaGor:09},  an ergodic local 
	version of Palis' conjecture is formulated: $C^1$-generically,
	a homoclinic class  (roughly an elementary piece of dynamics, see the discussion in \cite[Section 10.4]{BonDiaVia:05})
	is either uniformly hyperbolic or it supports a nonhyperbolic  measure. This conjecture was proved
	in \cite{Cheetal:19}.  However, without the genericity assumption, this conjecture does not hold true, as discussed above. 
	In this work, we address the conjecture from \cite{DiaGor:09} by focusing on the three classes of partially hyperbolic and conservative $C^{1+\alpha}$-diffeomorphisms above and establish a stronger version: a 
	diffeomorphism is either uniformly hyperbolic or supports a nonhyperbolic measure. 
	
	\subsection{Approach and ingredients}
	We now present the tools and techniques in this paper.
	Our approach combines a variation of the  {\em{GIKN-method of periodic approximations}} introduced in \cite{Goretal:05}  along with recent advances in the theory of three-dimensional partially hyperbolic and  conservative diffeomorphisms. 
	Additionally, we employ
	results developed to address a conjecture
	by Pugh and Shub \cite{PugShu:97} which claims that ``most diffeomorphisms among the set of conservative partially hyperbolic diffeomorphisms are stably ergodic''.
	These advances encompass a range of topics, including consequences of dynamical coherence, properties of strong stable and unstable foliations (such as accessibility), and  properties of measures of maximal entropy, among others. The types of results applied vary depending on the context and are detailed in Sections~\ref{ss.homotopic}, \ref{ss.strongacc}, and \ref{ss.maximalent}.
	General key underlying ingredients are the Gibbs $u$-states and  the mostly expanding property, see Section~\ref{ss.Gibbsmostly}.
	
	We introduce in Theorem~\ref{thm.existence} a general method 
	(where no conservative assumption is involved)
	for constructing nonhyperbolic measures built up on the ideas in \cite{Goretal:05}. 
	This result requires that there is a set which is saturated by the strong unstable foliation, has a periodic 
	point with negative center exponent, and has the mostly expanding property.
	This approach does not involve perturbations of the dynamics.
	We observe that
	Theorem~\ref{thm.existence} 
	can be also obtained in the $C^1$-topology considering systems satisfying the unstable entropy formula, see Remark~\ref{r.C1-u-Gibbs}. 
	
	The GIKN-method provides an ergodic measure with uncountable support, achieved as a suitable limit of ergodic measures supported on periodic orbits (called {\em{periodic measures}}). This method is particularly well-suited for partially hyperbolic settings where the center bundle is one-dimensional (this allows to recover the center Lyapunov exponent of a limit measure as the limit of the center Lyapunov exponents of the converging measures).
	In such settings, the strong stable and unstable foliations, which are always defined, play a fundamental role.
	
	The GIKN-method has been employed to construct nonhyperbolic measures in several contexts; see \cite{KleNal:07}  for some classes of diffeomorphisms and \cite{DiaGor:09, BonDiaGor:10, Cheetal:19,
		BonZha:19,WanZha:20} for nonhyperbolic homoclinic classes. 
	This method has since evolved invoking blender-horseshoes \cite{BocBonDia:16}, this approach allows to deal with a larger class of maps and provides sets supporting only nonhyperbolic measures. See also the developments in \cite{BonDiaBoc:18,BonDiaKwi:21}. 
	
	In a partially hyperbolic setting with a one-dimensional center bundle, all known methods 
	for obtaining nonhyperbolic measures
	typically involve a dynamical interaction between a  ``center contracting region'' and a  ``center expanding region''. In all  studied cases, the center contracting region is provided by a periodic point having a negative center Lyapunov exponent. The nature of the center expanding region varies by method: in \cite{Goretal:05}, it is a periodic point with a positive center Lyapunov exponent; in \cite{BocBonDia:16}, it is a center-expanding blender-horseshoe; and in this work, it is provided by the mostly expanding property. To ensure effective interaction between these regions, minimality-like properties of the 
	strong unstable foliations are used. In our approach, we work with sets that are saturated by the strong unstable leaves.
	
	Finally, we remark that our results consider $C^{1+\alpha}$-diffeomorphisms as this allows us to  use Pesin theory as well as $u$-Gibbs states and invoke the mostly expanding property.   
	
	This paper is organized as follows. In Section~\ref{s.statements}, 
	we introduce the terminology used
	throughout the paper and precisely state our main results.
	In Section~\ref{s.construction}, we present our method to construct nonhyperbolic
	ergodic measures, proving Theorem~\ref{thm.existence}. 
	In Section~\ref{s.hypversus}, employing Theorem~\ref{thm.existence}, we prove a semi-local version of our hyperbolicity dichotomy for 
	mostly expanding $\mathcal{F}^u$-minimal sets, see Proposition~\ref{p.intermediate}.
	In this section we also prove Theorem~\ref{t.transversecondition}.
	Sections~\ref{s.dicDA}, 
	~\ref{s.geodesicflow}, and ~\ref{s.skewcircle}
	are dedicated to the proofs of Theorems~\ref{thm.DA-case}, ~\ref{thm.geodesic-flow-case}, and ~\ref{thm.nilmanifold-case}, respectively.

	\section{Statements of the main results}
	\label{s.statements}
	
	\subsection{Preliminaries} \label{ss.preliminaries}
	We begin by introducing some notation and terminology.
	In what follows, $M$ denotes a   closed Riemannian manifold
	with a metric $\| \cdot\|$
	and 
	$\diff^r(M)$, $r\in \mathbb{N}$, denotes the space of $C^r$-diffeomorphisms endowed with the uniform topology.
	Given  $f\in\diff^1(M)$, an $f$-invariant compact set $\Lambda$  is \emph{partially hyperbolic} if there are a 
	$Df$-invariant continuous splitting $T_{\Lambda}M=E^s\oplus E^c\oplus E^u$ 
	with three nontrivial bundles
	and constants   $C>1$ and $\lambda\in(0,1)$ such that for every $x\in \Lambda$ and $n\in\mathbb{N}$, one has
	\begin{itemize}
		\item $\|Df^n|_{E^s(x)}\|<C\cdot\lambda^n \textrm{~~and~~}\|Df^{-n}|_{E^u(x)}\|<C\cdot\lambda^n;$
		\item $\|Df^n|_{E^s(x)}\|\cdot \|Df^{-n}|_{E^c(f^n(x))}\|<C\cdot\lambda^n\textrm{ and }\|Df^n|_{E^c(x)}\|\cdot \|Df^{-n}|_{E^u(f^n(x))}\|<C\cdot\lambda^n. $
	\end{itemize}
	Throughout this paper, we shall assume that $C=1$, this  can be obtained  by replacing the metric 
	by an equivalent one, see \cite{Gou:07}. 
	
	A diffeomorphism $f\in\diff^1(M)$ is \emph{partially hyperbolic} if the whole ambience $M$ is a partially hyperbolic set.  
	In this case, 
	it is classical that the distributions $E^s$ and $E^u$ are uniquely integrable to $f$-invariant foliations, called \emph{strong stable} and \emph{strong unstable foliations,} denoted by $\cF^s$ and $\cF^u$, respectively (see \cite{HirPugShu:77}). We denote by $\cF^*(x)$ the $\cF^*$-leaf through the point $x$, for $*=s,u.$  
	
	For $r \geq 1$, let $\ph^r (M)$ denote the set of $C^r$ partially hyperbolic diffeomorphisms on $M$, and let $\ph^r_{c=1}(M)$ represent the subset where $\dim(E^c) = 1$.
	It follows two key notions for partially hyperbolic diffeomorphisms.
	First, we say
	that $f\in \ph^r(M)$ is {\em{accessible}} if every pair of points in the manifold can be connected by a path consisting of finitely many arcs lying entirely in a strong stable or a strong unstable manifold.
	The accessible property is a $C^1$-open property and 
	$C^r$-dense in $\mathrm{PH}^r_{c=1}(M)$, where $r\ge 1$,  see \cite{Did:03} and \cite{Buretal:08}, respectively.
	See also the open and density results in \cite{DolWil:03} for
	$\mathrm{PH}^1(M)$.

	A diffeomorphism  $f\in \ph^r(M)$ is \emph{dynamically coherent}, if there exist $f$-invariant foliations $\mathcal{F}^{cs}$ and $\mathcal{F}^{cu}$ tangent to the bundles 
	$$
	E^{cs}  \eqdef E^s\oplus E^c \qquad \mbox{and} \qquad E^{cu}  \eqdef E^c\oplus E^u.
	$$ 
	In particular, taking the intersections of these two foliations, one gets a foliation $\mathcal{F}^c$ tangent to the center bundle $E^c$
	called the {\em{center foliation.}}

	Given $f\in \diff^1(M)$, we denote by
	$\mathfrak{M}_{\mathrm{inv}}(f)$ and
	$\mathfrak{M}_{\mathrm{erg}}(f)$ the  set of 
	invariant and
	ergodic measures of 
	$f$.
	Given any $f\in \ph^r_{c=1}(M)$ and  $\mu\in \mathfrak{M}_{\mathrm{erg}}(f)$, 
	note  that $E^c$ is always a direction of the Oseledets splitting of $\mu$.
	Note also that any other bundle of that splitting of is either contained in $E^s$ or $E^u$.
	
	The {\em{center Lyapunov exponent}} of a point $x\in M$ is defined by
	$$
	\lambda^c (x)  \eqdef \lim_{n\to \pm\infty} \frac{\log ||D_xf^n (v)||}{n}, \quad v\in E^c(x)\setminus \{\bar 0\},
	$$
	assuming that this limit exists.

	By the Birkhoff ergodic theorem, 
	given $f\in \ph^r_{c=1}(M)$,
	for every $\mu\in \mathfrak{M}_{\mathrm{erg}}(f)$  the \emph{center Lyapunov exponent} of $\mu$ is given by 
	\[
	\lambda^c(\mu) \eqdef\int\log\|Df|_{E^c}\|\ud\mu 
	\]
	and this number satisfies
	$$
	\lambda^c(\mu) = \lambda^c(x) \qquad \,\, \mu \,\, \mathrm{a.e.}\, x.
	$$
	For $\nu\in \mathfrak{M}_{\mathrm{inv}}(f)$, its {\em{averaged center Lyapunov exponent}} is defined  by  
	\begin{equation}
		\label{e.averageexp}
		\lambda^c(\nu) \eqdef\int\log\|Df|_{E^c}\|\ud\nu.
	\end{equation}

	Observe that Lyapunov exponents corresponding to the directions of the Oseledets splitting contained in $E^s$ 
	(respectively, $E^u$) are always negative (respectively, positive). In particular, the only Lyapunov
	exponent that may  be zero is the center one.
	
	\begin{Remark}
		\label{r.convergenceofexp}
		{\em{As the bundles of a partially hyperbolic depend continuously on the point
				(see, for instance, \cite[Chapter B1]{BonDiaVia:05}, the definition above implies that if $f\in \ph^r_{c=1}(M)$ and 
				$(\mu_n)_n$ is a sequence of measures in $\mathfrak{M}_{\mathrm{inv}}(f)$ with
				$\mu_n\to \mu$ in the $weak^\ast$-topology then
				$\lambda^c(\mu_n) \to \lambda^c(\mu)$.}}
	\end{Remark}

	A measure $\mu\in \mathfrak{M}_{\mathrm{erg}}(f)$ is  \emph{nonhyperbolic} if $\lambda^c(\mu)=0.$
	Otherwise the measure is called {\em{hyperbolic.}} We apply this terminology only to ergodic measures.

	\subsection{Main results} \label{ss.statements}
	To state our results, let
	$\diff_{\mathrm{Leb}}^r(M)$ be  the set of $C^r$-diffeomorphisms preserving a smooth volume $\mathrm{Leb}$
	and define
	$$
	\ph_{ \mathrm{Leb}, c=1}^r(M) \eqdef  \diff_{ \mathrm{Leb}}^r(M) \cap 
	\ph^r_{c=1}(M).
	$$

	We denote by $\mathbb{T}^3$ the three-torus.  Recall that a homeomorphism $f$ on $\mathbb{T}^3$ is \emph{homotopic to Anosov}  
	if the induced automorphism 
	$f_*:H_1(\mathbb{T}^3,\mathbb{Z})\to H_1(\mathbb{T}^3,\mathbb{Z})$, viewed as an element in $GL(3,\mathbb{Z})$, has no eigenvalues on the unit circle.

	\begin{theoremalph}\label{thm.DA-case}
		For every  $f\in\ph_{ \mathrm{Leb}, c=1}^{1+\alpha}(\mathbb{T}^3)$   homotopic to a linear  Anosov diffeomorphism
		the following dichotomy holds:
		\begin{itemize}
			\item either $f$ is Anosov;
			\item or $f$ has some nonhyperbolic ergodic measures.
		\end{itemize} 
	\end{theoremalph}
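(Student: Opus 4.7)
The plan is to apply Theorem~\ref{thm.existence}, which from a compact $\cF^u$-saturated set carrying the mostly expanding property and a periodic orbit with negative center Lyapunov exponent produces a nonhyperbolic ergodic measure. Assuming throughout that $f$ is not Anosov, I will realize these three ingredients on the whole of $\TT^3$.

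By the classification of partially hyperbolic diffeomorphisms on $\TT^3$ homotopic to a linear Anosov (Hammerlindl--Potrie, Potrie, Ures), $f$ is dynamically coherent and $\cF^u$ is minimal, so $\Lambda=\TT^3$ is an $\cF^u$-saturated compact set. Accessibility holds in this class (Didier; Burns--Hertz--Hertz--Talitskaya--Ures), and hence by the Avila--Crovisier--Wilkinson stable ergodicity results the Lebesgue measure $\Leb$ is ergodic; being a conservative partially hyperbolic measure whose conditionals on strong unstable leaves are absolutely continuous, it is automatically a $u$-Gibbs state. I then split according to the sign of $\lambda^c(\Leb)$. If $\lambda^c(\Leb)=0$, then $\Leb$ is the desired nonhyperbolic ergodic measure. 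If $\lambda^c(\Leb)>0$, then in the accessible stably ergodic setting every ergodic $u$-Gibbs state coincides with $\Leb$ and so has positive center exponent, which realizes the mostly expanding property on $\TT^3$. The case $\lambda^c(\Leb)<0$ is symmetric by passing to $f^{-1}$, which remains a conservative $C^{1+\alpha}$ partially hyperbolic diffeomorphism on $\TT^3$ homotopic to Anosov and still has minimal strong unstable foliation (namely the former strong stable foliation of $f$).

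To complete the hypotheses of Theorem~\ref{thm.existence} in the mostly expanding case, I would produce a periodic orbit with negative center Lyapunov exponent. The non-Anosov assumption is essential here: if every periodic orbit had center exponent uniformly bounded below by a positive constant, then combined with continuity of the partially hyperbolic splitting and an Anosov closing-type argument, $E^c$ would itself be uniformly expanding and $f$ would be Anosov, a contradiction. Hence some periodic orbit has center exponent $\leq 0$; if this exponent is zero the corresponding periodic measure is already nonhyperbolic, and otherwise it is strictly negative and feeds directly into Theorem~\ref{thm.existence}, producing the desired nonhyperbolic ergodic measure.

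The main obstacle I foresee is the propagation of the positive center Lyapunov exponent from $\Leb$ to \emph{every} ergodic $u$-Gibbs state, i.e.\ the verification of the mostly expanding property on $\TT^3$; this requires fine control over the set of $u$-Gibbs states beyond $\Leb$ and is the place where the conservative, accessible, and stably ergodic nature of the class must be combined most carefully. The remaining steps---minimality of $\cF^u$, ergodicity of $\Leb$, and the existence of a periodic orbit with nonpositive center exponent whenever $f$ is not Anosov---are comparatively routine consequences of the cited classification results and closing arguments.
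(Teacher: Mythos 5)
The central flaw is the unjustified assumption that $\cF^u$ is minimal, so that $\Lambda = \TT^3$ is $\cF^u$-minimal. For partially hyperbolic diffeomorphisms on $\TT^3$ homotopic to Anosov, the strong unstable foliation need \emph{not} be minimal; Theorem~\ref{th.dynamically-coherent-DA} only provides a \emph{unique $\cF^u$-minimal invariant set} $\Lambda$, which may be a proper subset of $\TT^3$ (as the paper points out, even for Ma\~n\'e's DA examples one only knows minimality of one of the two strong foliations). This removes the foundation of your plan. Once you are forced to work with a proper subset $\Lambda$, the final step becomes the crux of the whole theorem: Proposition~\ref{p.intermediate} tells you that either $\Lambda$ carries a nonhyperbolic measure or $\Lambda$ is uniformly hyperbolic with expanding center, but the latter alone does not contradict ``$f$ not Anosov'', because $\Lambda$ could be a small hyperbolic set. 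The paper spends essentially all of Section~\ref{s.dicDA} closing exactly this gap: it uses the semi-conjugacy to the linear model together with measures of maximal entropy (Theorem~\ref{thm.semi-conjugacy-preservinsing-center-foliation}, Claim~\ref{cl.linearpartofA}) to pin down the sign of the central eigenvalue of $A$, then invokes the $\cF^u$-transversality machinery (Theorem~\ref{t.acepwz} and Theorem~\ref{t.transversecondition}) to force a $cu$-disk inside $\Lambda$ whose orbit is dense, and only then concludes $\Lambda = \TT^3$ and $f$ Anosov.

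Two further steps in your write-up are also not sound as stated. First, the claim that every ergodic $u$-Gibbs state coincides with $\Leb$ is not true in general in the accessible conservative setting, and you correctly flag this as the ``main obstacle''; the paper bypasses it entirely by citing Yang's Theorem~D (Theorem~\ref{thm.mostly-expanding-criterion}), which gives mostly expanding from accessibility plus $\lambda^c(\Leb)>0$ without any uniqueness claim. Second, your closing-lemma argument for producing a periodic orbit with nonpositive center exponent does not go through: all periodic center exponents could be positive yet not uniformly bounded away from zero, so you cannot directly conclude uniform center expansion from the negation, and even if such a periodic orbit existed you would still have to argue it lies inside the $\cF^u$-minimal set $\Lambda$, which is what Theorem~\ref{thm.existence} actually requires. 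Finally, on accessibility: the relevant fact is not density of accessibility but the dichotomy in Theorem~\ref{thm.ergodicity-of-DA}, namely that a non-accessible such $f$ is already Anosov, which is how the paper reduces to the accessible case.
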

	Note that there exist conservative DA systems on $\mathbb{T}^3$ that are not Anosov, see for instance \cite{PonTah:14}, thus both alternatives in the dichotomy
	may occur.
	
	\begin{theoremalph}\label{thm.geodesic-flow-case}
		Let $(\phi_t)_{t\in\mathbb{R}}$ be the geodesic flow on a negatively curved surface $S$.    Then there exists a $C^1$-neighborhood $\cU\subset \diff^1_{\mathrm{Leb}}(T^1S)$ of $\phi_1$ such that every $C^{1+\alpha}$-diffeomorphism $f\in\cU$ has 
		a nonhyperbolic ergodic  measure.
	\end{theoremalph}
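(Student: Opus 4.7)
The plan is to reduce Theorem~B to a direct application of Theorem~\ref{thm.existence} on each diffeomorphism $f\in\cU$, with a separate argument for the degenerate case $\lambda^c(\mathrm{Leb})=0$. A preliminary observation is that the first alternative of the hyperbolicity dichotomy is vacuous here: the unit tangent bundle $T^1S$ of a closed negatively curved surface does not admit any Anosov diffeomorphism (its fundamental group is not of the required polycyclic type), so only the second alternative of the general dichotomy can occur.

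First, I would choose the neighborhood $\cU\subset\diff^1_{\mathrm{Leb}}(T^1S)$ so that each $f\in\cU$ inherits the following properties from $\phi_1$: (i) partial hyperbolicity with one-dimensional center, from openness of the partially hyperbolic condition; (ii) stable accessibility, by the results of Burns-Wilkinson on perturbations of time-one maps of geodesic flows; (iii) stable ergodicity of Lebesgue, via accessibility together with the (dynamical) center bunching enjoyed by $\phi_1$; and (iv) minimality of both strong foliations $\cF^s$ and $\cF^u$, inherited from the minimality of the horocycle foliations on $T^1S$ and persisting in a $C^1$-neighborhood by the arguments in \cite{AviCroWil:,CroPot:} (the transversality condition of Theorem~\ref{t.transversecondition} is available here).

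Once $\cU$ is fixed, I split the argument according to the sign of the averaged center exponent of $\mathrm{Leb}$. If $\lambda^c(\mathrm{Leb})=0$, then by stable ergodicity $\mathrm{Leb}$ itself is the desired nonhyperbolic ergodic measure; in particular this covers $\phi_1$. If $\lambda^c(\mathrm{Leb})>0$, I would check the hypotheses of Theorem~\ref{thm.existence} on all of $T^1S$: $\cF^u$-minimality has already been secured, and the mostly expanding property reduces to showing that every $u$-Gibbs state of $f$ has positive center exponent. By accessibility, conservativity and $C^{1+\alpha}$-regularity, $\mathrm{Leb}$ is the unique ergodic $u$-Gibbs state, so uniqueness together with $\lambda^c(\mathrm{Leb})>0$ yields the mostly expanding property. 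It then remains to produce a hyperbolic periodic point of stable index $2$ (i.e.\ with $\lambda^c<0$); once this is in hand, Theorem~\ref{thm.existence} produces the nonhyperbolic ergodic measure. The case $\lambda^c(\mathrm{Leb})<0$ is handled symmetrically by applying the same argument to $f^{-1}$, which swaps the roles of $\cF^s$ and $\cF^u$ and turns mostly contracting into mostly expanding.

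The main obstacle I anticipate is the construction of the periodic orbit with negative center Lyapunov exponent in the case $\lambda^c(\mathrm{Leb})>0$. All closed geodesics lift to periodic orbits of $\phi_1$ with center exponent exactly zero, so the sign is not supplied by structural stability, and the volume constraint $\lambda^s+\lambda^c+\lambda^u=0$ along periodic orbits does not determine the sign of $\lambda^c$. I would extract such a point from the coexistence of periodic orbits with center exponents on both sides of zero: the assumption $\lambda^c(\mathrm{Leb})>0$ together with the failure of uniform hyperbolicity (in fact, the absence of Anosov on $T^1S$) forces, via the periodic ergodic closing type arguments developed in the conservative partially hyperbolic literature cited in Section~\ref{ss.homotopic}, some periodic orbit to have strictly negative center exponent. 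Feeding this saddle, together with $\cF^u$-minimality and the mostly expanding property, into Theorem~\ref{thm.existence} completes the proof.
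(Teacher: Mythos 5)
Your overall skeleton (split on the sign of $\lambda^c(\mathrm{Leb})$, pass to $f^{-1}$ when negative, invoke Theorem~\ref{thm.mostly-expanding-criterion} to get mostly expanding, then feed a $\cF^u$-minimal set into the general machinery) is the same as the paper's, but there are two places where the proposal diverges and one of them is a genuine gap.

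The first divergence is not fatal but is not what the paper does: you assert that minimality of $\cF^u$ on the \emph{whole} manifold persists $C^1$-robustly near $\phi_1$, citing \cite{AviCroWil:,CroPot:}. The paper is more careful: it only establishes $usu$- and $sus$-accessibility on a $C^1$-neighborhood (Theorem~\ref{thm.both-usu-accessible-of-geodesic-flow}) and then invokes Theorem~\ref{thm.unique-u-minimal} to obtain a \emph{unique} $\cF^u$-minimal set $\Lambda$, which a priori may be a proper subset of $T^1S$. Robust global minimality of the strong foliations near $\phi_1$ is not a standard fact and the paper deliberately avoids assuming it. If you did have it, you could short-circuit the whole argument with Corollary~\ref{ctct.both-minimal} plus the nonexistence of Anosov on $T^1S$; the fact that the paper instead works with the $\cF^u$-minimal set $\Lambda$ and a delicate geometric lemma is a strong indication that global minimality is not available.

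The genuine gap is the construction of the periodic point with $\lambda^c<0$. You correctly identify this as the crux, but the argument you offer — that $\lambda^c(\mathrm{Leb})>0$ together with the nonexistence of Anosov diffeomorphisms on $T^1S$ ``forces, via periodic ergodic closing type arguments, some periodic orbit to have strictly negative center exponent'' — does not hold. The absence of Anosov on $T^1S$ says nothing about the dynamics restricted to $\Lambda$: it is perfectly consistent for $\Lambda$ to be a proper, uniformly hyperbolic subset with uniformly expanding center, in which case every ergodic measure on $\Lambda$ (and every periodic orbit in $\Lambda$) would have positive center exponent and no saddle of stable index $2$ would exist there. The paper's route is to apply the dichotomy of Proposition~\ref{p.intermediate} to $\Lambda$, reducing to ruling out exactly this uniformly hyperbolic alternative, and the real content is then Claim~\ref{l.nothappen}: the center over $\Lambda$ cannot be uniformly expanding. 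That claim is where the specific geometry of the geodesic flow enters — the non-joint-integrability of $E^s\oplus E^u$, the $usu$-accessibility of $f$, and, crucially, the Crovisier--Poletti bound (Proposition~\ref{p.finite-pliss}) on the number of Pliss points in a bounded center segment. None of these ingredients appear in your proposal, and the periodic closing lemmas you gesture at cannot replace them: such lemmas produce periodic orbits shadowing orbits whose exponent sign is already known, so they would need to be fed an orbit or a measure with $\lambda^c\le 0$ on $\Lambda$ — which is precisely what is in question.

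A secondary inaccuracy: your statement that ``$\mathrm{Leb}$ is the unique ergodic $u$-Gibbs state'' when $\lambda^c(\mathrm{Leb})>0$ is unjustified; that uniqueness argument (as used in Claim~\ref{cl.onlyone} of the paper) requires \emph{negative} center exponent so that Pesin stable manifolds dominate. This is harmless since you do not really need it — Theorem~\ref{thm.mostly-expanding-criterion} already gives the mostly expanding property directly from accessibility and $\lambda^c(\mathrm{Leb})>0$ — but it should be excised.
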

	\begin{Remark}
		{\rm Observe that $\mathbb{T}^3$ is the only three-manifold 
			supporting Anosov diffeomorphisms \cite{New:70b}, and the unit tangent bundle of a negatively curved surface is different from $\mathbb{T}^3$. Thus in this case, we only obtain nonhyperbolic ergodic measures. }
	\end{Remark}

	Finally,  we consider skew-products on a space which is a circle bundle and the  base space is a topological torus.
	Note that	Anosov diffeomorphisms does not have compact center leaves, thus the diffeomorphisms in Theorem~\ref{thm.nilmanifold-case} below can not be Anosov. The relevant point is that they have nonhyperbolic ergodic measures.
	
	Note that in the next theorem the ambience manifold may have any dimension greater than three.
	
	\begin{theoremalph}\label{thm.nilmanifold-case}
		Let 
		$M$ be a compact manifold without boundary with $\dim (M) \ge 3$ and
		$f\in\ph^{2}_{ \mathrm{Leb}, c=1}(M))$ be accessible and dynamically coherent. Assume that  the center foliation $\cF^c$ forms a circle bundle and the quotient space $M^c = M/\cF^c$ is a torus. Then $f$ has  a nonhyperbolic  ergodic  measure
	\end{theoremalph}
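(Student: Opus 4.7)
The plan is to exploit the smooth circle-bundle structure of $\cF^c$ together with the volume preservation of $f$ to show that $\lambda^c(\Leb)=0$, and then to extract from this an ergodic measure with zero center exponent, either directly from the ergodic decomposition of $\Leb$ or via the periodic-approximation machinery of Theorem~\ref{thm.existence}.

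\emph{Vanishing of the mean center exponent.} The hypothesis that $\cF^c$ forms a circle bundle with torus quotient $M^c=M/\cF^c$ yields a well-defined quotient map $\bar f:M^c\to M^c$ preserving the smooth push-forward measure $\bar{\Leb}\eqdef\pi_*\Leb$; in particular $\log|\det D\bar f|$ is a smooth coboundary on $M^c$ and $\int\log|\det D\bar f|\,d\bar{\Leb}=0$. Combining this with $\log|\det Df|\equiv 0$ (volume preservation) and the block decomposition
\[
\log|\det Df|(p)=\log\|Df|_{E^c(p)}\|+\log|\det D\bar f|(\pi p)
\]
arising from the $Df$-invariance of $E^c=T\cF^c$, one obtains
\[
\lambda^c(\Leb)=\int\log\|Df|_{E^c}\|\,d\Leb=0.
\]

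\emph{Producing an ergodic component with zero center exponent.} Ergodically decompose $\Leb=\int\mu_x\,d\Leb(x)$. If some $\mu_{x_0}$ satisfies $\lambda^c(\mu_{x_0})=0$, it is already the desired nonhyperbolic ergodic measure. Otherwise $\int\lambda^c(\mu_x)\,d\Leb(x)=0$ combined with $\lambda^c(\mu_x)\neq 0$ almost everywhere forces ergodic components of strictly positive and strictly negative center exponent to coexist on sets of positive Lebesgue measure. Pesin theory for $C^{1+\alpha}$-conservative maps together with Katok's closing lemma, applied to a negative-exponent component, yields a hyperbolic $f$-periodic point $p$ with $\lambda^c(p)<0$. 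By the results recalled in Section~\ref{ss.strongacc}, accessibility and conservativity imply that $\cF^u$ is minimal on $M$, so $M$ itself is an $\cF^u$-saturated, $\cF^u$-minimal set containing $p$, and Theorem~\ref{thm.existence} applied to $X=M$ then produces a nonhyperbolic ergodic measure.

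The main obstacle in this second case is verifying the mostly expanding hypothesis of Theorem~\ref{thm.existence}, since $\Leb$ itself is a $u$-Gibbs state and is assumed to have ergodic components of negative center exponent. A cleaner route that sidesteps this difficulty is to invoke stable ergodicity results for accessible conservative $C^{1+\alpha}$ partially hyperbolic diffeomorphisms with one-dimensional center (Burns--Wilkinson-type theorems and their refinements, discussed in Section~\ref{ss.strongacc}) to conclude that $\Leb$ is itself ergodic; the mixed-sign case then never arises and $\Leb$ itself is the sought-after nonhyperbolic ergodic measure.
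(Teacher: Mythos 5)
Your argument has a fatal gap at the very first step: the claim that $\lambda^c(\mathrm{Leb})=0$ is false for the class of diffeomorphisms in the theorem. The block decomposition you write,
\[
\log|\det Df|(p)=\log\|Df|_{E^c(p)}\|+\log|\det D\bar f|(\pi p),
\]
and the conclusion $\int\log|\det D\bar f|\,d\bar\Leb=0$ both rely on the quotient structure being smooth: you need $\pi\colon M\to M^c$ to be a smooth submersion, $\bar f$ to be a diffeomorphism, and $\pi_*\mathrm{Leb}$ to be equivalent to a smooth volume with a log-integrable density. None of these is provided by the hypotheses. The center foliation $\cF^c$ is tangent to $E^c$, which is in general only Hölder, so $\cF^c$ is only a topological circle bundle: $\bar f$ need not be differentiable and the disintegration of $\mathrm{Leb}$ along $\cF^c$ need not be absolutely continuous (in fact, when $\lambda^c(\mathrm{Leb})\ne 0$, Ruelle--Wilkinson type results say it is atomic). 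Moreover, the conclusion $\lambda^c(\mathrm{Leb})=0$ is simply untrue: Shub--Wilkinson type conservative perturbations of $A\times\mathrm{id}$ on $\TT^2\times S^1$ are accessible, dynamically coherent, have a center circle-bundle foliation over a torus, and satisfy $\lambda^c(\mathrm{Leb})>0$. Your argument (even in its ``cleaner route'' form) terminates at this point, since once you know $\mathrm{Leb}$ is ergodic, a nonzero $\lambda^c(\mathrm{Leb})$ leaves you with a \emph{hyperbolic} ergodic measure and no candidate for the negative-exponent periodic point that the rest of your construction needs.

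The paper's proof lives entirely in the case you did not address. It first applies Theorem~\ref{thm.skew-product-mme} (Ures--Viana--Yang) to split the analysis: either the unique measure of maximal entropy is already nonhyperbolic, or there exist ergodic MMEs of both signs of center exponent, whose supports have finitely many connected components that are $\cF^s$-minimal (for the positive ones) and $\cF^u$-minimal (for the negative ones), each containing periodic orbits with the corresponding sign of center exponent. In the latter case, accessibility gives ergodicity of $\mathrm{Leb}$ (Theorem~\ref{t.thenitisergodic}); if $\lambda^c(\mathrm{Leb})\ne 0$, one may assume it is positive after replacing $f$ by a suitable power/inverse, so Yang's criterion (Theorem~\ref{thm.mostly-expanding-criterion}) makes $f^k$ mostly expanding, and Theorem~\ref{thm.existence} applied to the $f^k$-invariant $\cF^u$-minimal set $\Lambda^u$ (which carries a periodic point with negative center exponent) produces the nonhyperbolic ergodic measure. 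This is genuinely different machinery: the negative-exponent periodic point is located by the MME structure theory, not by the ergodic decomposition of $\mathrm{Leb}$, and the ``mostly expanding'' hypothesis is verified by Yang's theorem precisely because $\mathrm{Leb}$ is ergodic with positive center exponent, contrary to your worry that $\Leb$ having negative-exponent components obstructs it. Finally, your assertion that accessibility plus conservativity makes $\cF^u$ minimal on all of $M$ is not supported by Section~\ref{ss.strongacc} (usu-accessibility gives a \emph{unique} $\cF^u$-minimal set, not minimality of $\cF^u$), and is not true in general.
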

	
	When $M$ is a three-dimensional nilmanifold other than $\mathbb{T}^3$, it is known that every partially hyperbolic diffeomorphism on $M$ is both accessible and dynamically coherent whose  center foliation forms a circle bundle,
	see \cite{RodRodUre:08, HamPot:14}. 
	This leads to the following corollary of Theorem~\ref{thm.nilmanifold-case}.
	
	\begin{Coro}\label{cor.nilmanifold-case}
		Let $M$ be a three-dimensional nilmanifold other than $\mathbb{T}^3$. 
		Then every diffeomorphism $f\in\ph^{2}_{ \mathrm{Leb},c=1}(M)$ has a nonhyperbolic ergodic measure.  \end{Coro}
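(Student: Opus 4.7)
The plan is to derive Corollary~\ref{cor.nilmanifold-case} as a direct consequence of Theorem~\ref{thm.nilmanifold-case}, by verifying its hypotheses for any $f\in\ph^{2}_{\mathrm{Leb},c=1}(M)$ when $M$ is a three-dimensional nilmanifold distinct from $\mathbb{T}^3$. The dimension condition $\dim(M)\ge 3$ holds trivially, and $f$ is by assumption $C^2$, volume-preserving, and partially hyperbolic with one-dimensional center. Therefore, the conditions that remain are accessibility, dynamical coherence, the statement that $\cF^c$ forms a circle bundle, and the identification of the quotient $M^c=M/\cF^c$ with a torus.

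For the first three ingredients, I would invoke the classification results of Rodriguez-Hertz--Rodriguez-Hertz--Ures \cite{RodRodUre:08} and Hammerlindl--Potrie \cite{HamPot:14}. Together they establish that every partially hyperbolic diffeomorphism on a non-toral three-dimensional nilmanifold is both accessible and dynamically coherent, and that moreover its center foliation $\cF^c$ consists entirely of compact leaves forming a $C^0$ fiber bundle structure with circle fibers. This is precisely what is indicated in the discussion preceding the corollary, so no further work is required here beyond quoting these two papers.

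To identify $M^c=M/\cF^c$ with a torus, I would use that any three-dimensional nilmanifold other than $\mathbb{T}^3$ is diffeomorphic to a quotient $\mathrm{Heis}(\mathbb{R})/\Gamma$ of the Heisenberg group by a uniform lattice, and that in the classification cited above the center foliation is leafwise conjugate to the foliation by orbits of the one-parameter center subgroup $Z\subset \mathrm{Heis}(\mathbb{R})$. Since $\mathrm{Heis}(\mathbb{R})/Z\cong\mathbb{R}^2$ and the image of $\Gamma$ in this quotient is a rank-two lattice, $M^c$ is homeomorphic to $\mathbb{T}^2$. Once all four hypotheses are verified, Theorem~\ref{thm.nilmanifold-case} delivers the nonhyperbolic ergodic measure. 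The only mildly delicate point I foresee is confirming that the $f$-invariant center foliation produced by the classification matches the algebraic $Z$-foliation well enough to ensure that the quotient is genuinely a torus rather than an exotic leaf space; but since only the homeomorphism type of $M^c$ is required by Theorem~\ref{thm.nilmanifold-case}, this reduces to the topological statement that a circle bundle whose total space is a non-toral three-dimensional nilmanifold necessarily has $\mathbb{T}^2$ as its base, which is a consequence of the $\pi_1$-structure of such nilmanifolds.
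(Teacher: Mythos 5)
Your proposal is correct and takes essentially the same route as the paper: the corollary is obtained from Theorem~\ref{thm.nilmanifold-case} by quoting \cite{RodRodUre:08} for accessibility and \cite{HamPot:14} for dynamical coherence and the circle-bundle structure of $\cF^c$, with the identification $M/\cF^c\cong\mathbb{T}^2$ forced by the nilmanifold topology. The paper states this in a single sentence preceding the corollary without spelling out the torus quotient; your extra paragraph on the Heisenberg picture simply makes that implicit point explicit.
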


	To prove these results, we give a criterion (Theorem ~\ref{thm.existence}) for the existence of nonhyperbolic ergodic measures using the mostly expanding property that we proceed to describe. 
	
	\subsection{A general criterion for nonhyperbolic measures}
	\label{ss.criterion}
	
	Throughout this section, 
	$M$ is a compact boundaryless manifold with $\dim (M) \ge 3$.
	Given $f\in \ph^{1+\alpha} (M)$,
	a measure $\mu \in \mathfrak{M}_{\mathrm{inv}}(f)$  is a \emph{$u$-Gibbs state} if its conditional measures along the strong unstable manifolds are absolutely continuous with respect to the Lebesgue measure. 
	We denote  by $\Gib (f)$ the set of $u$-Gibbs states of $f$.
	The existence of $u$-Gibbs states  was firstly obtained by Pesin and Sinai \cite{PesSin:82}.

	A  diffeomorphism $f\in\ph^{1+\alpha}(M)$ is \emph{mostly expanding} if all center Lyapunov exponents of  every 
	$\mu \in \Gib (f)$  are positive \cite{AndVas:18,Yan:21}. One can analogously define the mostly expanding property for strong unstable laminated sets. 
	Let us observe that this notion
	can be generalized to the $C^1$-setting via the unstable entropy formula, see for instance \cite{Led:84,Yan:21,CroYanZha:20,HuaYanYan:20}. 
	This will allow to reformulate the theorem below in the 
	$C^1$-setting, see Remark~\ref{r.C1-u-Gibbs}. 
	
	We will review relevant properties of $u$-Gibbs states and mostly expanding diffeomorphisms in 
	Section~\ref{ss.Gibbsmostly}.

	Given a lamination $\mathcal{F}$,   
	a set $\Lambda$ is called \emph{$\cF$-saturated} if $\cF(x)\subset \Lambda$ for every $x\in\Lambda$. And $\Lambda$ is called \emph{$\mathcal{F}$-minimal} if  $\overline{\mathcal{F}(x)}=\Lambda$ for every 
	$x\in\Lambda$. In other words,  $\Lambda$ is the smallest closed $\mathcal{F}$-saturated set.  
	A foliation $\mathcal{F}$ is \emph{minimal} if  $\overline{\mathcal{F}(x)}=M$ for every $x$.

	\begin{theoremalph}\label{thm.existence}
		Let 
		$M$ be a compact boundaryless manifold with $\dim (M) \ge 3$,
		$f\in\diff^{1+\alpha}(M)$, and $\Lambda$ be an  $f$-invariant compact set. Assume  that 
		\begin{itemize}
			\item  $\Lambda$ is partially hyperbolic  with $\dim (E^c)=1$;  		
			\item $\Lambda$ is $\mathcal{F}^u$-minimal;
			\item   $\Lambda$ is mostly expanding;  and
			\item  $\Lambda$ contains  a periodic point with negative center Lyapunov exponent.
		\end{itemize}
		Then $f$ has a nonhyperbolic ergodic measure whose support is the whole $\Lambda$. 
	\end{theoremalph}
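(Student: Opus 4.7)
My plan is to adapt the classical GIKN periodic-approximation scheme, with two modifications specific to this setting: the center-expanding ingredient is provided statistically by the mostly expanding property rather than by a saddle with positive center exponent, and the bridge between contracting and expanding regions is supplied by $\mathcal{F}^u$-minimality rather than by a heteroclinic connection. The output will be a sequence of periodic measures $\mu_n$ whose $weak^\ast$ limit $\mu$ is ergodic, supported on all of $\Lambda$, and satisfies $\lambda^c(\mu)=0$ by Remark~\ref{r.convergenceofexp}.

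First, I would extract expanding orbit segments from the mostly expanding hypothesis. Since every ergodic component of a $u$-Gibbs state carried by $\Lambda$ has positive center Lyapunov exponent, and since by Pesin--Sinai the Cesàro averages of iterates of normalized Lebesgue on any unstable disk accumulate on $u$-Gibbs states, for any disk $D\subset\mathcal{F}^u\cap\Lambda$ a definite fraction of points in $D$ have Birkhoff sums of $\log\|Df|_{E^c}\|$ growing at a uniform positive rate. Pliss' lemma then yields, for arbitrarily large $N$, points $z\in D$ with $\prod_{j=0}^{n-1}\|Df|_{E^c(f^j z)}\|\ge e^{\lambda_0 n}$ for all $0\le n\le N$, where $\lambda_0>0$ depends only on the mostly expanding rate. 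Using $\mathcal{F}^u$-minimality, the leaf $\mathcal{F}^u(p)$ is dense in $\Lambda$, so strong unstable holonomy transports such expanding Pliss points into any prescribed neighborhood of the periodic point $p$; the $u$-holonomy is essentially transparent to the center exponent because the bundles split continuously and the $C=1$ convention provides the needed bunching.

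Second, I would run the GIKN induction. At stage $n$, I concatenate a segment shadowing $\orb(p)$ for time $T_n$ (averaging $\lambda^c(p)<0$), a Pliss-expanding segment of length $S_n$ (averaging at least $\lambda_0$), and short $\mathcal{F}^u$-transitions between them. The ratios $T_n/(T_n+S_n)$ are chosen so that the cumulative center average is of order $1/n$. A standard shadowing argument, available since $E^s$ and $E^u$ are uniformly hyperbolic transverse to the one-dimensional center, produces a genuine periodic orbit $\mathcal{O}_n$ tracking this pseudo-orbit. The GIKN lemma, enforced by appropriate smallness of the tracking error and by geometric growth of the periods, guarantees that the periodic measures $\mu_n$ converge in $weak^\ast$ to an ergodic measure $\mu$ whose support coincides with the Hausdorff limit of $\mathcal{O}_n$; continuity of the center exponent (Remark~\ref{r.convergenceofexp}) gives $\lambda^c(\mu)=0$.

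To force $\supp\mu=\Lambda$, at each stage I would spread the expanding segment so that $\mathcal{O}_n$ becomes $\varepsilon_n$-dense in $\Lambda$ with $\varepsilon_n\to 0$; this is possible because Pliss-expanding points abound on every unstable disk and every unstable leaf is dense. The main obstacle I anticipate is the simultaneous enforcement of three constraints in the inductive step: the GIKN quantitative conditions for ergodicity of the limit, the convergence of the center exponent to zero, and the increasing density of the orbits in $\Lambda$. Without a positive-exponent saddle to provide a clean expanding building block, I must rely on the statistical abundance of hyperbolic-time points on unstable disks; the delicate point is packaging this statistical information into concrete orbit segments of prescribed length and spatial distribution, compatibly with the GIKN geometric schedule. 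This is exactly where the interplay between $u$-Gibbs theory, the mostly expanding property, and the $\mathcal{F}^u$-minimality of $\Lambda$ becomes essential.
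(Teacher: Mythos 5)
Your overall architecture matches the paper's: a GIKN-type inductive construction of periodic measures whose center exponents tend to $0$, with the expanding ingredient furnished statistically by the mostly expanding property (via $u$-Gibbs states on unstable disks) and the bridge supplied by $\mathcal{F}^u$-minimality. The density of the orbits forcing $\supp\mu=\Lambda$ is also handled in essentially the same way. So the strategy is sound.

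The gap is in the shadowing step. You write that ``a standard shadowing argument, available since $E^s$ and $E^u$ are uniformly hyperbolic transverse to the one-dimensional center, produces a genuine periodic orbit tracking this pseudo-orbit.'' This is false as stated: in a partially hyperbolic set with one-dimensional center, an arbitrary pseudo-orbit is \emph{not} shadowed by a true orbit, precisely because the center has no a priori hyperbolicity. What is available is Liao's shadowing lemma (Theorem~\ref{thm.Liao-Gan-shadowing}), applied with the dominated splitting $(E^s\oplus E^c)\oplus E^u$. Its hypothesis requires the Birkhoff averages of $\log\|Df|_{E^c}\|$ along \emph{every prefix} of the orbit segment to lie below a fixed negative constant $\lambda$. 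This forces two things you do not address: the contracting segment (near the negative-exponent periodic orbit) must come first, and the ratio of times must be tuned so that the running average never crosses $\lambda$, not merely so that the total average is small. Your condition that ``the ratios $T_n/(T_n+S_n)$ are chosen so that the cumulative center average is of order $1/n$'' controls only the full average, not the prefixes; with the expanding Pliss block in front, or with a ratio that lets the running average become positive mid-segment, the shadowing lemma simply fails to apply. This prefix control is the technical core of Proposition~\ref{main.prop}/Claim~\ref{c.time-controll} in the paper, and it is the step that actually converts your pseudo-orbit into a genuine periodic orbit. Incidentally, you should also be shadowing the orbit $\cO_{p_n}$ built at the previous stage rather than the original $p$, since the GIKN criterion demands that $\cO_{p_{n+1}}$ be $(\e_n,\kappa_n)$-good for $\cO_{p_n}$; this is likely a slip, but it matters for the bookkeeping.

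One genuine (minor) difference from the paper: you invoke Pliss' lemma to extract hyperbolic times on unstable disks, whereas the paper simply fixes a Lebesgue-density point $x$ on an unstable plaque whose empirical measures converge into $\Gib(f)$ and uses the uniform lower bound $\tau_0$ on center exponents of $u$-Gibbs states to get a uniformly positive Birkhoff average from some $N_0$ on. The latter is cleaner because Pliss times are unnecessary once you only need the lower bound~\eqref{eq:lower-bound-by-tau0}; either ingredient can be made to work, but neither by itself closes the shadowing gap above.
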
 
	
	The nonhyperbolic measures are obtained using the so-called GIKN method introduced in \cite{Goretal:05}, as detailed in 
	Section~\ref{ss.GIKN}.

	Let us briefly discuss the minimality of the strong foliations.
	Minimality of strong foliations is closely related to  transitivity. Recall that a diffeomorphism  is \emph{transitive} if it 
	has a dense orbit.  
	Among the set of robustly transitive 
	partially hyperbolic diffeomorphisms with one dimensional center, there exists a $C^1$-open and dense subset in which either the strong stable foliation  or the strong unstable foliation is robustly minimal \cite{BonDiaUre:02,RodRodUre:07}. Furthermore, if the diffeomorphisms have a periodic and compact center leaf, then both strong foliations are robustly minimal, and this is the case for skew-products with circle fibers and perturbations of time-one map of geodesic flows on negatively curved surfaces considered in this paper. For robustly transitive derived from Anosov diffeomorphisms obtained by Ma\~n\'e \cite{Man:78}, one only knows the minimality of one strong foliation, see \cite{BonDiaUre:02,PujSam:06}. Some further modification of Ma\~n\'e's construction  to obtain the minimality of both strong foliations can be found in \cite{RodUreYan:22}. 
	A key argument in \cite{BonDiaUre:02} is the perturbations to get blenders. 
	This mechanism also holds in the conservative setting \cite{Rodetal:10}. 
	Other perturbation tools in \cite{BonDiaUre:02} are $C^1$-closing lemma  and $C^1$-connecting lemma which hold in conservative setting \cite{PugRob:83,WeXi:00}. 
	Thus the results in \cite{BonDiaUre:02,RodRodUre:07} also hold in the conservative setting (see also the discussion in \cite[Section 5.6]{Bonetal:20}). 
	As volume preserving partially hyperbolic $C^{1+\alpha}$-diffeomorphisms that are accessible and have one dimensional center bundle are stably ergodic \cite{BurWil:10,RodRodUre:08b}, with the observation above, among that set there exists $C^1$-open and dense subset in which at least one of the strong foliations is robustly minimal.  Thus it is natural, in our setting,  to consider the  minimal properties of  strong foliations.

	The next result is a consequence of Theorem~\ref{thm.existence}:

	\begin{Coro}\label{ctct.both-minimal}
		Let  $f\in\ph^{1+\alpha}_{\mathrm{Leb}, c=1}(M)$
		such that its strong stable and unstable foliations are both minimal. Then the following dichotomy holds:
		\begin{itemize}
			\item either $f$ is Anosov;
			\item or $f$ has nonhyperbolic ergodic measures.
		\end{itemize} 
	\end{Coro}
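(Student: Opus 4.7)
The plan is to deduce the corollary from Theorem~\ref{thm.existence} applied with $\Lambda = M$. Two of its hypotheses are immediate: $M$ is partially hyperbolic with $\dim E^c = 1$, and $M$ is $\mathcal{F}^u$-minimal by assumption. What remains is either to establish the mostly expanding property on $M$ together with a periodic point of negative center exponent, or else to conclude that $f$ is Anosov. A preliminary observation I use throughout is that minimality of both strong foliations forces accessibility, so by the Burns--Wilkinson stable ergodicity theorem recalled in Section~\ref{ss.statements}, the volume $\mathrm{Leb}$ is ergodic.

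If $\lambda^c(\mathrm{Leb}) = 0$, then $\mathrm{Leb}$ itself is the sought nonhyperbolic ergodic measure, and we are done. Otherwise, by replacing $f$ with $f^{-1}$ if necessary---an operation preserving all hypotheses and the Anosov property while swapping the two minimal strong foliations---I may assume $\lambda^c(\mathrm{Leb}) > 0$.

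I next argue that $f$ is mostly expanding. Note that $\mathrm{Leb}\in\Gib(f)$ automatically. Suppose for contradiction some ergodic $\mu\in\Gib(f)$ satisfied $\lambda^c(\mu)\le 0$ (using that ergodic components of $u$-Gibbs states remain $u$-Gibbs states). The subcase $\lambda^c(\mu) = 0$ gives the conclusion directly, so assume $\lambda^c(\mu) < 0$; then $\mu$ is ergodic, hyperbolic and $u$-Gibbs, and in the $C^{1+\alpha}$ setting this makes $\mu$ a physical (SRB) measure, so its basin $B(\mu)$ has positive Lebesgue measure. However, ergodicity of $\mathrm{Leb}$ forces its basin to have full Lebesgue measure, and since $\lambda^c(\mu)\neq\lambda^c(\mathrm{Leb})$ the measures are distinct and their basins are disjoint---a contradiction.

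Finally, Theorem~\ref{thm.existence} applies as soon as we find a periodic point with negative center exponent. If none exists, I claim $f$ is Anosov. Since $\lambda^c(\mathrm{Leb})>0$, the bundle $E^c$ cannot be uniformly contracted, so if $f$ were not Anosov, $E^c$ would also fail to be uniformly expanded. A Pliss-type argument (taking weak-$*$ limits of empirical measures along an orbit where the expansion of $E^c$ fails, then ergodic decomposition) then yields an ergodic invariant measure $\nu$ with $\lambda^c(\nu)\le 0$: the value $0$ finishes the proof, while $\lambda^c(\nu)<0$ makes $\nu$ hyperbolic and, via Katok's closing lemma in the $C^{1+\alpha}$ setting, produces a hyperbolic periodic orbit with negative center exponent, contradicting our standing assumption. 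The delicate step I foresee is the physicality assertion in paragraph three---that a hyperbolic ergodic $u$-Gibbs state has basin of positive Lebesgue measure---which rests on the absolute continuity of Pesin stable foliations for such measures.
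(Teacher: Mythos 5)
The proposal contains a genuine gap in its very first step. You assert as a ``preliminary observation'' that minimality of both strong foliations forces accessibility, and then invoke Theorem~\ref{t.thenitisergodic} to conclude ergodicity of $\mathrm{Leb}$. This implication is false. A linear Anosov automorphism $A$ of $\mathbb{T}^3$ with three distinct real eigenvalues (one contracting center, say) lies in $\ph^{1+\alpha}_{\mathrm{Leb},c=1}(\mathbb{T}^3)$ and has both strong foliations minimal (they are irrational lines), yet it is not accessible: the accessibility class of a point is exactly the $E^s\oplus E^u$ plane through it, a proper codimension-one foliation. More generally, minimality of $\mathcal{F}^s$ and $\mathcal{F}^u$ only implies that each accessibility class is \emph{dense}, not that it equals $M$. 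This is not a cosmetic problem: ergodicity of $\mathrm{Leb}$ is the load-bearing hypothesis in both your second paragraph (where $\lambda^c(\mathrm{Leb})=0$ only produces a nonhyperbolic \emph{ergodic} measure if $\mathrm{Leb}$ is ergodic; the averaged exponent being zero for a non-ergodic volume gives no such measure) and your third paragraph (the basin-disjointness contradiction requires the basin of $\mathrm{Leb}$ to have full measure). Without it, the argument that $f$ is mostly expanding collapses.

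The paper sidesteps ergodicity of $\mathrm{Leb}$ entirely. In place of your basin argument it proves Claim~\ref{cl.onlyone}: if some ergodic $\mu\in\Gib(f)$ has $\lambda^c(\mu)<0$, then \emph{minimality of $\mathcal{F}^u$} together with absolute continuity of Pesin stable manifolds forces $\mu$ to be the unique $u$-Gibbs state, hence $\mu=\mathrm{Leb}$; it then deduces that $f^{-1}$ is mostly expanding and applies Proposition~\ref{p.intermediate} with $\Lambda=M$. Your third paragraph uses the same physicality/absolute-continuity ingredient, but hangs it on an unavailable ergodicity statement rather than on minimality of the strong unstable foliation. If you replace the ergodicity-of-$\mathrm{Leb}$ step by a uniqueness-of-$u$-Gibbs argument driven by $\mathcal{F}^u$-minimality, and handle the $\lambda^c(\mathrm{Leb})=0$ case through the ergodic decomposition into $u$-Gibbs components rather than through ergodicity of the volume, the proof can be repaired; your fourth paragraph (the Pliss/Katok dichotomy for ``either a negative-center periodic point or uniform expansion of $E^c$'') is a sound re-derivation of Proposition~\ref{p.intermediate} and can stay as is.
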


	\begin{Remark}[Anomalous partially hyperbolic diffeomorphisms]
		\label{rem:bonetal}
		{\em{The class of anomalous examples in \cite{Bonetal:20} is extensive, with several proposed subcategories featuring specific properties. Some of these examples are stably ergodic and have minimal strong foliations (see \cite[Theorem 1.1]{Bonetal:20} and the subsequent comments). As a consequence of Corollary~\ref{ctct.both-minimal}, such diffeomorphisms exhibit nonhyperbolic measures. The construction of these examples in \cite{Bonetal:20} involves the use of cone fields alongside a detailed analysis of global dynamics. Since this type of construction is still in its early stages, it is likely that new examples can be developed using these methods, potentially leading to further applications of Theorem~\ref{thm.existence}.} }
	\end{Remark}
	
	We presented several settings where the accessibility property holds. This is explicitly required in Theorem~\ref{thm.nilmanifold-case}. By Theorem~\ref{thm.both-usu-accessible-of-geodesic-flow}, it is also an implicit condition in Theorem~\ref{thm.geodesic-flow-case}. Furthermore, according to \cite{HamShi:21}, every non-Anosov partially hyperbolic diffeomorphism on $\mathbb{T}^3$ that is homotopic to an Anosov diffeomorphism is accessible. 
	Thus this condition is also implicit in Theorem~\ref{thm.DA-case}.
	However, our proofs rely on additional geometric-like properties beyond accessibility. This raises the question of whether accessibility alone would be sufficient to prove our results.

	\begin{Question}
		Let $M$ be a three-manifold and   $f\in\ph^{1+\alpha}_{\mathrm{Leb}, c=1}(M)$
		be  accessible. Does the following dichotomy hold 
		\begin{itemize}
			\item either $f$ is Anosov;
			\item or $f$ admits nonhyperbolic ergodic measures?
		\end{itemize}
	\end{Question}
	We believe the answer to this question is affirmative. Applying the arguments from \cite{AviCroWil:,CroPot:} may be a fruitful strategy 
	to address this question.

	We conclude this section with Theorem~\ref{t.transversecondition}, which presents a somewhat technical dichotomy but provides a ready-to-use solution. It is related to the previous question and involves the "transverse condition" introduced in \cite{AviCroWil:} (see also \cite{CroPot:}). For further details, including the precise definition see the discussion at the beginning of Section \ref{ss.transversecondition}. 
	This is an elaborate and deep condition. In simple terms, it requires an abundance of nearby points whose local strong unstable manifolds are not ``aligned''  with the strong stable foliation. Besides its intrinsic interest, this theorem also serves 
	as an important technical step in our constructions, playing a fundamental role in the
	study of diffeomorphisms homotopic to Anosov.

	\begin{theoremalph}\label{t.transversecondition}
		Let  $f\in\ph^{1+\alpha}_{\mathrm{Leb}, c=1}(M)$ be accessible and such that $\lambda^c(\mathrm{Leb})>0$.
		Suppose that there exists a minimal $\cF^u$-saturated {and $f$-invariant}  set $\Lambda$  which is {$s$-transverse}.
		Then 
		\begin{itemize}
			\item either $f$ is Anosov;
			\item or $f$ has a nonhyperbolic ergodic measure supported on $\Lambda$.
		\end{itemize} 
	\end{theoremalph}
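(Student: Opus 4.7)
The proof plan is to verify the hypotheses of Theorem~\ref{thm.existence} on $\Lambda$ and resolve the dichotomy through an additional argument when the periodic hypothesis fails. Since $\Lambda$ is $\cF^u$-minimal by assumption, and partial hyperbolicity with one-dimensional center is inherited from $f$, two properties must be established: the mostly expanding property on $\Lambda$, and the existence in $\Lambda$ of a periodic point with negative center Lyapunov exponent.

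First I will prove that $\Lambda$ is mostly expanding. This is the heart of the argument and is where the $\cF^u$-transverse condition is essential. Following the analysis of Avila-Crovisier-Wilkinson \cite{AviCroWil:} and Crovisier-Potrie \cite{CroPot:}, under accessibility a $u$-Gibbs state $\mu$ supported on $\Lambda$ with $\lambda^c(\mu)\le 0$ forces a rigid absolute-continuity behavior of its center-stable holonomies along $\Lambda$. In the conservative setting this rigidity propagates through $s$-accessibility classes and ultimately conflicts with the standing hypothesis $\lambda^c(\mathrm{Leb})>0$, giving a contradiction. Hence every $u$-Gibbs state supported on $\Lambda$ has strictly positive center exponent, i.e., $\Lambda$ is mostly expanding.

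Next I handle the periodic-point hypothesis via the dichotomy. If $\Lambda$ contains a periodic point with negative center Lyapunov exponent, Theorem~\ref{thm.existence} immediately applies and produces a nonhyperbolic ergodic measure with support equal to $\Lambda$, giving the second alternative. Otherwise every periodic point in $\Lambda$ has non-negative center exponent. Combining this with the mostly expanding property and a periodic approximation argument exploiting the $\cF^u$-saturation of $\Lambda$ (so that Pesin unstable manifolds of any hyperbolic ergodic measure on $\Lambda$ with negative center exponent lie inside $\Lambda$, and a Katok-type closing construction can be arranged to yield periodic orbits inside $\Lambda$), I obtain that every ergodic measure supported on $\Lambda$ has non-negative center exponent. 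If some ergodic measure has center exponent zero it is the required nonhyperbolic measure. Otherwise all ergodic measures on $\Lambda$ have strictly positive center exponent, so by domination and compactness $E^c$ is uniformly expanding over $\Lambda$, making $\Lambda$ a uniformly hyperbolic attracting set with splitting $E^s\oplus(E^c\oplus E^u)$. Stable ergodicity of $f$ in the accessible conservative $C^{1+\alpha}$ setting \cite{BurWil:10,RodRodUre:08b} then forces the basin of $\Lambda$ to have full volume, hence $\Lambda=M$ and $f$ is Anosov.

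The main obstacle is the first step: precisely harnessing the $\cF^u$-transverse condition to exclude non-positive center Lyapunov exponents for $u$-Gibbs states on $\Lambda$ in a semi-local, possibly non-coherent framework. This requires a careful adaptation of the $u$-Gibbs analysis of \cite{AviCroWil:,CroPot:}, with accessibility and the positivity $\lambda^c(\mathrm{Leb})>0$ serving as the rigid anchor that converts the center-stable absolute continuity produced by such a $u$-Gibbs state into the desired contradiction with the volume's positive center exponent.
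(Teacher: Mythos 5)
Your plan has two issues, one of misattribution and one of a genuine gap, and they are related.

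First, the misattribution: you claim the $\cF^u$-transverse condition is ``the heart of the argument'' for establishing the mostly expanding property on $\Lambda$, via an adaptation of the $u$-Gibbs rigidity analysis of \cite{AviCroWil:,CroPot:}. This is not needed. Mostly expanding follows immediately from accessibility together with $\lambda^c(\mathrm{Leb})>0$ by Theorem~\ref{thm.mostly-expanding-criterion} (Theorem D of \cite{Yan:21}); no transversality enters that step. Once mostly expanding holds and $\Lambda$ is $\cF^u$-minimal, Proposition~\ref{p.intermediate} directly gives the dichotomy ``either $\Lambda$ supports a nonhyperbolic measure, or $\Lambda$ is uniformly hyperbolic with uniformly expanding center''; your longer route through the existence/absence of periodic points and a Katok closing argument is not wrong in spirit, but it is a more cumbersome re-derivation of what Proposition~\ref{p.intermediate} already packages.

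The genuine gap is in the final step, which is exactly where the transverse condition is in fact essential and where you drop it. After the dichotomy, in the second alternative you have a compact, $f$-invariant, $\cF^u$-saturated set $\Lambda$ that is uniformly hyperbolic with splitting $E^s\oplus(E^c\oplus E^u)$. You then assert $\Lambda$ is ``a uniformly hyperbolic attracting set'' and invoke stable ergodicity to get full-volume basin and hence $\Lambda = M$. But $u$-saturation only puts the \emph{strong} unstable leaves $\cF^u(x)$ inside $\Lambda$; it does not give you the $(c+u)$-dimensional unstable manifolds $W^u(x)$ tangent to $E^c\oplus E^u$ inside $\Lambda$. Without that, $\Lambda$ need not be an attractor (a horseshoe is uniformly hyperbolic but its unstable manifolds leave the set), so the basin argument has no footing. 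This is precisely what the transversality condition buys: combined with the uniformly expanding center (which verifies the SH condition), $\cF^u$-transversality forces $\Lambda$ to contain an actual $cu$-disk $\Delta$ (Remark~\ref{r.transversality}, from \cite{AviCroWil:}). Once such a disk sits inside $\Lambda$, one uses ergodicity of $\mathrm{Leb}$ (Theorem~\ref{t.thenitisergodic}) plus the strong stable foliation to show that the orbit of $\Delta$ is dense; since $\Lambda$ is closed and invariant, $\Lambda = M$ and $f$ is Anosov. So the correct logic routes transversality not through the $u$-Gibbs analysis but through the production of a $cu$-disk inside $\Lambda$ in the hyperbolic alternative.
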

	
	A motivation for Theorem~\ref{t.transversecondition} comes from the following borrowed result, which is a direct consequence of \cite[Corollary 1.7]{AviCroWil:}.
	
	\begin{Theorem}[\cite{AviCroWil:}]
		\label{thm.acw}
		Let $f\in\ph^{1+\alpha}_{\mathrm{Leb}, c=1}(M)$ be accessible.
		Let $\Lambda$ be    a $\mathcal{F}^{u}$-saturated which is $f$-invariant and has  uniformly  expanding center.
		If $\Lambda$ is $s$-transverse, 
		then $\mathcal{F}^{u}$ is minimal. In particular, the diffeomorphism $f$ is Anosov.
	\end{Theorem}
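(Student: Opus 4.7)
The plan is to show that the $\cF^u$-lamination $\Lambda$ given in the hypothesis exhausts the whole manifold $M$. Once this is established, minimality of $\cF^u$ follows immediately, and the Anosov conclusion is automatic since the partially hyperbolic splitting $E^s\oplus(E^c\oplus E^u)$ on $M=\Lambda$ becomes a genuine hyperbolic splitting.

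First, I would exploit the hyperbolic structure on $\Lambda$. Uniform expansion of $E^c$ along $\Lambda$ turns $\Lambda$ into a hyperbolic set for $f$ with $2$-dimensional unstable bundle $E^c\oplus E^u$ and $1$-dimensional stable bundle $E^s$. Since $\Lambda$ is closed, invariant, and $\cF^u$-saturated, a Pesin--Sinai construction (pushing $\mathrm{Leb}$ on an $\cF^u$-plaque contained in $\Lambda$ forward under $f^n$ and averaging) furnishes an ergodic $u$-Gibbs state $\mu$ with $\supp(\mu)\subset\Lambda$; by uniform expansion, $\lambda^c(\mu)>0$. Replacing $\Lambda$ by an $\cF^u$-minimal subset via Zorn's lemma if necessary, I may further assume that $\Lambda$ is $\cF^u$-minimal and $\supp(\mu)=\Lambda$.

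The central step, and the one I expect to be the genuine obstacle, is to promote the $\cF^u$-absolute continuity of $\mu$ to genuine absolute continuity with respect to $\mathrm{Leb}$ on $M$. The guiding idea is that accessibility supplies, between any two nearby points of $M$, a finite $su$-path, while the transverse condition on $\cF^u$ prevents $\cF^s$-holonomies between $\cF^u$-plaques from degenerating into a skew-product form, forcing them to move transversely. Combining this transverse motion with $\lambda^c(\mu)>0$, which produces Pesin stable manifolds compatible with $\cF^s$, one invokes the technical core of \cite{AviCroWil:} to control the Jacobians of $su$-holonomies uniformly on a set of full $\mu$-measure; a Hopf-type argument along these $su$-paths then spreads the $u$-Gibbs property into a full neighborhood of $\Lambda$ and forces $\mu\ll\mathrm{Leb}$. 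Since $f$ is accessible with one-dimensional (hence center-bunched) center, by Burns--Wilkinson $\mathrm{Leb}$ is ergodic for $f$, and as $\mu$ is ergodic, $f$-invariant, and absolutely continuous with respect to the ergodic probability $\mathrm{Leb}$, one concludes $\mu=\mathrm{Leb}$.

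To conclude, since $\supp(\mu)\subset\Lambda$ and $\supp(\mathrm{Leb})=M$, the equality $\Lambda=M$ is forced; $\cF^u$-minimality of $\Lambda=M$ then gives minimality of $\cF^u$; and uniform expansion of $E^c$ on all of $M=\Lambda$ turns the partially hyperbolic splitting into an Anosov splitting $E^s\oplus(E^c\oplus E^u)$, so $f$ is Anosov. Essentially all the content sits in the third paragraph: carrying it out faithfully requires the holonomy-cocycle control, Pesin-block bookkeeping, and accessibility-path analysis that form the technical heart of \cite{AviCroWil:} and cannot be shortcut without reproducing that machinery.
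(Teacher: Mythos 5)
The paper does not prove Theorem~\ref{thm.acw} at all: it is explicitly presented as a result borrowed from \cite{AviCroWil:} (a work in preparation), so there is no in-paper argument to compare yours against. The closest the paper comes is the proof of Theorem~\ref{t.transversecondition} in Section~4.1, where the authors handle exactly the configuration you are aiming at (accessible, volume-preserving, a uniformly hyperbolic lamination with expanding center satisfying the transversality condition) by a purely geometric route: Remark~\ref{r.transversality} says that transversality plus the SH property (automatic under uniform center expansion) forces $\Lambda$ to contain a $cu$-disk $\Delta$, and then ergodicity of $\mathrm{Leb}$ (Theorem~\ref{t.thenitisergodic}) together with local product structure makes the forward orbit of $\Delta$ dense, giving $\Lambda=M$ and the Anosov conclusion.

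Your proposal has a genuine gap precisely where you flag it. The step ``promote the $u$-Gibbs state $\mu$ on $\Lambda$ to a measure absolutely continuous with respect to $\mathrm{Leb}$'' is not justified by anything you write, and it is not the consequence that the transversality condition is known to deliver; what it delivers is the existence of a $cu$-disk inside the lamination (a statement about the geometry of $\Lambda$, not about the measure class of $\mu$). A $u$-Gibbs state carried by a proper, closed, $u$-saturated hyperbolic set is in general singular with respect to volume, and your sketch of ``controlling Jacobians of $su$-holonomies'' plus ``a Hopf-type argument'' to force $\mu\ll\mathrm{Leb}$ essentially presupposes that $\Lambda$ already has full volume, which is what you are trying to prove. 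Since you then concede that this step ``cannot be shortcut without reproducing that machinery,'' the proposal amounts to citing the theorem rather than proving it. The surrounding bookkeeping (Pesin--Sinai construction of $\mu$ with $\lambda^c(\mu)>0$, the implication $\mu\ll\mathrm{Leb}\Rightarrow\mu=\mathrm{Leb}$ by ergodicity, and $\Lambda=M\Rightarrow$ Anosov) is fine, but the core is missing; a repairable route would replace your third paragraph with the $cu$-disk argument of Remark~\ref{r.transversality} and the density argument from Section~4.1.
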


	In view of Theorem~\ref{thm.acw}, to prove Theorem~\ref{t.transversecondition}, we need to explore dynamics where
	the transverse condition is not satisfied. To do so, we use Theorem~\ref{t.acepwz} borrowed from
	\cite{AviCroEskPotWilZha:}.

	\section{Construction of nonhyperbolic measures: Proof of Theorem~\ref{thm.existence}}
	\label{s.construction}
	
	In this section, we firstly recall  the GIKN criterion stating that  a sequence of periodic measures supported on orbits with the good approximation property would converge  to an ergodic measure. In Section~\ref{ss.shadowing},  we recall Liao's shadowing lemma which allows us to find periodic orbits without doing perturbations. Then, in Section~\ref{ss.Gibbsmostly}, we collect some basic properties for mostly expanding diffeomorphisms as well as mostly expanding sets.  In Section~\ref{ss.goodperiodic}, we combine these tools to
	develop a mechanism for obtaining periodic orbits with the good approximation property.
	This allows us to inductively construct a sequence of periodic measures satisfying the GIKN criterion, thereby proving Theorem~\ref{thm.existence},
	see Section~\ref{ss.endproofE}.
	
	\subsection{The GIKN criterion}
	\label{ss.GIKN}
	
	The nonhyperbolic measures will be obtained as limits of sequences of periodic measures that satisfy the GIKN criterion, introduced by Gorodetski et al. in \cite{Goretal:05}, which ensures the ergodicity of the limit of such sequences. 
	
	To proceed, we need to introduce some terminology.
	Given a compact metric space $(X,\ud)$
	and a dense subset  $\{\varphi_n\}$ of the space of continuous functions on $X$ endowed with $C^0$-norm
	$\|\cdot\|_0$,
	the distance of two   Borel probability measures $\mu,\nu$ on $X$ associated to that sequence
	is defined as 
	\[
	\ud(\mu,\nu) ) \eqdef \sum_{n=1}^\infty\frac{|\int\varphi_n\ud\mu-\int\varphi_n\ud\nu|}{2^n(\|\varphi_n\|_{0}+1)}.
	\]
	Independently of choice of the dense sequence of maps, 
	this metric gives rise to the weak$*$-topology on the space of Borel probability measures on $X$.

	Given a diffeomorphism $f$, denote by $\mathrm{Per}(f)$ the set of periodic points of
	$f$ and by $\mathrm{Per}_{\mathrm{hyp}}(f)$ its subset of hyperbolic ones.
	Given a periodic orbit $\cO$, the \emph{periodic measure} $\delta_{\cO}$ is the atomic probability measure  
	equidistributed  on the orbit $\cO$; in a formula and denoting by $\# S$ the cardinality of a set $S$,
	\[
	\delta_{\cO}  \eqdef \frac{1}{\#\cO}\sum_{x\in\cO}\delta_x. 
	\]
	Given a map $f$ and a point $q$, we denote by $\cO_q$ its orbit by $f$.

	\begin{Definition}
		Given a compact metric space $(X,\ud)$, a homeomorphism $f:X \to X$,
		a periodic orbit $\cO_q$ of $f$,   and two numbers $\e>0$ and $\kappa\in(0,1)$, a periodic orbit $\cO_p$ is \emph{$(\e,\kappa)$-good} for $\cO_q$ if there exist a subset $K\subset \cO_p$ and a surjective map $\pi:K\to\cO_q$ such that 
		\begin{itemize}
			\item $\# K/\#\cO_p\geq \kappa;$
			\item for every $x\in K$, one has \[\ud\big(f^i(x),f^i(\pi(x))\big)<\e\quad \textrm{for any $i=0,\cdots, m-1$},\] where $m$ is the period of $q$;
			\item  the cardinality $\#\pi^{-1}(y)$ is independent of the point $y\in\cO_q$. 
		\end{itemize}
	\end{Definition}

	We now recall the GIKN criterion introduced in \cite{Goretal:05}. 
	We state the slightly improved version in \cite[Lemma2.5]{BonDiaGor:10} that
	characterizes the support of the limit measures.

	\begin{Theorem}[GIKN criterion]~\label{thm.GIKN-criterion}
		Let   $f\colon X\to X$ be a homeomorphism on  a compact metric space $X$, $\cO_{p_n}$ a sequence of periodic orbits of $f$ whose periods  tend to infinity, and sequences of positive real numbers
		$(\e_n)$ and $(\kappa_n)$ such that:
		\begin{itemize}
			\item $\cO_{p_{n+1}}$ is $(\e_n,\kappa_n)$-good for the periodic orbit $\cO_{p_n}$; 
			\item $\sum_{n\in\NN}\e_n<+\infty$ and $\prod_{n\in\NN}\kappa_n\in(0,1)$.
		\end{itemize}
		Then the sequence of periodic measure $(\delta_{\cO_{p_n}})$ converges in the weak$*$-topology to an ergodic measure $\nu$ with support 
		$$
		\supp\nu=\bigcap_{n=1}^\infty\overline{\bigcup_{k=n}^\infty\cO_{p_k}}.
		$$
	\end{Theorem}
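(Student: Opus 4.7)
The plan is to prove Theorem~\ref{thm.GIKN-criterion} in three stages: weak$*$ convergence of the periodic measures $\delta_{\cO_{p_n}}$, identification of the support of the limit, and ergodicity of the limit.

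For convergence, I would compare consecutive terms using the $(\e_n,\kappa_n)$-good structure. Write $K_n \subset \cO_{p_{n+1}}$ for the ``good'' subset and $\pi_n \colon K_n \to \cO_{p_n}$ for the surjection with constant fibers. Given a continuous $\varphi$ on $X$ with modulus of continuity $\omega_\varphi$, grouping $K_n$ by the preimages of $\pi_n$ and invoking the $\e_n$-shadowing along the full period of $\cO_{p_n}$ yields
\[
\Bigl|\int \varphi \, \ud\delta_{\cO_{p_{n+1}}} - \int \varphi \, \ud\delta_{\cO_{p_n}}\Bigr| \le \omega_\varphi(\e_n) + 2(1-\kappa_n)\|\varphi\|_0.
\]
Since $\sum \e_n < \infty$ and $\prod \kappa_n \in (0,1)$ forces $\sum (1-\kappa_n) < \infty$, the right-hand side is summable. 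Letting $\varphi$ run over a countable dense subset of $C^0(X)$, the sequence $(\delta_{\cO_{p_n}})$ is weak$*$ Cauchy and converges to an $f$-invariant Borel probability $\nu$. For the support, the inclusion $\supp \nu \subset \Lambda \eqdef \bigcap_n \overline{\bigcup_{k\ge n} \cO_{p_k}}$ is immediate from weak$*$ lower semi-continuity. The reverse inclusion exploits the good-approximation chain iteratively: for $x \in \Lambda$ and a neighborhood $U$, choose $n_0$ and $y_0 \in \cO_{p_{n_0}}$ close enough to $x$ that $\sum_{j \ge n_0} \e_j$ is smaller than the radius of $U$; then a fraction at least $\prod_{k \ge n_0} \kappa_k > 0$ of the iterated preimage $(\pi_{n_0} \circ \cdots \circ \pi_{k-1})^{-1}(y_0) \subset \cO_{p_k}$ sits inside $U$, forcing $\nu(U)>0$.

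The decisive step is ergodicity. For each $n$ and each $y \in \cO_{p_n}$ I would define the \emph{dynamical cylinder} $V_n(y) \subset X$ as the set of limits of nested sequences $(x_k)_{k\ge n}$ with $x_k \in \cO_{p_k}$ and $\pi_{k-1}\circ \cdots \circ \pi_n(x_k) = y$. Three properties must be verified: (i) the union $\bigcup_{y \in \cO_{p_n}} V_n(y)$ has $\nu$-measure at least $\prod_{k \ge n} \kappa_k$, so after sending $n \to \infty$ these atoms exhaust a full-measure set; (ii) $\diam V_n(y) \to 0$ uniformly in $y$, thanks to $\sum \e_k < \infty$; (iii) the constancy of the fibers $\#\pi_k^{-1}(y)$ combined with the cyclic $f$-action on $\cO_{p_n}$ shows that $f$ sends $V_n(y)$ into $V_n(f(y))$ modulo an error set of asymptotically vanishing $\nu$-measure, whence all $V_n(y)$ carry the common mass $\nu(V_n(y)) \sim 1/\#\cO_{p_n}$. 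Given an $f$-invariant Borel set $A$ with $\nu(A) > 0$, the cyclicity forces the conditional densities $\nu(A \cap V_n(y))/\nu(V_n(y))$ to become asymptotically independent of $y$; since $\{V_n(y)\}$ refine and shrink, a Lebesgue-differentiation argument along this refining sequence gives $\nu(A) \in \{0,1\}$, establishing ergodicity.

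The main obstacle will be orchestrating step (iii): simultaneously securing uniform atomic mass, cyclic $f$-permutation modulo vanishing errors, and shrinking diameters of the cylinders $V_n(y)$. The constancy of the fibers $\#\pi_n^{-1}(y)$ is indispensable here, since it is precisely what transports the uniform counting measure on $\cO_{p_n}$ one level up through $\pi_n$; without this symmetry the masses at each fixed level would drift and no differentiation argument would suffice. Quantitative versions of these estimates, along the lines of \cite{Goretal:05} and its sharpening in \cite[Lemma 2.5]{BonDiaGor:10}, are exactly what allow the identification $\supp \nu = \Lambda$ to go through in tandem with ergodicity.
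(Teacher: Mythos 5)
The paper does not actually prove this theorem; it is a borrowed result, cited to \cite{Goretal:05} for the criterion and to \cite[Lemma 2.5]{BonDiaGor:10} for the support characterization, so there is no in-paper argument to compare against. Measured against those sources, your first two stages track them faithfully: the telescoping estimate $|\int\varphi\,\ud\delta_{\cO_{p_{n+1}}}-\int\varphi\,\ud\delta_{\cO_{p_n}}|\le\omega_\varphi(\e_n)+2(1-\kappa_n)\|\varphi\|_{0}$, the observation that $\prod_n\kappa_n\in(0,1)$ forces $\sum_n(1-\kappa_n)<\infty$, and the two inclusions for the support (lower semi-continuity one way, the good-chain mass estimate $\prod_{k\ge n}\kappa_k>0$ the other) are exactly the right computations.

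Your ergodicity argument, however, takes a different route and step~(iii) has a gap. The definition of $(\e_n,\kappa_n)$-good gives a surjection $\pi_n\colon K_n\to\cO_{p_n}$ with constant fibers and the shadowing $\ud\bigl(f^i(x),f^i(\pi_n(x))\bigr)<\e_n$ for $0\le i<\#\cO_{p_n}$; it imposes no equivariance on $\pi_n$. Even when both $x$ and $f(x)$ lie in $K_n$, one cannot deduce $\pi_n(f(x))=f(\pi_n(x))$: both points of $\cO_{p_n}$ are $\e_n$-close to $f(x)$, which determines them only if $\e_n$ is much smaller than the minimal separation inside $\cO_{p_n}$ --- and that is not a hypothesis. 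So the cylinders $V_n(y)$ are not carried by $f$ onto $V_n(f(y))$, the claimed cyclic permutation modulo a vanishing error set is precisely the estimate that would need to be proved, and the differentiation/martingale step that concludes $\nu(A)\in\{0,1\}$ does not close. The proofs in \cite{Goretal:05,BonDiaGor:10} avoid needing any $f$-equivariant filtration: they show directly that for a $\nu$-full-measure set of $z$ the Birkhoff averages of every continuous $\varphi$ along the orbit of $z$ converge to $\int\varphi\,\ud\nu$ --- the constant-fiber condition is used there to bound, level by level, the proportion of iterates not well shadowed, with losses summing to $\sum(1-\kappa_k)<\infty$ --- and a measure whose typical points are generic for it is ergodic. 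That generic-points argument is what you would want to substitute for your step~(iii).
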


	\subsection{Liao's  shadowing lemma}\label{ss.shadowing}
	We recall the shadowing lemma by Liao~\cite{Lia:79},  stating its reformulation in ~\cite{Gan:02}.
	We will use this lemma to find periodic orbits satisfying the GIKN criterion.
	This lemma involves the concept of a dominated splitting, see \cite{BonDiaVia:05} for a definition. We skip this definition, for our purposes it is enough to have in mind that in our setting $E^{cs} \oplus E^u$ and $E^s\oplus E^{cu}$
	are both dominated splittings.  
	
	\begin{Theorem}[Theorem 1.1 in \cite{Gan:02}]~\label{thm.Liao-Gan-shadowing}
		Let $f\in\diff^1(M)$ and $\Lambda$ be an $f$-invariant compact set with a dominated splitting 
		of the form $T_\Lambda M=E\oplus F.$ Then for every $\lambda<0$, there exist  constants $L, d_0>0$ such that for any $d\in(0,d_0)$ and any orbit segment  $\{x,f(x),\cdots, f^n(x) \}$ such that
		\begin{enumerate}
			\item  $\ud(x,f^n(x))<d$; 
			\item $\frac{1}{k}\sum_{i=0}^{k-1}\log\|Df|_{E(f^i(x))}\|\leq \lambda   \textrm{ for every $1\leq k\leq n$};$ and
			\item $\frac{1}{k}\sum_{i=0}^{k-1}\log\|Df^{-1}|_{F(f^{n-i}(x))}\|\leq \lambda \textrm{ for every  $1\leq k\leq n$};$
		\end{enumerate}
		there exists a hyperbolic periodic point $p$ of period $n$ whose stable bundle has dimension $\dim (E)$
		such that 
		$$
		\ud(f^i(x),f^i(p))\leq L\cdot d, \quad \mbox{for every $i=0,\cdots,n-1$}.
		$$
	\end{Theorem}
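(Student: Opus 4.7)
The plan is to follow Liao's classical strategy: use the dominated splitting together with the averaged hyperbolicity hypotheses (2) and (3) to produce local invariant manifolds of uniform size along the orbit segment, and then close up the orbit via a contraction-mapping fixed point argument, exploiting hypothesis (1) to place $f^n(x)$ within $d$ of $x$.

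First, I would extend the splitting $E \oplus F$ continuously to a neighborhood $U$ of $\Lambda$ and pass to an adapted Riemannian metric making it $1$-dominated, following \cite{Gou:07}. Fix an auxiliary $\mu$ with $\lambda < \mu < 0$. Condition (2) is a Pliss-type uniform control: it implies $\|Df^k|_{E(x)}\| \leq e^{k\lambda}$ for every $1 \leq k \leq n$, so the $E$-direction is uniformly forward-contracting along the entire orbit segment starting from $x$. Dually, (3) yields $\|Df^{-k}|_{F(f^n x)}\| \leq e^{k\lambda}$, so $F$ is uniformly backward-contracting from $f^n(x)$. Combined with the domination, these estimates propagate through the standard graph-transform machinery and produce, with constants independent of $n$, a local $E$-disk $W^s_{\mathrm{loc}}(x)$ tangent to $E(x)$ of some definite size $\rho > 0$, together with a local $F$-disk $W^u_{\mathrm{loc}}(f^n x)$ tangent to $F(f^n x)$ of comparable size; these play the role of strong stable and strong unstable manifolds along this quasi-hyperbolic segment.

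Second, I would perform the closing step in a chart around $x$. Since $\ud(x, f^n(x)) < d$ with $d \ll \rho$, the preimage $f^{-n}\bigl(W^u_{\mathrm{loc}}(f^n x)\bigr)$ still meets the chart at $x$ and remains graphical over $F$ with uniformly bounded slope, thanks to domination and the backward contraction of $F$; an analogous statement holds for $W^s_{\mathrm{loc}}(x)$, which is graphical over $E$. These two disks have complementary dimension in $T_x M$, are nearly tangent to $E(x)$ and $F(x)$ respectively, and lie in a neighborhood of $x$ of size $O(d)$; hence they meet transversally in a unique point $p$. By construction $f^n(p) = p$, and the uniform graph-transform estimates force $p$ to be hyperbolic of stable index $\dim E$ and of period exactly $n$. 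The shadowing bound $\ud(f^i x, f^i p) \leq L\, d$, with $L$ independent of $d$ and $n$, follows from applying the same graph-transform contraction to the pair $(f^i(x), f^i(p))$ inside each intermediate chart.

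The main technical obstacle is establishing that the size $\rho$ of the local invariant manifolds can be chosen independently of $n$ starting only from the averaged exponent bounds in (2) and (3). This is the heart of Liao's contribution: the averaged bounds supply exponential contraction but not an \emph{a priori} fixed hyperbolic rate on individual iterates, so one must carefully combine them with the domination constant to bootstrap the graph-transform estimates and absorb the nonlinear errors that arise because the extended bundles $\tilde E, \tilde F$ are only approximately invariant off $\Lambda$. These errors must be swallowed by the spectral gap furnished by the margin $\mu - \lambda > 0$, which is ultimately what forces the quantitative dependence of $L$ and $d_0$ on $\lambda$.
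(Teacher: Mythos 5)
The paper does not prove this statement; it is quoted verbatim as Theorem~1.1 of \cite{Gan:02}, so there is no in-paper proof to compare against, and I evaluate your argument on its own merits. Your preliminary reductions are sound: submultiplicativity of operator norms turns condition (2) into $\|Df^k|_{E(x)}\|\leq e^{k\lambda}$ for $1\leq k\leq n$ and condition (3) into $\|Df^{-k}|_{F(f^n x)}\|\leq e^{k\lambda}$ for $1\leq k\leq n$, and it is indeed the domination that supplies the cone control to propagate these through graph transforms with constants independent of $n$. The genuine gap is in the closing step. You propose to intersect $W^s_{\mathrm{loc}}(x)$ (graphical over $E(x)$) with $f^{-n}\bigl(W^u_{\mathrm{loc}}(f^n x)\bigr)$ (graphical over $F(x)$) and declare the unique intersection point $p$ to satisfy $f^n(p)=p$. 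But both disks pass through $x$: trivially $x\in W^s_{\mathrm{loc}}(x)$, and $x=f^{-n}(f^n x)\in f^{-n}\bigl(W^u_{\mathrm{loc}}(f^n x)\bigr)$. Since the two disks are transverse with complementary dimension, their intersection is exactly $\{x\}$, so your construction returns $p=x$, which is generally not periodic and not hyperbolic. Moreover, by (3) the map $f^{-n}$ contracts the $F$-direction at rate $e^{n\lambda}$, so $f^{-n}\bigl(W^u_{\mathrm{loc}}(f^n x)\bigr)$ has diameter at most of order $e^{n\lambda}\rho$; it is a microscopic disk centered at $x$, not a disk of size comparable to $\rho$ translated by $O(d)$, contrary to your claim, and the assertion that ``$f^n(p)=p$ by construction'' has no support.

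The missing idea is that periodicity is not obtained by intersecting two local invariant manifolds of the \emph{same} orbit segment; it is obtained by regarding $x, f(x),\dots, f^{n-1}(x)$ as a \emph{periodic} pseudo-orbit of period $n$, with jump of size $<d$ at time $n$ supplied by (1), and proving a shadowing lemma for quasi-hyperbolic strings together with \emph{uniqueness} of the shadowing orbit. Because the periodic pseudo-orbit is invariant under the shift by $n$, the unique shadowing orbit must be as well, and this is what forces $f^n(p)=p$. Concretely, one sets up a Banach fixed-point argument on the space of $n$-periodic sequences $(v_i)_{i\in\mathbb{Z}/n\mathbb{Z}}$ with $v_i\in T_{f^i(x)}M$ small, where the chain of linear maps $Df(f^i x)$ is hyperbolic in the sense of admitting a uniformly bounded Green's-function-type inverse, precisely because of (2), (3), and domination; the implicit function theorem then yields the periodic point with $\ud(f^i(x),f^i(p))\leq L\cdot d$, with $L$ controlled by the bound on the inverse. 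This is the content of Liao's argument as reworked by Gan. Your finite-time stable/unstable disks are a reasonable heuristic and your remark that the hyperbolicity of $p$ can be read off from (2) at $k=n$ once $p$ is found is fine, but the closing step as written does not produce a periodic point and must be replaced by the pseudo-orbit/shadowing (or, equivalently, periodic-sequence fixed-point) argument.
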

	
	\begin{Remark}\label{r.liaogan}
		{\rm{The  sizes of the stable and unstable manifolds of the hyperbolic periodic orbits obtained in Theorem~\ref{thm.Liao-Gan-shadowing} have uniform size, depending only on $\lambda$. The stable and unstable manifolds are tangent to 
				(extended to a neighborhood of $\Lambda$)
				cone fields with respect to the bundles $E$ and $F$, respectively.
				For the  proofs of these assertions see  \cite[Corollary 3.3]{PujSam:06} and \cite[ Section 8.2]{AbdBonCro:11}.}}
	\end{Remark}
	
	\begin{Remark}\label{r.liaoganapplication}
		{\rm In this paper we will apply Theorem \ref{thm.Liao-Gan-shadowing} to  $f\in\ph^1_{c=1}(M)$. In this case $E=E^s$,  $F=E^c\oplus E^u$ or $E=E^s\oplus E^c$, $F=E^u$.  As $E^s$ and $E^u$ are uniformly contracting and expanding, thus we only need to check the condition for  the bundle $E^c$. 
		}
	\end{Remark}
	
	\subsection{$u$-Gibbs states and mostly expanding diffeomorphisms}
	\label{ss.Gibbsmostly}
	In this section, we recall some basic properties and results about $u$-Gibbs states and mostly expanding diffeomorphisms, which will be used 
	throughout the paper. Recall these definitions
	in the beginning of Section~\ref{ss.criterion}.
	
	Let us recall that an {\em{empirical measure}} is a probability measure of the form
	\begin{equation}
		\label{e.empirical}
		\epsilon_n(x) 
		\eqdef
		\frac{1}{n}\sum_{i=0}^{n-1}\delta_{f^i(x)}, \qquad {\mbox{$x\in M$ and $n\in \mathbb{N}$}.}  
	\end{equation}
	The $u$-Gibbs states are obtained as   the limit measures of  empirical measures of Lebesgue typical points  \cite{PesSin:82,BonDiaVia:05,Dol:01,CroYanZha:20,HuaYanYan:20}.

	\begin{Theorem}\label{thm.empirical}
		Let $f\in\ph^{1+\alpha}(M)$. Then for every $x\in M$ and for Lebesgue almost every point $y\in\cF^u(x)$, each 
		weak$\ast$-limit of the sequence of empirical measures $(\epsilon_n(y))$ is a u-Gibbs state.  
	\end{Theorem}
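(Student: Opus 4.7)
The plan is to follow the classical Pesin-Sinai strategy~\cite{PesSin:82}, whose core tool is bounded distortion of the unstable Jacobian, available because $f$ is $C^{1+\alpha}$: if $z, z'$ lie on the same local unstable leaf with $d^u(f^i z, f^i z') < r$ for $0 \le i \le n$, then
\[
\left| \log \det Df^n|_{E^u(z)} - \log \det Df^n|_{E^u(z')} \right| \le K \cdot d^u(f^n z, f^n z')^\alpha,
\]
with $K$ independent of $n$. This is precisely what forces the $C^{1+\alpha}$ hypothesis.

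First, I would establish the \emph{averaged} version. Fix $x\in M$, pick a small relatively compact unstable disk $D\subset\cF^u(x)$ containing $x$, and let $\nu$ denote the normalized leafwise Lebesgue measure on $D$. Consider
\[
\mu_n \eqdef \frac{1}{n}\sum_{i=0}^{n-1} f^i_\ast \nu = \int_D \epsilon_n(y)\, d\nu(y).
\]
Each $f^i_\ast\nu$ is supported in the unstable disk $f^i(D)$ and its density relative to the leafwise Riemannian volume is proportional to $1/\det Df^i|_{E^u}\circ f^{-i}$. Bounded distortion implies that, restricted to any plaque of uniform diameter inside $f^i(D)$, this density is comparable to a constant with ratio independent of $i$. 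Taking a weak-$\ast$ subsequential limit $\mu_\infty$ and disintegrating along a measurable family of short unstable plaques, the conditional measures of $\mu_\infty$ are absolutely continuous with densities bounded above and below; hence $\mu_\infty \in \Gib(f)$.

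Second, I would upgrade this to the $\nu$-a.e.\ statement. Fix a countable dense family $\{\varphi_k\}\subset C(M)$ and let $I_k^+$ (resp.\ $I_k^-$) denote the supremum (resp.\ infimum) of $\int\varphi_k\, d\mu$ over $\mu\in\Gib(f)$. Suppose for some $k$ and $\delta>0$ the set
\[
E_k^\delta = \bigl\{ y\in D : \textstyle\limsup_n \int\varphi_k\, d\epsilon_n(y) > I_k^+ + \delta \bigr\}
\]
had positive $\nu$-measure. At a Lebesgue density point $y_0 \in E_k^\delta$, extract a subsequence $n_j\to\infty$ along which a definite proportion of a shrinking unstable ball $B_j \subset D$ around $y_0$ lies in $E_k^\delta$ with $\tfrac{1}{n_j}\sum_{i<n_j}\varphi_k(f^i y) > I_k^+ + \delta/2$. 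Applying the first step to the normalized leafwise Lebesgue on $B_j$, and using bounded distortion to retain the $u$-Gibbs property under the substitution $D\rightsquigarrow B_j$, one obtains a weak-$\ast$ limit $\mu'_\infty \in \Gib(f)$ with $\int\varphi_k\, d\mu'_\infty \ge I_k^+ + \delta/2$, contradicting the definition of $I_k^+$. The symmetric argument for $\liminf$ yields the lower bound. Since $\Gib(f)$ is weak-$\ast$ closed and convex, the two-sided bounds across a dense family $\{\varphi_k\}$ force every accumulation point of $(\epsilon_n(y))_n$ to lie in $\Gib(f)$ for $\nu$-a.e.\ $y$. Finally, covering $\cF^u(x)$ by countably many such disks $D$ gives the statement for a.e.\ $y \in \cF^u(x)$.

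The main obstacle is the second step. The subtle point is that forward iteration along $E^u$ \emph{expands} distances, so nearby points in $D$ have orbits that separate along their unstable leaf, making direct orbit-comparison arguments unavailable. The resolution is to work at the level of the averaged Cesàro measures and their unstable-leaf disintegrations, where bounded distortion provides uniform-in-time control; a density-point argument then transports the averaged statement into the pointwise regime. The same mechanism underlies the refinements appearing in \cite{Dol:01,CroYanZha:20,HuaYanYan:20}.
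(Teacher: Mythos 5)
The paper does not prove Theorem~\ref{thm.empirical}; it is quoted from the literature with citations to \cite{PesSin:82,BonDiaVia:05,Dol:01,CroYanZha:20,HuaYanYan:20}, so there is no in-paper argument to match your proposal against. Your first step --- the averaged Pesin--Sinai argument via bounded distortion, showing that any weak$\ast$ limit of $\mu_n = \frac{1}{n}\sum_{i<n} f^i_\ast\nu_D$ lies in $\Gib(f)$ --- is correct and standard.

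Your second step, the upgrade from the averaged to the pointwise statement, has a genuine gap. The set $E_k^\delta$ is defined via a $\limsup$, so for distinct $y\in E_k^\delta$ the ``bad'' times $n$ at which $\int\varphi_k\,d\epsilon_n(y)>I_k^++\delta$ differ from point to point. The density-point property of $y_0$ gives that a large fraction of $B_j$ lies in $E_k^\delta$, but it does not produce a positive-proportion subset of $B_j$ sharing a \emph{common} bad time $n_j$, which is what your subsequent argument uses. If instead you enforce the common time by taking $B_j$ to be the exponentially small plaque of generation $n_j$ containing $y_0$ (on which $\epsilon_{n_j}$ is nearly constant because all forward iterates $f^i(B_j)$, $0\le i\le n_j$, have diameter bounded by that of $f^{n_j}(B_j)$), then a direct computation shows
\[
\int\varphi_k\,d\Bigl(\tfrac{1}{n_j}\sum_{i<n_j}f^i_\ast\nu_{B_j}\Bigr) \;=\; \tfrac{1}{n_j}\sum_{i<n_j}\varphi_k(f^iy_0) \;+\; o(1),
\]
so the Cesàro average over $B_j$ at time $n_j$ is weak$\ast$ close to $\epsilon_{n_j}(y_0)$ itself, and asserting it limits into $\Gib(f)$ is exactly the statement you set out to prove. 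The first step only controls $\frac{1}{m}\sum_{i<m}f^i_\ast\nu_B$ for a \emph{fixed} disk $B$ as $m\to\infty$; the diagonal sequence with shrinking $B_j$ and $m=n_j$ falls outside its scope, because up to time $n_j$ the orbit of $B_j$ is still a family of tiny disks and bounded distortion gives no $u$-Gibbs structure yet. To close the gap one needs a genuinely different mechanism --- for instance a summable-in-$n$ measure estimate for the bad sets $\{y: d(\epsilon_n(y),\Gib(f))>\varepsilon\}$ followed by Borel--Cantelli, or an argument via absolute continuity of a $u$-Gibbs state against leafwise Lebesgue combined with the pointwise ergodic theorem and $u$-saturation of the basins, or the unstable-entropy criterion used in \cite{CroYanZha:20,HuaYanYan:20}.
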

	
	\begin{Remark}
		{\em{	Theorem~\ref{thm.empirical} implies that every $\mathcal{F}^u$-saturated and $f$-invariant compact set has u-Gibbs states. }}
	\end{Remark}

	\begin{Remark}\label{r.C1-u-Gibbs}
		\em{Theorem \ref{thm.empirical} holds in $C^1$-setting if one replaces the $u$-Gibbs state by measures satisfying the unstable  entropy formula 
			$$
			h_{\mu}(f,\cF^u)=\int\log\det(Df|_{E^u})\ud\mu,
			$$ 
			See Theorem C in \cite{CroYanZha:20} and Theorem A in \cite{HuaYanYan:20}.}
		We observe that in the $C^1$ setting, any weak$\ast$-limit of empirical measures satisfies the unstable entropy formula.
	\end{Remark}

	The following result states some basic properties of $u$-Gibbs states whose proof can be found in    \cite[Section 11.2]{BonDiaVia:05}.
	\begin{Lemma}\label{l.basic-u-gibbs}
		Let $f\in\ph^{1+\alpha}(M)$ and $\mu\in \Gib(f)$. Then 
		\begin{itemize}
			\item  every ergodic component of $\mu$ is also a u-Gibbs state;
			\item the support of $\mu$ is saturated by strong unstable leaves.
		\end{itemize}
	\end{Lemma}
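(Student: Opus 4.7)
The plan is to treat the two items separately, the common thread being that a $u$-Gibbs state is completely described, on any measurable partition $\xi$ subordinate to $\cF^u$, by conditional measures with a strictly positive density given by an explicit Jacobian formula along $E^u$.

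For item (i), I would fix a Rokhlin measurable partition $\xi$ subordinate to $\cF^u$: each atom $\xi(x)$ is contained in the leaf $\cF^u(x)$, contains a neighborhood of $x$ inside that leaf, and $\xi$ has countably generated $\sigma$-algebra. The defining $u$-Gibbs property is precisely that the conditional measures $\mu_x^\xi$ are absolutely continuous with respect to the induced leafwise Lebesgue measure on $\xi(x)$, with a density $\rho_x$ that is strictly positive and uniformly log-bounded, given by the standard product formula involving the cocycle $\det(Df|_{E^u})$. I would then invoke the ergodic decomposition $\mu=\int\mu_\omega\,d\tau(\omega)$ and use the standard fact that disintegration commutes with a further disintegration into ergodic components. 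This yields, for $\tau$-a.e.\ $\omega$, that $(\mu_\omega)_x^\xi$ is absolutely continuous with the same density formula, so each $\mu_\omega$ is itself a $u$-Gibbs state.

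For item (ii), the positivity and continuity of the density $\rho_x$ on $\xi(x)$ imply that $\supp(\mu_x^\xi)=\overline{\xi(x)}$ for $\mu$-a.e.\ $x$. Consequently, for every $x\in\supp(\mu)$ that is $\xi$-generic, the local unstable plaque through $x$ is contained in $\supp(\mu)$. Since $\supp(\mu)$ is closed and $f$-invariant, and forward $f$-iteration of local plaques exhausts the entire leaf $\cF^u(x)$, we upgrade local saturation to global saturation: $\cF^u(x)\subset\supp(\mu)$ for every $x\in\supp(\mu)$.

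The main technical subtlety lies in item (i), namely in justifying that Rokhlin disintegration with respect to $\xi$ commutes with the ergodic decomposition. This is addressed by choosing $\xi$ so that $\bigvee_{n\geq 0}f^n\xi$ refines the $f$-invariant $\sigma$-algebra modulo $\mu$-null sets, after which the identification of conditional densities for $\mu$ and for $\mu_\omega$ is standard. All remaining steps reduce to the explicit form of the $u$-Gibbs density and the elementary observation that $\supp(\mu)$ is closed and $f$-invariant.
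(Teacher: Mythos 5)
The paper does not give its own proof of this lemma; it cites \cite[Section 11.2]{BonDiaVia:05}. Your overall architecture --- disintegrating with respect to a subordinate partition $\xi$, arguing that the ergodic decomposition is compatible with this disintegration, and using positivity of the conditional density to obtain $u$-saturation of the support --- is the standard route and essentially matches the cited reference. Item (ii) is handled correctly, modulo a routine appeal to continuity of the strong unstable plaque family when passing from ``$\mu$-a.e.\ $x$'' to ``every $x\in\supp\mu$'' via a limiting argument.

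The gap is in your justification of item (i), which you yourself flag as the crux. You write that the commutation of Rokhlin disintegration with the ergodic decomposition ``is addressed by choosing $\xi$ so that $\bigvee_{n\geq0}f^n\xi$ refines the $f$-invariant $\sigma$-algebra.'' There are two problems here. First, under the standard convention for subordinate partitions, $f^n\xi$ becomes coarser for $n\geq 0$ (forward iteration expands the plaques), so $\bigvee_{n\geq 0}\sigma(f^n\xi)=\sigma(\xi)$; the condition as written collapses to ``$\sigma(\xi)\supset\mathcal{I}$ mod $\mu$-null sets,'' but this is not something you can arrange by judicious choice of $\xi$. Second, and more importantly, the property that $\xi$-atoms sit inside single ergodic components (mod $\mu$) is not a choice but a consequence of the dynamics and the $u$-Gibbs hypothesis, and you should prove it rather than posit it: for $\mu$-a.e.\ $x$, forward and backward Birkhoff averages coincide (Birkhoff for the invariant $\mu$); backward Birkhoff averages of a continuous observable are constant along unstable plaques because $f^{-1}$ contracts $\mathcal{F}^u$; and the strict positivity of the conditional density $\rho_x$ on $\xi(x)$ guarantees that ``$\mu^\xi_x$-a.e.\ $y\in\xi(x)$'' coincides with ``Lebesgue-a.e.\ $y\in\xi(x)$,'' so the agreement of Birkhoff averages propagates to conditional-measure-full sets of $\xi(x)$. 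Only with this chain in place can you conclude that $\sigma(\xi)\supset\mathcal{I}$ mod null, and then the identification of $\mu^\xi_x$ with the conditional measure of the ergodic component $\mu_\omega$ through $x$ follows. Without it, the commutation you invoke is unsubstantiated, because in general a measurable partition need not refine the invariant $\sigma$-algebra.
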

	
	The mostly expanding property is in fact a $C^1$-open property in the following sense: if $f\in \ph^{1+\alpha}(M)$ is mostly expanding then there is a $C^1$-neighborhood $\mathcal{U}$ of $f$ such that every $g\in \mathcal{U} \cap
	\Diff^{1+\alpha}(M)$ is partially hyperbolic and  mostly 
	expanding.  

	\begin{Proposition}[\cite{AndVas:18,Yan:21}]\label{p.bound-of-LE}
		Let $f\in\ph^{1+\alpha}(M)$ be a mostly expanding diffeomorphism. Then there exist a $C^1$-neighborhood $\cU$ of $f$, a constant $c>0$, and an integer  $n_0\in\mathbb{N}$ such that 
		\[
		\int\log\|Dg^{-n_0}|_{E^c_g}\|\ud\mu<-c  
		\quad 
		\mbox{for every $g\in\cU\cap \diff^{1+\alpha}(M)$ and $\mu\in \Gib (g)$},
		\]
		where $E^c_g$ denotes the center bundle of $g$.
	\end{Proposition}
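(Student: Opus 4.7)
The plan is to combine a compactness argument on $\Gib(f)$ with the upper semi-continuity of the correspondence $g\mapsto \Gib(g)$ under $C^1$-perturbations.

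First I would produce a uniform positive lower bound on the center Lyapunov exponent over $\Gib(f)$. Since $f$ is mostly expanding and, by Lemma~\ref{l.basic-u-gibbs}, every ergodic component of $\mu\in\Gib(f)$ is again a $u$-Gibbs state, every such component has strictly positive center Lyapunov exponent; integrating over the ergodic decomposition gives $\lambda^c(\mu)>0$ for all $\mu\in\Gib(f)$. As $E^c_f$ depends continuously on the base point, the map $\mu\mapsto\lambda^c(\mu)=\int\log\|Df|_{E^c_f}\|\,\ud\mu$ is weak-$\ast$ continuous, and $\Gib(f)$ is weak-$\ast$-compact; hence there exists $c_0>0$ with $\lambda^c(\mu)\geq c_0$ for every $\mu\in\Gib(f)$. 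Since $\dim E^c_f=1$ and $\mu$ is $f$-invariant, $\int\log\|Df^{-n}|_{E^c_f}\|\,\ud\mu=-n\,\lambda^c(\mu)\leq -nc_0$ for every $n\geq 1$.

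Next I would promote this to a $C^1$-neighborhood of $f$ by contradiction. Fix any $n_0\in\NN$ and set $c=\tfrac{3}{4}n_0c_0$. If no such neighborhood existed, there would be $g_k\to f$ in the $C^1$ topology with $g_k\in\diff^{1+\alpha}(M)$ (automatically partially hyperbolic for $k$ large) and measures $\mu_k\in\Gib(g_k)$ satisfying $\int\log\|Dg_k^{-n_0}|_{E^c_{g_k}}\|\,\ud\mu_k\geq -c$. Passing to a weak-$\ast$-convergent subsequence, let $\mu_k\to\mu$. The crucial step is that $\mu\in\Gib(f)$: this is the upper semi-continuity of the $u$-Gibbs correspondence under $C^1$-perturbations, established in \cite[Section~11.2]{BonDiaVia:05}. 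It relies on the strong unstable plaques of $g_k$ converging in the $C^1$ topology (on compact pieces) to those of $f$, so that absolute continuity of the conditionals passes to the limit.

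Finally, since the center bundles $E^c_{g_k}$ depend continuously on $g$ in the $C^1$ topology (continuity of dominated splittings) and $Dg_k\to Df$ uniformly, the continuous functions $\log\|Dg_k^{-n_0}|_{E^c_{g_k}}\|$ converge uniformly to $\log\|Df^{-n_0}|_{E^c_f}\|$. Combining this uniform convergence with the weak-$\ast$ convergence of the measures gives
\[
\int\log\|Df^{-n_0}|_{E^c_f}\|\,\ud\mu=\lim_{k\to\infty}\int\log\|Dg_k^{-n_0}|_{E^c_{g_k}}\|\,\ud\mu_k\geq -c=-\tfrac{3}{4}n_0c_0,
\]
contradicting the bound $-n_0c_0$ obtained in the first step. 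The main obstacle is the upper semi-continuity of $\Gib(\cdot)$: one must know that a weak-$\ast$ limit of $u$-Gibbs states for $C^1$-nearby diffeomorphisms is still a $u$-Gibbs state for the limit. This is the one place where the $C^{1+\alpha}$ regularity of each $g_k$ and the $C^1$-continuity of the strong unstable foliation are essential, since they are what give the uniform control on the densities of the unstable conditionals required to pass absolute continuity to the limit.
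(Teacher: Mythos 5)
The paper attributes this proposition to \cite{AndVas:18,Yan:21} and does not reprove it, so there is no in-paper argument to compare against; I can only assess the blind proposal, which has a real gap.

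Your Step~1 silently restricts to $\dim E^c=1$: the identity $\int\log\|Df^{-n}|_{E^c}\|\,\ud\mu=-n\lambda^c(\mu)$, and the consequent fact that $n_0=1$ already works, both rely on $E^c$ being a line so that $Df^{-n}|_{E^c}$ is a scalar cocycle. But the proposition is stated for $f\in\ph^{1+\alpha}(M)$, not $\ph^{1+\alpha}_{c=1}(M)$, and the phrase ``all center Lyapunov exponents'' in the definition of mostly expanding is precisely there to allow $\dim E^c>1$. In that generality $\log\|Df^{-n}|_{E^c}\|$ is only subadditive, and for each $\mu\in\Gib(f)$ the subadditive ergodic theorem gives $\tfrac1n\int\log\|Df^{-n}|_{E^c}\|\,\ud\mu\to -(\text{smallest center exponent of }\mu)<0$ only in the limit; one must then combine the weak-$\ast$ continuity of each $\mu\mapsto\int\log\|Df^{-n}|_{E^c}\|\,\ud\mu$ with compactness of $\Gib(f)$ (a standard but nontrivial uniformization of a family of subadditive sequences) to extract a single $n_0$ valid for all $\mu$. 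This is the missing ingredient, and it is precisely what the otherwise-gratuitous $n_0$ in the statement is signalling. In the one-dimensional center case, which is all that the rest of this paper actually uses, your Step~1 is correct and $n_0=1$ does work.

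A secondary point: your justification of the crucial step $\mu\in\Gib(f)$ -- ``uniform control on the densities of the unstable conditionals'' -- does not quite hold up. Since $g_k\to f$ only in the $C^1$ topology, the H\"older norms of $Dg_k$ (hence the density bounds for the unstable conditionals of $\mu_k\in\Gib(g_k)$) need not be uniformly bounded, and the direct density-passing argument is not available. The upper semi-continuity of $g\mapsto\Gib(g)$ under $C^1$-perturbations inside $\diff^{1+\alpha}(M)$ is nevertheless true, but the correct mechanism is the unstable entropy characterization referenced in Remark~\ref{r.C1-u-Gibbs}: one always has $h_\mu(g,\cF^u_g)\le\int\log\det(Dg|_{E^u_g})\,\ud\mu$, with equality iff $\mu\in\Gib(g)$ when $g\in\diff^{1+\alpha}(M)$; the right-hand side is continuous in $(g,\mu)$ and the left-hand side is upper semi-continuous, so the equality passes to the limit. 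Your overall contradiction scheme is sound, but this key step should go through entropy, not densities.
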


	We close this section with a result that will be used in Sections~\ref{ss.proofda},~\ref{ss.thm.geodesic-flow-case}, and~\ref{ss.proofsk}.

	\begin{Theorem}[Theorem D in \cite{Yan:21}]~\label{thm.mostly-expanding-criterion}
		Suppose that $f\in\ph_{\mathrm{Leb}, c=1}^{1+\alpha}(M)$ is accessible
		and satisfies $\int\log\|Df|_{E^c}\|\ud{\mathrm{Leb}}>0$. Then $f$ is mostly expanding.
	\end{Theorem}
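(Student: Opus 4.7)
The plan is a proof by contradiction that leverages the special role of Lebesgue measure within the set of $u$-Gibbs states. First I would invoke the stable ergodicity theorem of Burns--Wilkinson \cite{BurWil:10} (see also \cite{RodRodUre:08b}): an accessible, volume-preserving, $C^{1+\alpha}$ partially hyperbolic diffeomorphism with one-dimensional center is ergodic with respect to $\mathrm{Leb}$. Together with the hypothesis $\int\log\|Df|_{E^c}\|\,\ud\mathrm{Leb}>0$ and Birkhoff's theorem, this yields $\lambda^c(x)=\lambda^c(\mathrm{Leb})>0$ for $\mathrm{Leb}$-a.e. $x$, and in particular $\mathrm{Leb}(B(\mathrm{Leb}))=1$, where $B(\nu)$ denotes the basin of an ergodic measure $\nu$. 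Notice also that $\mathrm{Leb}\in\Gib(f)$, since the disintegration of $\mathrm{Leb}$ along the smooth foliation $\cF^u$ is absolutely continuous on leaves.

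Suppose, for contradiction, that $f$ is not mostly expanding. By Lemma~\ref{l.basic-u-gibbs}, applied to the ergodic decomposition of a $u$-Gibbs state violating the mostly expanding condition, there is an ergodic $\mu\in\Gib(f)$ with $\lambda^c(\mu)\le 0$. The easier case is $\lambda^c(\mu)<0$: then $\mu$ is a hyperbolic measure whose Pesin stable direction is $E^s\oplus E^c$ and whose Pesin unstable direction is $E^u$, which coincides with the smooth strong unstable bundle. The $u$-Gibbs property therefore says that the Pesin unstable conditionals of $\mu$ are absolutely continuous, so $\mu$ is an SRB measure. Combining Pesin absolute continuity of the $W^{cs}$-lamination with the $u$-Gibbs disintegration along $\cF^u$ in a local product neighborhood (the standard computation carried out in \cite[Chapter 11]{BonDiaVia:05}), one concludes that $B(\mu)$ has positive Lebesgue measure. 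However $\lambda^c(\mu)<0<\lambda^c(\mathrm{Leb})$ forces $B(\mu)\cap B(\mathrm{Leb})=\emptyset$, contradicting $\mathrm{Leb}(B(\mathrm{Leb}))=1$.

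The main obstacle is the neutral case $\lambda^c(\mu)=0$, where no Pesin center-stable manifolds are available and the above basin argument breaks down. My plan is to use accessibility together with Theorem~\ref{thm.empirical} to reduce this to the hyperbolic case. More precisely, I would try to show that if there exists an ergodic $u$-Gibbs state $\mu$ with $\lambda^c(\mu)=0$, then either a perturbation of $\mu$ within the convex compact set $\Gib(f)$, obtained by pushing empirical averages along $su$-paths connecting $\supp\mu$ to Lebesgue-generic points, produces an ergodic $u$-Gibbs state with strictly negative center exponent (reducing to the previous paragraph), or accessibility forces the disintegrations of $\mu$ along $\cF^u$ to be $\cF^s$-holonomy invariant in a way that promotes $\mu$ to be absolutely continuous with respect to $\mathrm{Leb}$. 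In the latter case the ergodicity of $\mathrm{Leb}$ would yield $\mu=\mathrm{Leb}$, contradicting $\lambda^c(\mu)=0\ne\lambda^c(\mathrm{Leb})$. The delicate technical point is the invariance principle step: one must show that zero center exponent, together with accessibility and the $u$-Gibbs property, rigidly constrains the $su$-holonomy on the measure-theoretic disintegrations of $\mu$, in the spirit of the Avila--Viana--Wilkinson invariance principles. This is where I expect the main difficulty to lie.
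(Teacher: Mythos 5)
The statement you are proving is borrowed verbatim from Yang's paper, explicitly cited as ``Theorem D in \cite{Yan:21}''; the present paper supplies no proof of its own, so there is nothing internal to compare your argument against. I will therefore evaluate the proposal on its own merits.

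Your treatment of the case $\lambda^c(\mu)<0$ is sound and follows a classical route: since the center exponent is strictly negative, the Pesin unstable bundle of $\mu$ coincides with $E^u$, the $u$-Gibbs property promotes $\mu$ to a hyperbolic SRB measure, and absolute continuity of the Pesin stable lamination yields $\mathrm{Leb}(B(\mu))>0$. Combined with the Burns--Wilkinson ergodicity of $\mathrm{Leb}$ and $\lambda^c(\mathrm{Leb})>0$ this gives two disjoint basins of positive volume inside an ergodic system, a genuine contradiction.

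The case $\lambda^c(\mu)=0$ is, however, a real gap that you explicitly acknowledge but do not close, and it is precisely where the theorem has content. When the center exponent vanishes the Pesin unstable bundle is still $E^u$, so $\mu$ is formally SRB in the Ledrappier--Young sense, yet no Pesin center-stable manifolds exist and the basin argument collapses: a non-hyperbolic SRB measure need not attract a set of positive Lebesgue measure, so there is no immediate conflict with $\mathrm{Leb}(B(\mathrm{Leb}))=1$. Your proposed remedies are two sketches, not arguments. The first --- ``perturb $\mu$ inside $\Gib(f)$ by pushing empirical averages along $su$-paths to obtain a $u$-Gibbs state with strictly negative exponent'' --- does not describe an operation on measures: empirical averages along an orbit are not modified by following an $su$-path from its initial point, and $\Gib(f)$ is not closed under such ad hoc deformations; it is entirely possible that every ergodic $u$-Gibbs state has $\lambda^c\ge 0$ with equality attained, so this reduction is not available. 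The second --- invoking an Avila--Viana--Wilkinson-type invariance principle to force $su$-holonomy invariance and then $\mu=\mathrm{Leb}$ --- is the right family of ideas, but the standard invariance principles are formulated for the disintegration along compact (or at least well-behaved) center leaves and do not apply off the shelf here: one must explain what object the $su$-holonomies act on, why accessibility upgrades holonomy invariance of $u$-conditionals to absolute continuity of $\mu$, and why the measurable center lamination is sufficiently regular for the conclusion to hold. Until that step is carried out the proof is incomplete, and this is exactly the technical heart of Theorem D in \cite{Yan:21}, which proceeds through the machinery of entropy along expanding foliations rather than the basin dichotomy you propose.
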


	\subsection{Good periodic approximations}
	\label{ss.goodperiodic}
	
	Next result is a key proposition to prove Theorem~\ref{thm.existence}, it allows to find a sequence of periodic orbits satisfying the GIKN criterion. We emphasize that the periodic orbits are obtained by 
	Theorem~\ref{thm.Liao-Gan-shadowing} and there is no  perturbation involved. 
	We essentially follow \cite{Goretal:05}, replacing the existence of a periodic orbit with positive center Lyapunov exponent
	by the mostly expanding property. 
	Somewhat similar ideas can be found in \cite{BonZha:19}.
	
	Given $p\in \mathrm{Per}(f)$ in a partially hyperbolic set with one-dimensional center,
	we denote by $\lambda^c(p)$ the Lyapunov exponent of $\delta_{\mathcal{O}_p}$ associated to the center direction.
	
	\begin{Proposition}\label{main.prop}
		Let $f\in\diff^{1+\alpha}(M)$  and $\Lambda$ be an invariant  compact set such that 
		\begin{itemize}
			\item  $\Lambda$ is partially hyperbolic with $\dim(E^c)=1$;
			\item  $\Lambda$ is a $\mathcal{F}^u$-minimal; 
			\item   $\Lambda$ is mostly expanding;  and
			\item  $\Lambda$ contains some $p\in\mathrm{Per}(f)$ with $\lambda^c(p)<0$.
		\end{itemize}
		Then there exist $\rho>0$ and $\eta\in(0,1)$ such that for every $q\in\Lambda\cap \mathrm{Per}(f)$ with $\lambda^c(q)<0$ and for every $\e>0$, there is $p\in\Lambda\cap \mathrm{Per}(f)$ such that 	
		\begin{itemize}
			\item $\eta\cdot \lambda^c(q)<\lambda^c(p)<0;$
			\item the orbit $\cO_p$ is $\big(\e, 1-\rho\cdot |\lambda^c(q)|\big)$-good for $\cO_q$.
		\end{itemize}
	\end{Proposition}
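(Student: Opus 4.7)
My plan is to produce $\cO_p$ as a Liao--Gan shadow (Theorem~\ref{thm.Liao-Gan-shadowing}) of an orbit segment that first follows $\cO_q$ in backward time from a carefully chosen point $z$ on its local strong unstable manifold, and then runs forward along the orbit of $z$ until returning near $q$. The first phase provides center contraction and the second phase provides controlled expansion by the mostly expanding hypothesis. Using Proposition~\ref{p.bound-of-LE}, the mostly expanding property yields a uniform constant $c'>0$ such that $\lambda^c(\mu)\ge c'$ for every $\mu\in\Gib(f)$ supported on $\Lambda$; I set $\eta\eqdef 1/2$ and $\rho\eqdef 2/c'$, the constants of the statement.

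Fix $q$ of period $m$ with $\lambda^c(q)<0$ and $\varepsilon>0$. By Theorem~\ref{thm.empirical} and the mostly expanding property, Lebesgue-a.e.\ $z\in\cF^u_{\mathrm{loc}}(q)$ has empirical measures subconverging to a $u$-Gibbs state $\mu_z$ with $b\eqdef\lambda^c(\mu_z)\ge c'$. Since $\supp\mu_z$ is $\cF^u$-saturated by Lemma~\ref{l.basic-u-gibbs} and contained in the $\cF^u$-minimal set $\Lambda$, we have $\supp\mu_z=\Lambda\ni q$. Hence, for any $\delta>0$ I can pick $z\in\cF^u_{\mathrm{loc}}(q)$ with $\ud(z,q)$ arbitrarily small and an arbitrarily large $T$ satisfying $\ud(f^T(z),q)<\delta$ and $\bigl|\tfrac1T\sum_{i=0}^{T-1}\log\|Df|_{E^c(f^i(z))}\|-b\bigr|<\delta$. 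I then pick $K\in\NN$ so that the weighted average $\overline\lambda\eqdef(Km\lambda^c(q)+Tb)/(Km+T)$ lies inside $(\eta\lambda^c(q),0)$ with margin; this amounts to $Km|\lambda^c(q)|\approx Tb$, and the admissible range of $K$ is nonempty once $T$ is large enough.

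The pseudo-orbit is $y_i\eqdef f^{i-Km}(z)$ for $i=0,\ldots,N$ with $N\eqdef Km+T$. Since $z\in\cF^u_{\mathrm{loc}}(q)$ and $Km$ is a multiple of $m$, backward contraction along $\cF^u$ yields $\ud(y_i,f^i(q))\le C\lambda_u^{Km-i}\ud(z,q)$ for $i\in[0,Km]$, so the first $Km$ iterates shadow $\cO_q$ within $C\ud(z,q)$; the closure distance is $\ud(y_0,y_N)=O(\ud(z,q)+\delta)$; and by H\"older continuity of $Df|_{E^c}$ the cumulative center log-derivative along $\{y_0,\ldots,y_{N-1}\}$ is $N\overline\lambda+O(\delta+\ud(z,q)^\alpha)$. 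To apply Theorem~\ref{thm.Liao-Gan-shadowing} with $E=E^{cs}$ and $F=E^u$, the suffix condition is automatic from uniform $E^u$-expansion, but the prefix condition may fail inside the expanding phase. I repair this via a Pliss-type cyclic shift: letting $j^*$ maximize $\sum_{i=0}^{j-1}\log\|Df|_{E^{cs}(y_i)}\|-j\lambda$ with $\lambda\eqdef\tfrac12\overline\lambda$, starting the segment at $y_{j^*}$ makes every prefix sum satisfy the required inequality, since the total sum is $\le N\lambda$. The closure distance of the shifted segment is $\le (\mathrm{Lip}\,f)^{j^*}\ud(y_0,y_N)$, which I keep below Liao--Gan's threshold $d_0(\lambda)$ by choosing $\ud(z,q)$ and $\delta$ exponentially small in $N$.

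Theorem~\ref{thm.Liao-Gan-shadowing} then produces a hyperbolic periodic point $p$ of period $N$ with stable index $2$ (hence $\lambda^c(p)<0$), whose orbit is $L\cdot d$-close to the pseudo-orbit; continuity of $Df|_{E^c}$ gives $\lambda^c(p)=\overline\lambda+O(d^\alpha)\in(\eta\lambda^c(q),0)$, and a standard $\cF^u$-saturation argument using the minimality of $\Lambda$ places $\cO_p$ inside $\Lambda$. The $(\varepsilon,1-\rho|\lambda^c(q)|)$-good approximation property follows by taking the subset of $\cO_p$ corresponding to the $Km$ shadowing iterates of $\cO_q$, together with the natural nearest-iterate assignment; the ratio of cardinalities is $Km/N\ge 1-|\lambda^c(q)|/c'\ge 1-\rho|\lambda^c(q)|$, using $Km|\lambda^c(q)|\approx Tb\ge Tc'$. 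The hard part is verifying Liao--Gan's prefix condition despite the expanding phase: the Pliss-type cyclic shift is the conceptual fix, but coordinating $\ud(z,q)$, $\delta$, $T$, $K$ so that the shifted closure lies below $d_0(\lambda)$---which depends on $\lambda$, hence implicitly on $\lambda^c(q)$---while the good-approximation ratio still reaches $1-\rho|\lambda^c(q)|$ is the main bookkeeping challenge.
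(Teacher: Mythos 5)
Your overall strategy is the same as the paper's: Liao--Gan shadow a pseudo-orbit that first tracks $\cO_q$ (center-contracting phase) and then follows an orbit with $u$-Gibbs statistics (center-expanding phase). But the paper's proof differs in two ways that turn out to be essential, and your version has a genuine gap in the prefix-condition step.

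First, the decomposition. The paper uses a \emph{three}-phase pseudo-orbit: $m$ iterates near $\cO_q$, then $n$ iterates tracking a fixed $u$-Gibbs reference orbit, then a transit of \emph{fixed length} $N_d$ that returns $d/2$-close to $q$ by $\cF^u$-minimality. The length $N_d$ depends only on the mesh $d$, not on anything else. You instead use a \emph{two}-phase pseudo-orbit where the expanding leg $\{z,\dots,f^T(z)\}$ must itself land $\delta$-close to $q$; this forces $T$ to be a recurrence time of $z$, tying the length of the expanding phase to the closure precision $\delta$. This coupling is precisely what makes your closure estimate circular later.

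Second, and this is the gap: the paper chooses $m/n$ of size about $(2\|\varphi\|_0-\lambda^c(q))/|\lambda^c(q)|$ --- see \eqref{eq:choice-of-mn} --- so the contracting phase is long enough to absorb the \emph{worst-case} per-iterate contribution $\|\varphi\|_0$ from the expanding phase. This makes the Liao--Gan prefix condition $\frac1k\sum_{i<k}\varphi\le\lambda^c(q)/2$ hold for \emph{every} intermediate $k$ without any shift (Equations \eqref{eq:LE-first-m-iterates}--\eqref{eq:LE-final-bound}). You instead choose $Km/T\approx b/|\lambda^c(q)|$ with $b\approx\lambda^c(\mu_z)$, which is much smaller, because $b\le\|\varphi\|_0$ (usually strictly). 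With your ratio, the worst-case prefix average at an intermediate $k=Km+s$ is $\frac{Km\lambda^c(q)+s\|\varphi\|_0}{Km+s}$, which can be positive well before $s=T$ --- the prefix condition genuinely fails (the average over the \emph{full} expanding phase is near $b$, but nothing controls the sub-averages). Your Pliss cyclic shift is intended to repair this, but it is not a legitimate fix here: a cyclic shift moves the pseudo-orbit's jump into the interior, and to present the result as a true orbit segment you must push the jump back to the end, which amplifies the closure by $(\mathrm{Lip}\,f)^{j^*}$. You need $j^*$ to be controlled, but with your short contracting phase $j^*$ can be a definite fraction of $N=Km+T$. You then need $d(y_0,y_N)\lesssim d_0(\mathrm{Lip}\,f)^{-N}$; however $d(y_0,y_N)\gtrsim\delta$ (the recurrence mesh) and $T=T(\delta)$ grows as $\delta\to 0$, so $N$ grows with the demanded precision. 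Whether $\delta < d_0(\mathrm{Lip}\,f)^{-N(\delta)}$ admits a solution depends on the specific growth rates and is not established; this is not a bookkeeping nuisance but a potential circularity. The paper sidesteps the entire issue: the closure is bounded by $d$ directly (Equation \eqref{eq:periodic-pseudo}), with no amplification, precisely because no shift is used and the return is handled by the fixed-length transit.

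To fix your proof, adopt the paper's ratio $Km/T\gtrsim(2\|\varphi\|_0-\lambda^c(q))/|\lambda^c(q)|$ so the prefix condition holds directly, drop the Pliss shift, and decouple the return from the Gibbs-tracking by using the $\cF^u$-minimality transit of fixed length $N_d$. This also changes your constants to $\rho=\tfrac{1}{2\|\varphi\|_0}$ and $\eta=\tfrac{2\|\varphi\|_0-\tau_0}{2\|\varphi\|_0}$, matching the paper.
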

	
	Before proving this proposition, we recall some facts about homoclinic classes.
	Given a diffeomorphism $f$, two  points $p, q\in  \mathrm{Per}_{\mathrm{hyp}}(f)$  are 
	\emph{homoclinically related} if the stable manifold of the orbit $\cO_p$ has non-empty transverse intersection with the unstable manifold of the orbit $\cO_q$, and the stable manifold of $\cO_q$ has non-empty transverse intersection with the unstable manifold of $\cO_p.$ The \emph{homoclinic class}  of $p$ is defined as 
	\[
	H(p,f) \eqdef \mathrm{closure} \Big( \big\{\textrm{$q\in \mathrm{Per}_{\mathrm{hyp}}(f)$: $q$ is homoclinically related with $p$} \big\} \Big). 
	\]
	The homoclinic class of $p$ can alternatively defined as closure of the transverse intersection
	of the invariant manifolds of the orbit of $p$. The following is an standard property of homoclinic classes in our setting:
	
	\begin{Remark}\label{r.good-approximation-in-homoclinic-class}
		Let $f\in\ph^1_{c=1}(M)$  and  $H(p,f)$ be a homoclinic class. Then for any $\varepsilon_1>0$, $\varepsilon_2>0$, $\varepsilon_3>0$, 
		and $\kappa\in(0,1)$ and any periodic point $q$ homoclinically  related to $p$, there exists a periodic point $q^\prime$ homoclinically related to $p$ such that  
		\begin{itemize}
			\item $\cO_{q^\prime}$ is $(\varepsilon_1,\kappa)$-good for $\cO_q$; 			
			\item $\cO_{q^\prime}$  is $\varepsilon_2$-dense in $H(p,f)$; 
			\item $|\lambda^c(q)-\lambda^c(q^\prime)|<\varepsilon_3.$ 
		\end{itemize} 
	\end{Remark}
	
	Consider $\Lambda$ as in Proposition~\ref{main.prop}.
	Next remark is a standard consequence of the fact that $\Lambda$ is $\mathcal{F}^u$-minimal 
	and partial hyperbolic. Note the latter  hypothesis implies the existence of invariant 
	cone fields defined on a neighborhood of $\Lambda$.

	\begin{Remark}
		\label{r.intersectionstun}
		{\em{Under the hypotheses of Proposition~\ref{main.prop}, as the angle between $E^{cs}$ and $E^u$ is uniformly bounded  from below, there are $r_0>0$ and $Df^{-1}$-invariant cone field $\mathcal{C}^{cs}$ around $E^{cs}$,
				defined on a neighborhood of $\Lambda$, 
				such that for every $r\in (0, r_0)$ there is $\delta>0$ 
				such that for every $x\in \Lambda$ the leaf $\mathcal{F}^u(x)$ transversely intersects
				any local manifold tangent to $\mathcal{C}^{cs}$
				of inner radius $r$  
				centered at any point
				$\delta$-close to
				$\Lambda$.}}
	\end{Remark}

	\begin{Lemma}
		\label{l.homorelated}
		Under the hypotheses of Proposition~\ref{main.prop}, every pair of periodic points  $p,q\in \Lambda$ with negative Lyapunov exponents
		are homoclinically related.
	\end{Lemma}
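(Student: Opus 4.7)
The plan is to exhibit both transverse intersections required for homoclinic relatedness by combining the $\cF^u$-minimality of $\Lambda$ with the geometric transversality furnished by Remark~\ref{r.intersectionstun}. The key observation is that a hyperbolic periodic point in $\Lambda$ with negative center exponent has unstable manifold tangent only to $E^u$, and hence coinciding with the strong unstable leaf through it.

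First I would note that since $\lambda^c(p),\lambda^c(q)<0$, along the orbits $\cO_p$ and $\cO_q$ the bundle $E^{cs}=E^s\oplus E^c$ is uniformly contracting while $E^u$ is uniformly expanding. By the stable/unstable manifold theorem for hyperbolic periodic orbits, $W^u(p)$ has dimension $\dim(E^u)$ and is tangent to $E^u$ at $p$; hence $W^u(p)=\cF^u(p)$, and analogously $W^u(q)=\cF^u(q)$. The stable manifolds $W^s(p)$, $W^s(q)$ are $C^1$-tangent to $E^{cs}$ at $p$, $q$ respectively, so by shrinking the local stable discs if necessary I can assume that their tangent spaces lie inside the $Df^{-1}$-invariant cone field $\mathcal{C}^{cs}$ around $E^{cs}$ provided by Remark~\ref{r.intersectionstun}. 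I then fix some $r\in(0,r_0)$ for which both $W^s_{\mathrm{loc}}(p)$ and $W^s_{\mathrm{loc}}(q)$ contain discs of inner radius $r$ centered at $p$ and $q$, and let $\delta>0$ be the associated constant coming from Remark~\ref{r.intersectionstun}.

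Second, by the $\cF^u$-minimality of $\Lambda$, the leaf $\cF^u(p)$ is dense in $\Lambda$ and in particular meets the $\delta$-neighborhood of $q$. Remark~\ref{r.intersectionstun} then produces an intersection point of $\cF^u(p)=W^u(p)$ with $W^s_{\mathrm{loc}}(q)\subset W^s(\cO_q)$, and this intersection is transverse since the tangent spaces lie respectively in $E^u$ and in $\mathcal{C}^{cs}$, while $E^u\oplus E^{cs}=TM$ on a neighborhood of $\Lambda$ and $\mathcal{C}^{cs}$ is a narrow cone around $E^{cs}$ disjoint from $E^u$. Interchanging the roles of $p$ and $q$, the density of $\cF^u(q)$ near $p$ gives a transverse intersection of $W^u(\cO_q)$ with $W^s(\cO_p)$. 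Combining both, $p$ and $q$ are homoclinically related.

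The argument does not present a genuine obstacle: the only step requiring a little care is the inclusion of the local stable discs in the cone field $\mathcal{C}^{cs}$, which is automatic from $C^1$-tangency to $E^{cs}$ at the periodic point. In particular, no Pesin-theoretic machinery and no perturbation of the dynamics is required, in line with the perturbation-free philosophy of Section~\ref{s.construction}.
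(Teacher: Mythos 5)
Your proof is correct and follows essentially the same route as the paper: identify $W^u(p)$ with the dense leaf $\cF^u(p)$ (using $\lambda^c(p)<0$ and $\dim E^c=1$), apply Remark~\ref{r.intersectionstun} to get the transverse intersection with $W^s(q)$, and swap the roles of $p$ and $q$. The only difference is that you spell out the cone-field tangency of the local stable discs in more detail, which the paper leaves implicit.
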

	
	\begin{proof}
		Note that  $\lambda^c(p)<0$ and $\dim (E^c)=1$ imply that
		$W^u (p) = \mathcal{F}^u (p)$ and hence it is dense in $\Lambda$. 
		By Remark~\ref{r.intersectionstun},
		one has that $W^u(p)$ transversely
		intersects $W^s(q)$. Changing the roles of $p$ and $q$ we get that $W^u(q)$ transversely intersects $W^s(p)$, proving that the periodic points are
		homoclinically related.
	\end{proof}

	\begin{Lemma}
		\label{l.homoclass}
		Under the hypotheses of Proposition~\ref{main.prop}, for every $p\in \mathrm{Per}(f)\cap \Lambda$
		with $\lambda^c(p)<0$, it holds
		$\Lambda= H(p,f)$. 
	\end{Lemma}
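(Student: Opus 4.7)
My plan is to prove the two inclusions separately. The containment $H(p,f) \subseteq \Lambda$ is immediate: since $\lambda^c(p) < 0$ and $\dim E^c = 1$, the unstable manifold of $\cO_p$ is tangent to $E^u$ and coincides with $\bigcup_i \cF^u(f^i(p))$, which lies in the $\cF^u$-saturated closed set $\Lambda$. For any periodic $q$ homoclinically related to $p$, picking $z \in W^s(\cO_q) \cap W^u(\cO_p)$, one has $z \in \Lambda$ and $f^n(z) \to \cO_q$, so $\cO_q \subseteq \Lambda$; taking closure yields $H(p,f) \subseteq \Lambda$.

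For the reverse inclusion, $\cF^u$-minimality and $p \in \Lambda$ give $\Lambda = \overline{\cF^u(p)}$, so it suffices to exhibit periodic points homoclinically related to $p$ densely in $\cF^u(p)$. The key intermediate step is to produce at least one transverse homoclinic intersection off $\cO_p$. The non-compact leaf $\cF^u(p)$, being dense in $\Lambda \ni p$ inside the compact manifold $M$, must return to every neighborhood of $p$ through points $z_n \notin W^u_{\mathrm{loc}}(p)$ (otherwise the local arc $W^u_{\mathrm{loc}}(p)$ would exhaust $\cF^u(p)$ in a neighborhood of $p$, contradicting its non-compactness). The strong unstable plaque through such a $z_n$ is nearly tangent to $E^u$ by continuity of the bundles, while $W^s_{\mathrm{loc}}(p)$ is tangent to $E^{cs}_p$; invoking the transversality built into Remark~\ref{r.intersectionstun}, for $z_n$ sufficiently close to $p$ this plaque crosses $W^s_{\mathrm{loc}}(p)$ transversely at a point $z_n' \in \cF^u(p) \cap W^s_{\mathrm{loc}}(p)$, producing a transverse homoclinic intersection distinct from $\cO_p$.

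With one transverse homoclinic intersection in hand, the Birkhoff--Smale theorem supplies a horseshoe containing periodic orbits homoclinically related to $p$. Applying the inclination ($\lambda$-)lemma to a small disk $D \subset W^s_{\mathrm{loc}}(p)$ through $z_n'$, the images $f^n(D) \subset W^s(\cO_p)$ accumulate $C^1$-densely on $W^u(\cO_p) = \cF^u(\cO_p)$; hence transverse homoclinic intersections, and consequently (again by Birkhoff--Smale) periodic points homoclinically related to $p$, appear in every neighborhood of every point of $\cF^u(\cO_p)$. Taking closures yields $\Lambda = \overline{\cF^u(p)} \subseteq H(p,f)$. The principal delicacy is establishing the distant return $z_n$ needed to kick off the argument; once that is done, everything else is standard hyperbolic machinery.
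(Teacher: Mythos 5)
Your overall strategy --- find one transverse homoclinic point, then propagate it by Birkhoff--Smale and the inclination lemma --- is reasonable in spirit, but both pivotal steps contain genuine gaps.

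\textbf{The ``distant return'' step.} You justify the existence of points $z_n \in \cF^u(p)\setminus W^u_{\mathrm{loc}}(p)$ accumulating at $p$ by appealing to the non-compactness of $\cF^u(p)$. This does not follow. Non-compactness of the leaf only says that it extends indefinitely as an immersed disk; it does not force the leaf to re-enter a fixed neighborhood of $p$ through plaques other than $W^u_{\mathrm{loc}}(p)$. A priori $\Lambda$ could be locally a $u$-disk near $p$ without violating non-compactness, so the parenthetical ``contradicting its non-compactness'' does not close the argument. Establishing that distant returns actually occur requires using $\cF^u$-minimality quantitatively --- exactly what the paper does by expanding small $u$-disks until they cross $W^s(p)$.

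\textbf{The inclination-lemma step.} You take a disk $D\subset W^s_{\mathrm{loc}}(p)$ through the homoclinic point $z_n'$ and assert that $f^n(D)$ accumulates $C^1$-densely on $W^u(\cO_p)$. This is false: for a disk \emph{inside} the stable manifold, the forward iterates $f^n(D)\subset W^s(\cO_p)$ contract to the periodic orbit $\cO_p$; they do not spread along $W^u(\cO_p)$. The inclination lemma applies to disks \emph{transverse} to $W^s(\cO_p)$, not to subsets of it. Even a corrected application (a $u$-disk transverse to $W^s(p)$ at $z_n'$, iterated forward) would only give $C^1$-accumulation on $W^u_{\mathrm{loc}}(\cO_p)$, which does not by itself produce transverse homoclinic points densely in $\Lambda$; to get density in $\Lambda$ one must again invoke $\cF^u$-minimality and uniform expansion, which is exactly the content of the paper's argument.

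The paper sidesteps both issues by arguing directly: fix $x\in\Lambda$ and a small $u$-disk $\Delta\subset\cF^u(x)$ through $x$; iterate until $f^k(\Delta)$ is a large $u$-disk, which must cross $W^s(p)$ by $\cF^u$-minimality and Remark~\ref{r.intersectionstun}; deduce $\Delta\pitchfork W^s(p)$; then use density of $W^u(p)$ in $\Lambda$ to replace $\Delta$ by a nearby plaque of $W^u(p)$ that still crosses $W^s(p)$, giving a transverse homoclinic point arbitrarily close to $x$. This shows $x\in H(p,f)$ without ever needing to first isolate a single homoclinic intersection, and it is both simpler and correct. Your first inclusion $H(p,f)\subseteq\Lambda$ is fine and matches the paper.
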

	
	\begin{proof}
		Fix $p\in \mathrm{Per}(f)\cap \Lambda$
		with $\lambda^c(p)<0$. 
		To prove the inclusion $H(p,f) \subset \Lambda$, 
		note that
		$\lambda^c(p)<0$ implies  $W^u (p) = \mathcal{F}^u (p)$. Thus it holds $H(p,f) \subset \overline{W^u(p)} \subset \Lambda$.
		
		For the inclusion $\Lambda \subset H(p,f)$, fix any $x\in \Lambda$ and  consider any small disk $\Delta$ of dimension
		$\dim (E^u)$ with
		$\Delta \subset \mathcal{F}^u(x)$ containing $x$ in its interior.
		We claim that $\Delta \pitchfork W^s(p)$. For simplicity of notation, let us assume that $p$ is a fixed point of $f$.
		Observe that the $\mathcal{F}^u$-minimality of $\Lambda$ implies that every sufficiently large $u$-disk transversely intersects 
		$W^s(p)$. By the uniform expansion of $f$ along unstable leaves, there is $k>0$ such $f^k(\Delta)$ is large
		enough and
		hence $f^k(\Delta) \pitchfork W^s(p)$
		and therefore $W^s(p) \pitchfork \Delta$. By the $\mathcal{F}^u$-minimality, $W^u(p)$ accumulates on $\Delta$. The partially hyperbolic structure implies
		that there  exists $y\in W^s(p) \pitchfork W^u(p)$ close to $\Delta$ and hence close to $x$. As this can be done with arbitrarily
		small disks $\Delta$, it follows that $x$ is accumulated by transverse homoclinic points of $p$ and  hence 
		belongs to $H(p)$.
	\end{proof}
	
	After these auxiliary results, we are ready to prove Proposition~\ref{main.prop}.
	
	\subsubsection{Proof of Proposition~\ref{main.prop}}
	\label{sss.proofmainprop}

	The periodic point $p$ in the proposition is obtained using Theorem~\ref{thm.Liao-Gan-shadowing}.
	Note that by Remark~\ref{r.liaoganapplication} we only need to check conditions (2) and (3) of that theorem.
	We now go to the details.

	Partial hyperbolicity of $\Lambda$ implies that there is a small neighborhood $U$ of $\Lambda$ such that the maximal invariant $K$ of $f$ in $U$ is partially hyperbolic with one dimensional center.
	Consider the continuous map
	\begin{equation}
		\label{e.defvarfi}
		\varphi \colon K\to\mathbb{R}, \quad \varphi(x)  \eqdef \log\|Df|_{E^c(x)}\|
	\end{equation}
	and  let 
	$$
	\|\varphi\|_{0}=\sup_{x\in K}|\varphi(x)|.
	$$ 
	By the uniform continuity of $\varphi$, given any $\delta>0$ there exists $\xi>0$
	\begin{equation} 
		\label{e.delta}
		|\varphi(x)-\varphi(y)|<\delta \quad \mbox{for every $x,y\in K$ with $\ud(x,y)<\xi$}.
	\end{equation}
	In the following, we will shrink $\delta$.  
	
	Given any invariant measure $\mu$ supported on $K$ consider its (averaged) center Lyapunov exponent 
	$\lambda^c(\mu)$ as in \eqref{e.averageexp}.
	As $\Lambda$ is mostly expanding, Proposition~\ref{p.bound-of-LE} implies that
	\begin{equation}\label{eq:choice-of-tau0}
		\tau_0  \eqdef \frac{1}{2}\cdot\inf\big\{\lambda^c(\mu) \colon \textrm{$\mu \in \mathrm{Gib}^u(\Lambda)$
		} \big\}>0.
	\end{equation}
	
	Let $q\in\Lambda \cap \mathrm{Per}(f)$  with $\lambda^c(q)<0$.  
	Note that
	\begin{equation}
		\label{e.laphi}
		|\lambda^c(q)| \le ||\varphi||_0.
	\end{equation}
	For simplicity, let us assume that $q$ is a fixed point of $f$.   Take $\e>0$ small such that  $B_\e(\Lambda)\subset U.$
	By shrinking $\delta$, we can assume that $4 \delta< \min\{ |\lambda^c(q)|, \tau_0\}$.
	
	Let $L>1$ and $d_0>0$ be the numbers given by  Theorem~\ref{thm.Liao-Gan-shadowing}   for $\frac{\lambda^c(q)}{2}$ and the dominated splitting $T_KM=(E^s\oplus E^c)\oplus  E^u$. Take 
	\begin{equation}\label{eq:choice-of-d}
		0<d<\min\big\{d_0, \xi/(L+1),\e/(L+1)\big\}.
	\end{equation}
	
	By the minimality of the lamination $\cF^{u}$  in $\Lambda$ and the uniform expansion of $f$ along the foliation $\cF^{u}$, there is an integer $N_d\in\NN$ such that $f^{N_d}(\cF^{u}_{d/4}(z))$ is $d/2$-dense in $\Lambda$ for every $z\in \Lambda$, where 
	$\cF^{u}_{\epsilon}(y)$ denotes the ball of radius $\epsilon$ centered at $y$ in $\cF^u(y)$.
	
	By Theorem~\ref{thm.empirical}, there is a point $x\in \cF^u_{d/4}(q)$ such that every weak$*$-limit of the  empirical measures $(\epsilon_n(x))$ defined in \eqref{e.empirical}
	are u-Gibbs states.  
	\[
	\liminf_{n\rightarrow\infty}\frac{1}{n}\sum_{i=0}^{n-1}\varphi(f^i(x))\geq2\tau_0.
	\] 
	By the choice of $\delta$, there is  $N_0\in\mathbb{N}$ such that
	\begin{equation}\label{eq:lower-bound-by-tau0}
		\frac{1}{n}\sum_{i=0}^{n-1}\varphi(f^i(x))>\tau_0+\delta, \,\textrm{for any $n\geq N_0$ }.
	\end{equation}
	
	By the choice of $N_d$, for every $n \in \mathbb{N}$   there is 
	$$
	\mbox{$z_n\in \cF^{u}_{d/4}(f^n(x))$ such that $\ud(f^{N_d}(z_n), q)<d/2.$ }
	$$
	Given any $m\in\mathbb{N}$, let  
	$$
	w_{m,n} \eqdef f^{-n-m}(z_n).
	$$ 
	By the uniform contraction of $f^{-1}$ along the strong unstable foliation and
	since  $x\in \cF^u_{d/4}(q)$, one has  $w_{m,n}\in \cF^{u}_{d/2}(q)$. Thus, recalling the choice of $z_n$, 
	\begin{equation}\label{eq:periodic-pseudo}
		\ud\big(w_{m,n}, f^{m+n+N_d}(w_{m,n})\big)
		=  \ud\big(w_{m,n}, f^{N_d}(z_{n})\big)
		<d.
	\end{equation}
	
	\begin{Claim-numbered}\label{c.time-controll}
		Given any $\delta\in\big(0,\frac{1}{4}  \min\big\{ |\lambda^c(q)|, \tau_0\big\}\big)$ small,	there exist $m,n\in\NN$ arbitrarily large and  a periodic point $p=
		p_{m,n}\in K$ such that  
		\begin{itemize}
			\item  $p$ has period $m+n+N_d$;
			\item  $\cO_p$ is $\left(\e,1+\dfrac{\lambda^c(q)}{2\|\varphi\|_{0}}\right)$-good for $\cO_q$;
			\item $\lambda^c(q) \cdot \dfrac{2\|\varphi\|_{0}-\tau_0 }{2\|\varphi\|_{0}}<\lambda^c(p)<\dfrac{\lambda^c(q)}{2}.$
		\end{itemize}
	\end{Claim-numbered}
	\proof We will firstly require that $n>N_0.$
	We now estimate the Birkhoff averages of $\varphi$ along the orbit of $w_{m,n}$.
	First, by the uniform expansion of $f$ along the foliation $\cF^{u}$, we can split the orbit segment 
	$\{w_{m,n}, f(w_{m,n}), \dots, f^{m+n-1} (w_{m,n})\}$ as follows
	\begin{equation}
		\label{e.wmnq}
		\begin{split}
			f^{i}(w_{m,n})&= f^{-n-m+i}(z_n) \in\cF^{u}_{d/4}(q) \quad \mbox{for every $0\leq i\leq m-1$};\\
			f^{i}(w_{m,n})&= f^{-n} (f^{i-m}(z_n))
			\in\cF^{u}_{d/4}(f^{i-m}(x)) \quad 
			\mbox{for every $m\leq i\leq m+n-1.$}
		\end{split}
	\end{equation}
	By the   choices of $d$ in Equation~\eqref{eq:choice-of-d} and $\xi$ in Equation~\eqref{e.delta}, for $0\leq k\leq m$ one has that 
	\begin{equation}\label{eq:LE-first-m-iterates}
		\left|\frac{1}{k}\sum_{i=0}^{k-1}\varphi(f^i(w_{m,n}))-\lambda^c(q)\right|<\delta.
	\end{equation}
	For  $m+1\leq k\leq m+n+N_d$, using \eqref{eq:LE-first-m-iterates}, one has that 
	\begin{equation}\label{eq:LE-till-end-upper}
		\sum_{i=0}^{k-1}\varphi(f^i(w_{m,n}))=\sum_{i=0}^{m-1}\varphi(f^i(w_{m,n}))+\sum_{i=m}^{k-1}\varphi(f^i(w_{m,n}))\leq m\cdot (\lambda^c(q)+\delta)+(k-m)\|\varphi\|_{0}.
	\end{equation}
	Moreover, by Equations~\eqref{eq:LE-first-m-iterates} and~\eqref{eq:lower-bound-by-tau0}
	(recall that $n> N_0$),  and the choice of $d$, one has 
	\begin{equation}\label{eq:LE-till-end-lower}
		\begin{split}
			\sum_{i=0}^{m+n+N_d-1}\varphi(f^i(w_{m,n}))
			&=\sum_{i=0}^{m-1}\varphi(f^i(w_{m,n}))+\sum_{i=m}^{m+n-1}\varphi(f^i(w_{m,n}))+\sum_{i=m+n}^{m+n+N_d-1}\varphi(f^i(w_{m,n}))
			\\
			&\geq m\cdot (\lambda^c(q)-\delta)+n\cdot \tau_0-N_d\cdot\|\varphi\|_{0}.
		\end{split}
	\end{equation}
	By Equation~\eqref{eq:LE-till-end-upper},  the following estimate holds for 
	every $m+1\leq k\leq m+n+N_d$,
	\begin{equation}\label{eq:LE-upper-bound-raw}
		\begin{split}
			\frac{1}{k}\sum_{i=0}^{k-1}\varphi(f^i(w_{m,n}))&\leq \frac{m}{k}\cdot (\lambda^c(q)+\delta)+\frac{(k-m)}{k}\|\varphi\|_{0}
			\\
			&=\|\varphi\|_{0}+\frac{m}{k}\cdot (\lambda^c(q)+\delta-\|\varphi\|_{0})
			\\
			&\leq \|\varphi\|_{0}+\frac{m}{m+n+N_d}\cdot (\lambda^c(q)+\delta-\|\varphi\|_{0})
			\\
			&= \frac{m}{m+n+N_d}\cdot (\lambda^c(q)+\delta)+\frac{n+N_d}{m+n+N_d}\|\varphi\|_{0}.
		\end{split}
	\end{equation}
	Thus for $m$ large enough (much larger than $N_d$), one has  
	\begin{equation}\label{eq:LE-pre-bound}
		\begin{split}
			\frac{1}{k}\sum_{i=0}^{k-1}\varphi(f^i(w_{m,n}))
			&\leq \frac{m}{m+n+N_d}\cdot (\lambda^c(q)+\delta)+\frac{n+N_d}{m+n+N_d}\|\varphi\|_{0}
			\\
			&\leq \frac{m}{m+n}\cdot \lambda^c(q) +\frac{n}{m+n}\|\varphi\|_{0}+\delta.
		\end{split}
	\end{equation}
	By taking $\delta>0$ small and $m,n$ large enough such that 
	\begin{equation}\label{eq:choice-of-mn}
		\frac{m}{n}\in\bigg(\frac{2\|\varphi\|_{0}+2\delta-\lambda^c(q)}{-\lambda^c(q)-2\delta},  \frac{2\|\varphi\|_{0}+3\delta-\lambda^c(q)}{-\lambda^c(q)-3\delta}\bigg),
	\end{equation}
	for every $m+1\leq k\leq m+n+N_d$, one has that
	\begin{equation}\label{eq:LE-final-bound}
		\begin{split}
			\frac{1}{k}\sum_{i=0}^{k-1}\varphi(f^i(w_{m,n}))  
			&\leq\frac{m/n}{m/n+1}\cdot \lambda^c(q) +\frac{1}{m/n+1}\|\varphi\|_{C^0}+\delta
			\\
			&=\frac{m/n}{m/n+1}\cdot \big(\lambda^c(q)-\|\varphi\|_{C^0}\big) +\|\varphi\|_{C^0}+\delta
			\\   
			&\leq  \frac{2\|\varphi\|_{C^0}+2\delta-\lambda^c(q)}{2\|\varphi\|_{C^0}+2\delta-\lambda^c(q)-\lambda^c(q)-2\delta}\cdot \big(\lambda^c(q)-\|\varphi\|_{C^0}\big) +\|\varphi\|_{C^0}+\delta
			\\
			&= \frac{\lambda^c(q)}{2}.
		\end{split}
	\end{equation}
	As $\delta<\frac{\lambda^c(q)}{4}$, combining with  Equations~\eqref{eq:LE-first-m-iterates} and \eqref{eq:LE-final-bound},  one gets that 
	\[ 	\frac{1}{k}\sum_{i=0}^{k-1}\varphi(f^i(w_{m,n}))  
	\leq\frac{\lambda^c(q)}{2}\quad\textrm{
		for every $1\le k \le m+n+N_d$}.\]  
	Combined with Equation \eqref{eq:periodic-pseudo}, by Theorem~\ref{thm.Liao-Gan-shadowing} and Remark~\ref{r.liaoganapplication}, there exists a periodic point $p$ of period $(m+n+N_d)$ such that 
	$$
	\ud(f^i(p),f^i(w_{m,n}))<L\cdot d, \textrm{ for $i=0,\cdots,m+n+N_d-1$}.
	$$
	Therefore using \eqref{e.wmnq} one has 
	\begin{itemize}
		\item	for $0\leq i\leq m-1\colon \ud(f^i(p),f^i(q))\leq \ud(f^i(p),f^i(w_{m,n}))+\ud(f^i(w_{m,n}),f^i(q))<(L+1)d;$
		\item  for  $m\leq i\leq m+n-1\colon \ud(f^i(p),f^i(x))\leq \ud(f^i(p),f^i(w_{m,n}))+\ud(f^i(w_{m,n}),f^i(x))<(L+1)d.$
	\end{itemize}
	
	We now estimate the center Lyapunov exponent of $p$.  Since  $(L+1)d\leq\xi$, recall  \eqref{eq:choice-of-d}, and $m$ can be chosen arbitrarily large, by the choice of $\xi$, following the lines of  the same estimate as in Equations~\eqref{eq:LE-first-m-iterates}, ~\eqref{eq:LE-till-end-upper}, \eqref{eq:LE-upper-bound-raw}, \eqref{eq:LE-pre-bound} and \eqref{eq:LE-final-bound}, one has  the upper bound  of the center Lyapunov exponent of $p$, 
	\begin{align*}
		\lambda^c(p)&=\frac{1}{m+n+N_d}\sum_{i=0}^{m+n+N_d-1}\varphi(f^i(p))
		\\
		&\leq \frac{m}{m+n+N_d}\cdot (\lambda^c(q)+\delta)+\frac{n+N_d}{m+n+N_d}\|\varphi\|_{0}
		\\
		&\leq\frac{m/n}{m/n+1}\cdot \lambda^c(q) +\frac{1}{m/n+1}\|\varphi\|_{0}+\delta
		\\
		&\leq \frac{\lambda^c(q)}{2}.
	\end{align*}
	A lower bound of the center Lyapunov exponent of $p$ is obtained following the estimate in Equation~\eqref{eq:LE-till-end-lower},
	\begin{align*}
		\lambda^c(p)=\frac{1}{m+n+N_d}\sum_{i=0}^{m+n+N_d-1}\varphi(f^i(p))&\geq 	\frac{1}{m+n+N_d}\big(m(\lambda^c(q)-\delta)+n\tau_0-N_d\|\varphi\|_{0}\big)
		\\
		&	\geq \frac{m}{m+n}  \lambda^c(q) +\frac{n}{m+n}\tau_0-2\delta.
	\end{align*}
	Since, by  \eqref{eq:choice-of-mn},
	\[
	\frac{m}{n}< \frac{2\|\varphi\|_{0}+ 3\delta-\lambda^c(q)}{-\lambda^c(q)-3\delta},
	\]
	by choosing $\delta>0$ small,  one has 
	\begin{align*}
		\lambda^c(p)&\geq \frac{m}{m+n}  \lambda^c(q) +\frac{n}{m+n}\tau_0-2\delta
		\\
		&=\lambda^c(q) +\frac{n}{m+n}\big(\tau_0-\lambda^c(q)\big)-2\delta
		\\
		&\geq \lambda^c(q)+\frac{\big(\tau_0-\lambda^c(q)\big)\big(-\lambda^c(q)-3\delta\big)}{2\|\varphi\|_{C^0}-2\lambda^c(q)}-2\delta
		\\
		&\geq \lambda^c(q)+\frac{\tau_0\cdot (-\lambda^c(q))}{2\|\varphi\|_{0}}
		\\
		&= \lambda^c(q) \cdot \frac{2\|\varphi\|_{0}-\tau_0 }{2\|\varphi\|_{0}}.
	\end{align*} 
	This completes the estimates of the Lyapunov exponent of $p$ in the claim,

	We now estimate the proportion of time that the orbit segment stays close to the orbit of $q$. By Equation~\eqref{eq:choice-of-mn}, for $n$ large enough (much larger than $N_d$), one has 
	\begin{align*}
		\frac{m}{m+n+N_d}=	\frac{m/n}{m/n+1+N_d/n}&> \frac{2\|\varphi\|_{0}+ 2\delta-\lambda^c(q)}{2\|\varphi\|_{0}-2\lambda^c(q)+N_d(-\lambda^c(q)-2\delta)/n}
		\\
		&\geq \frac{2\|\varphi\|_{0}-\lambda^c(q)}{2\|\varphi\|_{0}-2\lambda^c(q)}
		\\
		&\geq 1+\frac{\lambda^c(q)}{2\|\varphi\|_{0}}.
	\end{align*}
	Since $(L+1)d<\e$, the periodic orbit $\cO_p$ is $\big(\e,1+\frac{\lambda^c(q)}{2\|\varphi\|_{0}}\big)$-good for $\cO_q$,
	proving the claim.
	\endproof 
	By  Claim~\ref{c.time-controll}, it suffices to take \[\rho=\frac{1}{2\|\varphi\|_{0}}\quad \textrm{and}\quad \eta=\frac{2\|\varphi\|_{0}-\tau_0 }{2\|\varphi\|_{0}}.\] 
	
	Now, it remains to show that $\cO_p$ is contained in $\Lambda$. To do this, we need to shrink $\e$. As $\cO_p$ is in the $\e$-neighborhood of $\Lambda$ it  is contained in the maximal invariant set  $K$ of $f$ in $U$ fixed in the beginning of the proof. 
	Recall that by Remark~\ref{r.liaogan}
	the stable manifold of $p$   has uniform size 
	(depending only on $\lambda^c(q)/2$). 
	As $p$ is $\e$-close to $q$ and their invariant manifolds
	have the same dimension, if $\e$ is small enough, $p$ and $q$ are homoclinically related,
	recall Remark~\ref{r.intersectionstun}. By the Inclination lemma,  one has 
	\[
	W^u(\cO_p)\subset \overline{W^u(\cO_q)}=\overline{\cF^u(\cO_q)} \subset \Lambda,
	\]
	where in the last inclusion  we use that
	$\Lambda$  is $\mathcal{F}^u$-minimal. Thus $\cO_p\subset\Lambda$, ending the proof of the proposition.
	\hfill \qed

	Now, we are ready to give the proof of Theorem~\ref{thm.existence}.
	
	\subsection{End of the Proof of Theorem~\ref{thm.existence}}
	\label{ss.endproofE}
	Recall that, by Lemma~\ref{l.homorelated}, every
	pair of periodic orbits in $\Lambda$ with negative center Lyapunov exponent are homoclinically related. 
	
	Let $\cO_p$ be a periodic orbit with negative center Lyapunov exponent. Let $\rho>0$ and $\eta\in(0,1)$ be the constants given by Proposition~\ref{main.prop}. Let $\e_n= 2^{-n}$. We will inductively construct a sequence of periodic orbits $\{\cO_{p_n}\}$ in $\Lambda$ such that
	\begin{itemize}
		\item $\cO_{p_{n+1}}$ is $\big(\e_{n},1-2\rho|\lambda^c(p_n)|\big)$-good for the periodic orbit $\cO_{p_n}$;
		\item  $\cO_{p_{n+1}}$ is $\e_n$-dense in $\Lambda.$
		\item  $\frac{1+\eta}{2}\cdot \lambda^c(p_{n})<\lambda^c(p_{n+1})<0.$
	\end{itemize} 
	Let $p_1=p$. Assume already constructed the periodic orbits $\cO_{p_1},\cdots,\cO_{p_n}$ satisfying the above properties. Applying  Proposition~\ref{main.prop} to  $\cO_{p_n}$ with $\e=\e_n/2$, 
	one gets a periodic orbit $\tilde\cO_{p_{n+1}}\subset\Lambda$ such that 
	\begin{itemize}
		\item $\tilde\cO_{p_{n+1}}$ is $\big(\e_{n}/2,1-\rho|\lambda^c(p_n)|\big)$-good for  $\cO_{p_n}$
		and
		\item  $\eta\cdot \lambda^c( p_{n})<\lambda^c(\tilde p_{n+1})<0.$
	\end{itemize} 
	
	By Lemma~\ref{l.homoclass}, the homoclinic class of $\tilde\cO_{p_{n+1}}$  coincides with $\Lambda$. By Remark~\ref{r.good-approximation-in-homoclinic-class},  there  exists a periodic orbits $\cO_{p_{n+1}}$ which is homoclinically related with  $\tilde\cO_{p_{n+1}}$ such that 
	\begin{itemize}
		\item $\cO_{p_{n+1}}$ is $\big(\e_{n}/2,1-\rho|\lambda^c(p_n)|\big)$-good for $\tilde\cO_{p_{n+1}}$;
		\item $\cO_{p_{n+1}}$ is $\e_n$-dense in $\Lambda$; and
		\item  $\lambda^c(\tilde p_{n+1})+(1-\eta)\lambda^c(p_n)/2<\lambda^c(p_{n+1})<0.$
	\end{itemize}
	This implies that 
	\begin{itemize}
		\item $\cO_{p_{n+1}}$ is $\big(\e_{n},1-2\rho|\lambda^c(p_n)|\big)$-good for  $\cO_{p_n}$;
		\item    $\cO_{p_{n+1}}$ is $\e_n$-dense in $\Lambda$; and
		\item  $\frac{1+\eta}{2}\lambda^c(p_{n})<\lambda^c(p_{n+1})<0.$
	\end{itemize} 
	By construction, one has that 
	$$
	|\lambda^c(p_{n})|<\left( \frac{1+\eta}{2} \right)^n\cdot|\lambda^c(p)|.
	$$ 
	Taking 
	\[
	\kappa_n \eqdef 1-2\rho|\lambda^c(p_n)|>1-2\rho\left(\frac{1+\eta}{2}\right)^n\cdot|\lambda^c(p)|
	\] 
	we get that the orbit $\mathcal{O}_{p_{n+1}}$ is $(\varepsilon_n, \kappa_n)$-good for  $\mathcal{O}_{p_{n}}$.
	
	It remains to prove the properties of the sequences $(\varepsilon_n)$ and $(\kappa_n)$
	Note that $\sum\e_n<\infty$ and $\prod\kappa_n\in(0,1]$.  By Theorem~\ref{thm.GIKN-criterion},  the sequence
	of periodic measures $\delta_{\cO_{p_n}}$ converges to an ergodic measure $\nu$ whose support is $\Lambda$.

	Note that $\lambda^c ( \delta_{\cO_{p_n}}) \to 0$. As $\dim(E^c)=1$, 
	By Remark~\ref{r.convergenceofexp}
	$\lambda^c(\nu)$ is zero and hence $\nu$ is nonhyperbolic.
	The proof of the theorem is now complete. 
	\hfill  \qed
	
	\begin{Remark}{\rm{
				Using Remark~\ref{r.C1-u-Gibbs}, one can replace the mostly expanding property on $\Lambda$ by   the property that 
				{\em{every measure satisfying the unstable entropy formula has positive center Lyapunov exponent}},  then  Proposition~\ref{p.intermediate} and Theorem~\ref{thm.existence} hold in $C^1$-setting.}}
	\end{Remark}

	\section{Hyperbolicity versus  nonhyperbolic measures}
	\label{s.hypversus}

	In this section, we prove Theorem~\ref{t.transversecondition}.
	For that we first state 
	the following dichotomy for 
	mostly expanding $\mathcal{F}^u$-minimal sets: 
	
	\begin{Proposition}\label{p.intermediate}
		Let $f\in\ph^{1+\alpha}_{c=1}(M)$ and  $\Lambda$ be an $f$-invariant and 
		$\mathcal{F}^u$-minimal set which is mostly expanding. 
		Then the followings dichotomy holds:
		\begin{itemize}
			\item either $\Lambda$ supports  some  nonhyperbolic  measure; 
			\item  or  $\Lambda$ is uniformly hyperbolic with uniformly expanding center. 
		\end{itemize}
	\end{Proposition}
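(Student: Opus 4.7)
The plan is to prove the contrapositive: assuming $\Lambda$ supports no nonhyperbolic ergodic measure, I will show that $E^c$ is uniformly expanding on $\Lambda$, which combined with the uniform contraction on $E^s$ yields the desired uniform hyperbolicity with expanding center. The engine of the argument is Theorem~\ref{thm.existence}: any periodic orbit in $\Lambda$ with negative center Lyapunov exponent would immediately produce a nonhyperbolic ergodic measure supported on $\Lambda$, contradicting the standing assumption. So the proof reduces to two steps: (a) every ergodic measure on $\Lambda$ has strictly positive center exponent, and (b) upgrade pointwise positivity to uniform expansion.

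For step (a), suppose $\mu$ is ergodic on $\Lambda$ with $\lambda^c(\mu)\le 0$. The case $\lambda^c(\mu)=0$ is excluded by assumption, so $\lambda^c(\mu)<0$; then $\mu$ is a hyperbolic ergodic measure since $\dim E^c=1$ and $E^s, E^u$ are uniformly hyperbolic. I would pick a $\mu$-generic recurrent point $x\in\supp\mu\subset\Lambda$, use Pliss' lemma on the Birkhoff averages of $\log\|Df|_{E^c}\|$ to extract an almost-closed orbit segment whose running averages stay below a fixed negative constant, and apply Liao's shadowing lemma (Theorem~\ref{thm.Liao-Gan-shadowing}) to the dominated splitting $E^{cs}\oplus E^u$. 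This produces a hyperbolic periodic point $p$ close to $\mathcal{O}_x$, with $\lambda^c(p)<0$ and with stable manifold $W^s(p)$ tangent to $E^{cs}$ of uniform size (Remark~\ref{r.liaogan}). To force $p\in\Lambda$, pick $x_0\in\Lambda$ close to $p$; by Remark~\ref{r.intersectionstun}, the leaf $\mathcal{F}^u(x_0)$ transversally meets $W^s_{\mathrm{loc}}(p)$ at some $z$; since $\mathcal{F}^u$-minimality makes $\Lambda$ strong-unstable saturated, $z\in\Lambda$, the forward iterates $f^n(z)\in\Lambda$ converge to $\mathcal{O}_p$, and closedness of $\Lambda$ forces $p\in\Lambda$. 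Theorem~\ref{thm.existence} then yields a nonhyperbolic measure, contradicting the assumption.

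For step (b), the map $\varphi(x)\eqdef\log\|Df|_{E^c(x)}\|$ is continuous on $\Lambda$ by partial hyperbolicity, so $\mu\mapsto\int\varphi\,d\mu$ is weak-$*$ continuous on the compact set $\mathfrak{M}_{\mathrm{inv}}(f|_\Lambda)$. Step (a) together with ergodic decomposition gives $c_0\eqdef\inf_\mu\int\varphi\,d\mu>0$. A standard compactness argument---any sequence $(x_k,n_k)$ with $\frac{1}{n_k}\sum_{i=0}^{n_k-1}\varphi(f^ix_k)\le c_0/2$ would have empirical measures weak-$*$ accumulating on an invariant measure violating the bound $c_0$---promotes this to $\frac{1}{n}\sum_{i=0}^{n-1}\varphi(f^ix)\ge c_0/2$ uniformly on $\Lambda$ for $n$ large, which is exactly uniform expansion of $E^c$. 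I expect the main obstacle to be step (a), and specifically the verification that the shadowing periodic orbit lies inside $\Lambda$; this crucially uses both the local product structure in Remark~\ref{r.intersectionstun} and the $\mathcal{F}^u$-saturation of $\Lambda$ inherited from $\mathcal{F}^u$-minimality.
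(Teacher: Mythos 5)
Your proof is correct and follows essentially the same structure as the paper's: reduce (via Theorem~\ref{thm.existence}) to showing that a hyperbolic ergodic measure with negative center exponent forces a periodic point with negative center exponent inside $\Lambda$, and then upgrade pointwise positivity of exponents to uniform center expansion via the Ma\~n\'e/Krylov--Bogolyubov compactness argument. The only deviation is at the level of technical tools in step (a): where you construct the periodic point by hand via Pliss' lemma and Liao's shadowing (Theorem~\ref{thm.Liao-Gan-shadowing}), the paper instead cites Katok-type horseshoe results \cite{Cro:11,Gel:16} for hyperbolic measures; and where you place $p$ inside $\Lambda$ by noting that $z\in\mathcal{F}^u(x_0)\cap W^s(p)\subset\Lambda$ has $\omega(z)=\mathcal{O}_p\subset\Lambda$, the paper establishes a two-sided transverse intersection with a generic point and applies the inclination lemma. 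Your version of that last step is slightly cleaner, but the arguments are interchangeable.
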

	
	We postpone the proof of this proposition to Section~\ref{ss.pintermediate} and prove Theorem~\ref{t.transversecondition} assuming it
	in Section~\ref{ss.transversecondition}.

	\subsection{Proof of Theorem~\ref{t.transversecondition}}\label{ss.transversecondition}
	
	We begin by discussing a transversality condition introduced in  \cite[Definition 1.1]{AviCroWil:},  see also \cite{CroPot:}.
	A local version of it can be found in  \cite[Definition~5.18]{CroPot:15}. 
	
	Let us recall some terminologies from \cite[Sections 1.2 and 1.3]{AviCroWil:}.
	Let $f\in \mathrm{PH}^1_{c=1}(M)$.
	Fix $\varepsilon_0>0$ small. A \emph{brush} through a point $x\in M$ is given by 
	\[{\rm Br}(x)=\bigcup_{y\in\cF^u_{\varepsilon_0}(x)}\cF^s_{\varepsilon_0}(y)\]
	which is a co-dimension one topological submanifold.  
	It has been proven in  \cite[Lemma 4.4]{AviCroWil:} that for every $\varepsilon\ll\varepsilon_0$, the set 
	\[{\rm U}_{\varepsilon}(x)=B(x,\varepsilon)\setminus{\rm Br}(x) \]
	has exactly two connected components, where $B(x,{\varepsilon})$ is the ball centered at $x$ of radius $\varepsilon$.    This allows to locally define the two sides of $x$. Two points $y_1,y_2\in{\rm U}_{\varepsilon}(x)$ \emph{lie on the same (or different) side(s) of ${\rm Br}(x)$} if they belong to  the same (or different) connected component(s) of ${\rm U}_{\varepsilon}(x)$. Note that  the brushes can be extended canonically along any strong unstable leaf. It is proved in \cite[Lemma 4.5]{AviCroWil:} that the notion of lying on the same side of ${\rm Br}(x)$ can be locally extended along any path $\gamma$ in $\cF^u(x)$, and this allows to define  $y_0\in {\rm U}_{\varepsilon}(\gamma(0))$ and $y_1\in {\rm U}_{\varepsilon}(\gamma(1))$ lying on the same (or different) side(s) of ${\rm Br}(\gamma)$. This gives rise to the notion of $y_0,y_1$ \emph{lying on the same (or different) side(s) of $\cF^u(x)$ relative to $\gamma$}. 
	See \cite[Section 4.2]{AviCroWil:} for more details. 
	
	\begin{Definition}[$S$-transversality]\label{d.transversecondition}
		Let $f\in \mathrm{PH}^1_{c=1}(M)$ be and $\Lambda$ be a $\cF^u$-saturated set of $f$. 
		The set $\Lambda$ is \emph{$s$-transverse}, 
		if for any $\tau>0$ small enough and any point $x\in\Lambda$, there are paths $\gamma_1,\gamma_2:[0,1]\to\cF^u(x)$ such that 
		\begin{itemize}
			\item
			$d(\gamma_1(t),\gamma_2(t))< \tau$ for any $t\in[0,1];$ 
			\item $\gamma_2(0)\in {\rm U}_{\tau}(\gamma_1(0))$ and $\gamma_2(1)\in{\rm U}_\tau(\gamma_1(1))$, but $\gamma_2(0)$ and $\gamma_2(1)$ lie on different sides of $\cF^u(x)$ relative to $\gamma_1$.	 
		\end{itemize}
	\end{Definition}

	Checking the transversality condition is in general a difficult task.
	Next result illustrates its relevance.
	

\begin{Theorem}[Theorem C in \cite{AviCroWil:}] \label{r.transversality}
	{\em{Let $f\in \mathrm{PH}^{1+\alpha}_{c=1}(M)$ and $\Lambda$ be $\cF^u$-saturated and $f$-invariant set. 
			If $\Lambda$ is $s$-transverse and satisfies the SH condition\footnote{This condition was introduced in \cite{PujSam:06} as an ingredient to get minimality of strong foliations replacing the use of blender horseshoes in \cite{BonDiaUre:02}.} (the latter being verified if the center is uniformly expanding), then 	$\Lambda$ contains a $cu$-disk.}} 
\end{Theorem}

To prove the theorem we start by recalling an important consequence of accessibility:

\begin{Theorem}[\cite{BurWil:10,RodRodUre:08}]
	\label{t.thenitisergodic}
	Let   $f\in\ph_{\mathrm{Leb},c=1}^{1+\alpha}(M) $ be accessible. Then $\mathrm{Leb}$ is ergodic.
\end{Theorem}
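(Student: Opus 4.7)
The plan is to run the Hopf argument adapted to the partially hyperbolic setting via accessibility. Fix a continuous function $\varphi \colon M \to \mathbb{R}$; since continuous functions are dense in $L^1(\mathrm{Leb})$, it suffices to prove that the Birkhoff average of $\varphi$ is Lebesgue-essentially constant. By Birkhoff's theorem, the forward and backward time averages $\varphi^+$ and $\varphi^-$ exist Lebesgue-almost everywhere and agree on a full measure set. Write $\bar\varphi \eqdef \varphi^+ = \varphi^-$ on this set.

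Next I would invoke the classical Pesin--Brin absolute continuity of the strong stable and strong unstable foliations for $C^{1+\alpha}$ partially hyperbolic diffeomorphisms. By a standard Fubini-type argument, absolute continuity implies that $\varphi^+$ is constant almost everywhere along every $\mathcal{F}^s$-leaf, and $\varphi^-$ is constant almost everywhere along every $\mathcal{F}^u$-leaf. Combining with $\varphi^+ = \varphi^-$ a.e., we obtain that $\bar\varphi$ is \emph{essentially $su$-invariant}: it is constant almost everywhere along both strong stable and strong unstable leaves simultaneously.

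The key step, and the substance of the Burns--Wilkinson proof, is to upgrade essential $su$-invariance to constancy on $su$-accessibility classes. Accessibility asserts that the $su$-equivalence relation on $M$ is total, so once we prove this upgrade, $\bar\varphi$ is Lebesgue-essentially constant and ergodicity follows. The upgrade cannot be performed naively because a single $su$-path visits a measure-zero set and an a.e. statement is not preserved under such moves. To handle this, the plan is to use the notion of \emph{Juliene (quasi-conformal) density points} together with the center-bunching inequality, which is automatically satisfied when $\dim (E^c) = 1$ after passing to a suitable iterate and an adapted metric. The center-bunching hypothesis guarantees that the strong stable and strong unstable holonomies between center-transversals are absolutely continuous with controlled Jacobians; this in turn implies that the Juliene density points of an essentially $su$-invariant measurable set are preserved by $su$-holonomies. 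Iterating along an accessibility sequence then shows that $\bar\varphi$ is constant on each accessibility class, hence on $M$.

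The main obstacle is precisely this propagation step: translating "a.e.\ constant along leaves" into "constant on accessibility classes" requires the delicate Juliene-density machinery, whose viability hinges on center-bunching. For the one-dimensional center case treated in \cite{RodRodUre:08}, there is an alternative route that bypasses some of this technology by constructing \emph{fake} center-stable and center-unstable foliations à la Burns--Wilkinson and exploiting that their holonomies are Lipschitz; this yields enough regularity to transport a.e.\ invariance along $su$-paths directly. Either implementation produces the conclusion that $\bar\varphi$ is Lebesgue-essentially constant, so $\mathrm{Leb}$ is ergodic, as claimed.
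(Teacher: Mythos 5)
The paper offers no proof of this statement: it is quoted verbatim from \cite{BurWil:10,RodRodUre:08}, and your outline is a faithful summary of exactly the strategy of those sources (Hopf argument, absolute continuity of the strong foliations, propagation of essential $su$-invariance along accessibility classes via julienne density points, with center bunching automatic for one-dimensional center). Your sketch correctly isolates the genuinely hard step --- upgrading ``a.e.\ constant along leaves'' to ``constant on accessibility classes'' --- and defers it to the Burns--Wilkinson machinery, which is the appropriate level of detail for a borrowed theorem; there is nothing to compare against in the paper itself.
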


\proof[End of the proof of Theorem~\ref{t.transversecondition}] Note that, by Theorem~\ref{thm.mostly-expanding-criterion}, accessibility of $f$ and  $\lambda^c(\mathrm{Leb})>0$
imply that $f$ is mostly expanding. Hence the hypotheses of Proposition~\ref{p.intermediate} are satisfied. Thus to prove
the theorem it remains to check that if $\Lambda$ is uniformly hyperbolic with an uniformly expanding center then it is Anosov.
Consider the $cu$-disk $\Delta \subset \Lambda$ provided by Remark~\ref{r.transversality}. It is enough to prove that the
orbit of $\Delta$ is dense in the ambient space and hence $\Lambda=M$.

Take a point $z$ in the  interior of $\Delta$ and observe that   there exist $\varepsilon>0$ and  $\delta>0$ such that
if $d(x,z)< \delta$ then $\mathcal{F}^s_\varepsilon (x)$ transversely intersects $\Delta$. 
Since $f$ is accessible, by Theorem~\ref{t.thenitisergodic} it is ergodic and hence transitive. Take $y$ with a forward dense orbit. 
Note that every point in the strong stable manifold of $y$ also has a forward dense orbit.
Note that there is $n\in\mathbb{N}$ such that $d(f^n(y),z)< \delta$ and hence $\mathcal{F}^s_\varepsilon (f^n (y)) \cap \Delta) \ne \emptyset$.
This implies that the forward orbit of $\Delta$ is also dense, proving the assertion.
\endproof

\subsection{Proof of Proposition~\ref{p.intermediate}}\label{ss.pintermediate}
We start with a preliminary result:

\begin{Lemma}\label{l.signalofexponents}
	If the set $\Lambda$ supports an ergodic measure $\nu$ with $\lambda^c(\nu)<0$,  then it supports a nonhyperbolic measure.
\end{Lemma}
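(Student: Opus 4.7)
The plan is to verify the hypotheses of Theorem~\ref{thm.existence} for $\Lambda$. Partial hyperbolicity with one-dimensional center, $\cF^u$-minimality, and the mostly expanding property are part of the standing assumptions of Proposition~\ref{p.intermediate}; hence it suffices to exhibit a periodic point in $\Lambda$ with negative center Lyapunov exponent, and Theorem~\ref{thm.existence} will then deliver the desired nonhyperbolic ergodic measure supported on $\Lambda$.

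I would first observe that $\nu$ is a hyperbolic measure: its Oseledets exponents along directions contained in $E^s$ and $E^u$ are respectively negative and positive by partial hyperbolicity, while the center exponent is $\lambda^c(\nu)<0$ by hypothesis. Standard periodic approximation for hyperbolic ergodic measures of $C^{1+\alpha}$ diffeomorphisms (Katok's closing lemma, or equivalently a direct application of Theorem~\ref{thm.Liao-Gan-shadowing} to a Pliss point along a $\nu$-generic, Birkhoff-recurrent orbit, in the spirit of Section~\ref{ss.goodperiodic}) produces, for every $\varepsilon>0$, a hyperbolic periodic point $p_\varepsilon$ of stable index $\dim E^s + 1$ with $d(p_\varepsilon,\supp\nu)<\varepsilon$ and $\lambda^c(p_\varepsilon)$ arbitrarily close to $\lambda^c(\nu)$, in particular negative.

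The main technical step, which I expect to be the principal obstacle, is to upgrade \emph{close to $\Lambda$} to \emph{inside $\Lambda$}. Because $\lambda^c(p_\varepsilon)<0$ and $\dim E^c=1$, the stable manifold $W^s(p_\varepsilon)$ is tangent to $E^{cs}=E^s\oplus E^c$; by Remark~\ref{r.liaogan} its inner radius is bounded below by a constant that depends only on $\lambda^c(\nu)$. Taking $\varepsilon$ smaller than the number $\delta$ provided by Remark~\ref{r.intersectionstun}, $W^s(p_\varepsilon)$ is then transverse to and meets $\cF^u(x)$ for every $x\in\Lambda$. Fixing such an $x$ and an intersection point $y\in W^s(p_\varepsilon)\cap\cF^u(x)$, one has $y\in\cF^u(x)\subset\Lambda$ (since $\Lambda$ is $\cF^u$-saturated, being $\cF^u$-minimal) and $f^n(y)\to p_\varepsilon$. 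Closedness and $f$-invariance of $\Lambda$ then force $p_\varepsilon\in\Lambda$.

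With $p_\varepsilon\in\Lambda\cap \mathrm{Per}(f)$ satisfying $\lambda^c(p_\varepsilon)<0$, Theorem~\ref{thm.existence} applies and yields an ergodic measure supported on $\Lambda$ with vanishing center Lyapunov exponent, finishing the proof.
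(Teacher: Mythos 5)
Your proof is correct and follows essentially the same route as the paper's: both begin by using periodic approximation of the hyperbolic measure $\nu$ (the paper cites \cite{Cro:11,Gel:16}, you invoke Katok/Liao shadowing at Pliss points, which amounts to the same thing here) to produce a hyperbolic periodic point $p$ arbitrarily close to $\Lambda$ with $\lambda^c(p)<0$ and a stable manifold tangent to $E^{cs}$ of uniform size; both then combine that uniform size with Remark~\ref{r.intersectionstun} and $\cF^u$-minimality to place $p$ inside $\Lambda$, and finally apply Theorem~\ref{thm.existence}. The one place you genuinely streamline is the final step: from a single intersection point $y\in W^s(p)\cap\cF^u(x)\subset\Lambda$ you observe $f^{nk}(y)\to p$ (where $k$ is the period of $p$), so closedness and $f$-invariance of $\Lambda$ directly give $p\in\Lambda$; the paper instead also establishes a second transverse intersection $W^s(x)\pitchfork W^u(p)$ and appeals to the inclination lemma to conclude $\cF^u(p)\subset\Lambda$. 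Your version is slightly more elementary and avoids the inclination lemma entirely, but the substance of the argument is the same.
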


\begin{proof}
	It $\Lambda$ supports a  hyperbolic measure $\nu$ with $\lambda^c(\nu)<0$, by \cite{Cro:11,Gel:16}, there are  hyperbolic periodic points $p$  (arbitrarily close to $\Lambda$) with $\lambda^c(p)<0$. Moreover,  every $\nu$-typical point $x\in \Lambda$ 
	and  the point $p$ have stable manifolds tangent to $E^{cs}$ with uniform size (depending only on 
	$\lambda^c(\nu)$).
	
	Using Remark~\ref{r.intersectionstun}, given $p$ and any $\nu$-generic point $x$,
	the minimality of $\mathcal{F}^u$ in $\Lambda$ and the fact that sizes of $W^s(p)$ and $W^s(x)$
	are uniform  imply that if  $p$ is close enough to $\Lambda$ then
	$W^s(p) \pitchfork \mathcal{F}^u(x)\ne \emptyset$. A similar argument gives that
	$W^s(x)$ has non-empty transverse intersection with $\mathcal{F}^u(p)=W^u(p)$.
	By the inclination lemma, one has 
	\[
	\mathcal{F}^u(p)\subset \overline{\bigcup_{i\in\mathbb{Z}}\mathcal{F}^u(f^i(x))} \subset \Lambda.
	\] 
	Thus one gets that $p\in\Lambda$. By Theorem~\ref{thm.existence}, the set $\Lambda$ supports non-hyperbolic measures, proving the lemma.
\end{proof}

We are now ready to prove the proposition. If $\Lambda$ supports a non-hyperbolic measure, we fall into the first case of the dichotomy. Thus, we may assume that all ergodic measures supported on $\Lambda$ are hyperbolic.
In this case, by Lemma~\ref{l.signalofexponents}, all ergodic measures supported on $\Lambda$ have a positive center Lyapunov exponent. Note that, under this assumption, every invariant measure $\mu$ must also have a positive exponent. This follows from the fact that its ergodic decomposition consists solely of measures $\nu$ with $\chi^c(\nu) > 0$.
Furthermore, there cannot exist a sequence of ergodic measures $(\mu_n)$ such that $\chi^c(\mu_n) \to 0$. To see this, considering a subsequence if necessary,  we can assume that $\mu_n \to \mu_\infty$. By Remark 2.1, $\chi^c(\mu_\infty) = 0$, which contradicts the earlier observation.

We claim that if every ergodic measure $\mu$ supported on $\Lambda$ satisfies $\chi^c (\nu)>0$ then the set  $\Lambda$ is hyperbolic. 
We follow the classical arguments in \cite[page 521]{Man:82}.
Otherwise, as $E^s$ and $E^u$ are uniformly contracting and uniformly expanding directions, respectively, there are sequences of points
$(x_k)$, of natural numbers $(n_k)$, $n_k\to \infty$, and  of  nonnegative numbers $(\varepsilon_k)$, $\varepsilon_k\to 0$, such that
(recalling the definition of $\varphi$ in \eqref{e.defvarfi})
$$
\frac{1}{n_k} \sum_{i=0}^{n_k-1} \varphi (f^i (x_k)) \in  (-\varepsilon_k, \varepsilon_k).
$$
Considering the sequence of  measures
$$
\nu_k \eqdef
\frac{1}{n_k} \sum_{i=0}^{n_k-1} \delta_{f^i (x_k)},
$$
the Krylov-Bogolyubov procedure provided a subsequence of $(\nu_k)$ converging to an invariant measure $\nu_\infty$. By construction, $\chi(\nu_\infty)=0$, which is a contradiction. This ends the proof of the proposition.
\qed.

\section{Diffeomorphisms homotopic to Anosov: Proof of Theorem~\ref{thm.DA-case}} \label{s.dicDA}

In this section, we prove Theorem~\ref{thm.DA-case}. To do so, in Section~\ref{ss.homotopic}, we
collect some results about diffeomorphisms homotopic to Anosov involving notions as coherence, accessibility, ergodicity, and
hyperbolicity. 
The proof of Theorem~\ref{thm.DA-case}
is given in Section \ref{ss.proofda}.

\subsection{Partially hyperbolic diffeomorphisms homotopic to Anosov on $\mathbb{T}^3$}
\label{ss.homotopic}
We collect some preliminary general results.
It is well known \cite{Fra:70} that for every diffeomorphism $f\in\diff^1(\mathbb{T}^3)$ that is 
homotopic to a linear diffeomorphism Anosov $A$, there is a continuous surjective map  $h\colon \TT^3\to\TT^3$ such that 
$h\circ f=A\circ h$. In other words, the map $f$ is semi-conjugate to $A$ via the map $h$. 
We call $A$ the {\em{linear part of $f$.}}
Furthermore, if $f\in \ph^1_{c=1} (\TT^3)$ then the center bundle is integrable. 
We consider first three results that do not require the conservative hypotheses.

\begin{Theorem}[Theorem A in \cite{Pot:15} and Remark 4.4 in \cite{Pot:14}]\label{th.dynamically-coherent-DA}
	Let $f\in\ph^1_{c=1}(\TT^3)$ be  homotopic to a linear Anosov diffeomorphism $A$, then 
	\begin{itemize}
		\item $A$ has simple spectrum consisting of real eigenvalues; 
		\item  $f$ is dynamically coherent;
		\item $f$ has a unique $\mathcal{F}^s$-minimal invariant set and a unique $\mathcal{F}^u$-minimal  invariant set.
	\end{itemize}
\end{Theorem}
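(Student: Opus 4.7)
\textbf{Proof proposal for Theorem~\ref{th.dynamically-coherent-DA}.} The plan is to treat the three assertions in order, leveraging Franks' semi-conjugacy $h\colon \TT^3\to \TT^3$ with $h\circ f=A\circ h$ and its lift $\tilde h\colon \RR^3\to\RR^3$ at bounded distance from the identity. For the first assertion, I would argue by dimension counting: since $f\in\ph^1_{c=1}(\TT^3)$, the invariant splitting $E^s_f\oplus E^c_f\oplus E^u_f$ has three one-dimensional bundles, and the strong stable and strong unstable leaves of $f$ lift in $\RR^3$ to quasi-isometric curves (by Brin--Burago--Ivanov type estimates and the isotopy to Anosov). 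These curves must stay uniformly close to preimages of the corresponding eigendirections of the lift of $A$. If $A$ had a complex pair of eigenvalues, its stable or unstable linear subspace would be two-dimensional, and the asymptotic direction of the strong leaves of $f$ inside this $2$-plane would be forced to spiral, contradicting the fact that those leaves are one-dimensional quasi-isometric curves. Hence $A$ has three real eigenvalues, and as $\det A=\pm 1$ with $A$ hyperbolic they are all distinct.

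For dynamical coherence, the strategy is to construct the center foliation via the semi-conjugacy. First I would invoke Burago--Ivanov to produce $f$-invariant \emph{branching foliations} $\cF^{cs}_{\mathrm{bran}}$ and $\cF^{cu}_{\mathrm{bran}}$ tangent respectively to $E^{cs}_f$ and $E^{cu}_f$. Working in the universal cover, each branching leaf lies within bounded distance of an affine $A$-plane (the weak-stable/weak-unstable plane of $A$), because $\tilde h$ is bounded and it maps the branching leaves into affine planes. I would then rule out branching by a ``uniqueness on the torus'' argument: two distinct branching leaves through the same point would project under $\tilde h$ into the same affine plane, forcing them to coincide after taking the closure modulo $\ZZ^3$. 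This gives genuine foliations $\cF^{cs}$, $\cF^{cu}$ and then $\cF^c=\cF^{cs}\cap\cF^{cu}$, proving dynamical coherence.

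For uniqueness of the $\cF^u$-minimal set (and symmetrically for $\cF^s$), the approach is: since $A$ is linear Anosov with simple real spectrum, its strong unstable eigendirection is a totally irrational line in $\RR^3$, so the linear strong unstable foliation of $A$ is minimal. The semi-conjugacy intertwines the respective strong unstable foliations in the sense that $h$ sends $\cF^u_f$-leaves into $\cF^u_A$-leaves. Hence for every $x$, $h(\overline{\cF^u_f(x)})=\TT^3$, so any $\cF^u_f$-minimal set $\Lambda$ satisfies $h(\Lambda)=\TT^3$. If $\Lambda_1,\Lambda_2$ were two distinct $\cF^u_f$-minimal sets, they would be disjoint compact $u$-saturated sets, and one would analyze the fibers $h^{-1}(y)$: by a classical argument these fibers are connected arcs tangent to $E^{cs}_f$, contained in a single center-stable leaf; intersecting such an arc with $\Lambda_i$ gives saturated pieces of $\cF^u$, and a standard connectedness argument on each fiber rules out two disjoint intersections.

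The main obstacle will be the dynamical coherence step, specifically ruling out branching of the $cs$- and $cu$-leaves. The difficulty is that a priori the branching leaves can merge and split in intricate ways, and the only global control available comes from the bounded distance to affine planes of $A$ in the cover. The cleanest way I foresee is to establish a \emph{global product structure} in $\RR^3$ between lifts of strong stable and weak unstable branching leaves, using quasi-isometry of strong leaves, and then argue by contradiction that any branching would produce a transverse intersection incompatible with the planarity picture inherited from $A$. Once coherence is obtained, the minimal-set statement essentially follows from the semi-conjugacy; the real content of the theorem lies in coherence.
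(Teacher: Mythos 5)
The paper does not prove this theorem; it quotes it from \cite{Pot:15} and \cite{Pot:14}, so your proposal cannot be checked against an argument inside the paper, only against what is actually true. Your overall architecture (Franks semi-conjugacy, Burago--Ivanov branching foliations, quasi-isometry in the universal cover) is the correct one and broadly matches Potrie's strategy, but two of your key steps rest on claims that are not justified.

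The most serious gap is in the minimal-set step, where you assert that ``$h$ sends $\cF^u_f$-leaves into $\cF^u_A$-leaves.'' This is false in general. For $y\in\cF^u_f(x)$ one only knows that $d(f^{-n}x,f^{-n}y)\to 0$, hence $d(A^{-n}h(x),A^{-n}h(y))\to 0$; this places $h(y)$ in the Anosov unstable leaf of $A$, which equals the strong unstable leaf $\cW^u_A(h(x))$ only when $A$ has contracting center, and otherwise is the \emph{weak} unstable $\cW^{cu}_A(h(x))$. The paper records this precisely as a conditional statement in Theorem~\ref{thm.semi-conjugacy-preservinsing-center-foliation}(3). When $A$ has expanding center, your density argument breaks, and a different mechanism is needed: one must work inside the $cu$-leaves of $f$ (which do map to dense $cu$-planes of $A$) and use that $A|_{E^{cu}_A}$ is a diagonalizable hyperbolic linear map to pin down the asymptotic direction of the image of a lifted $u$-leaf, which is the real content of Remark~4.4 of \cite{Pot:14}. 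Relatedly, the fibers $h^{-1}(y)$ are not merely ``connected arcs tangent to $E^{cs}_f$'': by Theorem~\ref{thm.semi-conjugacy-preservinsing-center-foliation}(2) each fiber lies in a single \emph{center} leaf, which is stronger and is what is actually used. Your ``spiraling'' argument for ruling out a complex pair of eigenvalues is also not conclusive as stated: a logarithmic spiral in a plane is quasi-isometric to $\RR$, so quasi-isometry of one-dimensional strong leaves by itself does not exclude a complex pair. The clean exclusion works at the level of the two-dimensional branching foliations: the lifted $cs$- and $cu$-branching leaves must stay uniformly close to affine $2$-planes that are asymptotically $A$-invariant, and a hyperbolic $A\in GL(3,\ZZ)$ with a complex pair has exactly one proper invariant $2$-plane, which cannot simultaneously serve as both the $cs$- and the $cu$-plane field. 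Finally, your branching-elimination step is, as you acknowledge, only a sketch; one must first establish a global product structure between lifted $cs$-branching leaves and the lifted strong unstable foliation before the collapse argument can close, and this is where the technical core of \cite{Pot:15} lies.
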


In what is  below, we consider simultaneously a diffeomorphism $f$ and its Anosov factor $A$, to avoid misunderstanding
we emphasize the dependence on the maps of the splittings and the foliations,
denoting by $\mathcal{F}^*$ and $\cW^*$ the invariant foliations of $f$ and $A$ tangent to the bundles $E^*_f$ and $E^*_A$,
$*=s,c,u, cs, cu$.
In the next result we  summarize the properties of  the semi-conjugacy given by \cite{Fra:70}.

\begin{Theorem}[\cite{Ham:13,HamPot:14,Pot:15,Ure:12,BuFiSaVa:12,ViYan:17}]\label{thm.semi-conjugacy-preservinsing-center-foliation}
	Let $f\in\ph^1_{c=1}(\TT^3)$ be  homotopic to a linear Anosov diffeomorphism $A$ with semi-conjugacy $h$. 	
	Then 
	\begin{enumerate}
		\item  $h(\cF^c(x))=\cW^c(h(x))$ for every $x\in\TT^3$;
		\item 
		for every $x\in\mathbb{T}^3$, the pre-image $h^{-1}(x)$ is a 
		connected set  contained in $\mathcal{F}^c(h^{-1}(x))$;
		\item 
		if $A$ has contracting center, then  $h(\cF^u(x))=\cW^u(h(x)),$ for every $x\in\TT^3$;
		\item 
		the diffeomorphism $f$ has a unique measure of maximal entropy, $\mu_{\mathrm{max}}$, which is the only measure satisfying $h_\ast (\mu_{\rm max}) = \mathrm{Leb}$. Moreover, the center Lyapunov exponents $\chi^c(\mu_{\mathrm{max}},f)$ and $\chi^c(\mathrm{Leb},A)$ have the same sign;	
		\item  each center leaf intersects each strong unstable leaf in at most one point.   
	\end{enumerate}
\end{Theorem}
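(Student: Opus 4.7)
The plan is to work on the universal cover $\mathbb{R}^3$, where Franks' theorem provides a unique lift $\tilde h$ of $h$ at bounded distance from the identity that conjugates $\tilde f$ to $\tilde A$. All strong/center foliations of $f$ lift to foliations of $\mathbb{R}^3$ that are quasi-isometric (a general property of partially hyperbolic diffeomorphisms on $\TT^3$ isotopic to Anosov, due to Brin-Burago-Ivanov and Hammerlindl), and the foliations of $A$ are linear, hence enjoy a global product structure. These two ingredients together will drive every item.

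For items (1), (2), and (5), I would first prove that fibers of $\tilde h$ are bounded subsets of single center leaves of $f$. The bounded-distance property of $\tilde h$ forces $\tilde h^{-1}(y)$ to be a bounded set; dynamical characterization of center leaves (points whose orbit stays at bounded distance from each other under both $\tilde f^n$ and $\tilde f^{-n}$) combined with the quasi-isometry of $\tilde\cF^c$ shows such fibers must lie inside one center leaf. This immediately gives the inclusion in (2); connectedness of the fibers follows because $\tilde h$ restricted to a center leaf is monotone (a continuous map at bounded distance from a linear one on an unbranched line). For (1), iterating the identity $\tilde h\circ\tilde f^n = \tilde A^n\circ\tilde h$ and using that $\tilde\cF^c(x)$ is not expanded by $\tilde f$ relative to the stable/unstable directions, one concludes $\tilde h(\tilde\cF^c(x))\subset \tilde\cW^c(\tilde h(x))$; surjectivity onto the linear center leaf comes from properness of $\tilde h$. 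For (5): if two distinct points $x,y\in\cF^c(x)\cap \cF^u(x)$ existed, then backward iterates of $y$ would be forced to stay on a quasi-isometric center leaf while simultaneously contracting along $E^u$, contradicting local product structure of the partially hyperbolic splitting.

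For (3), the hypothesis that $A$ has contracting center means $E^s_A\oplus E^c_A$ is uniformly contracted and $\dim E^u_A=1$. Together with $\dim E^c_f=1$ this gives that $E^s_f\oplus E^c_f$ is (topologically) contracted along orbits, so the $f$-unstable manifold is one-dimensional and coincides with the strong unstable leaf $\cF^u$. The intertwining relation then forces $\tilde h(\tilde\cF^u(x))$ to be a curve whose iterates under $\tilde A^n$ expand uniformly; combined with the quasi-isometry of $\tilde\cF^u$ this image must be contained in $\tilde\cW^u(\tilde h(x))$, and surjectivity again follows from properness.

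Item (4) is where I expect the work. The key point is that $h$ is entropy-preserving in both directions because its fibers are compact connected subsets of a single center leaf, and the action of $f$ on 1-dimensional center leaves carries zero topological entropy (Bowen's formula for entropy of a fibered system, using uniform sub-exponential growth along $E^c$ coming from partial hyperbolicity). Consequently $h_\mu(f)=h_{h_\ast\mu}(A)$ for every $\mu\in\mathfrak{M}_{\mathrm{inv}}(f)$. Since $\mathrm{Leb}$ is the unique measure of maximal entropy for the linear Anosov $A$, this yields a unique $\mu_{\max}$ with $h_\ast\mu_{\max}=\mathrm{Leb}$, giving the first assertion. For the sign of $\chi^c(\mu_{\max},f)$: the Ruelle inequality and Pesin's formula, applied both to $(f,\mu_{\max})$ and $(A,\mathrm{Leb})$, show that the total Lyapunov sum in the center-stable and center-unstable halves are determined by the common entropy; comparing with the $E^s$ and $E^u$ exponents (which are uniformly negative/positive) forces $\chi^c(\mu_{\max},f)$ and $\chi^c(\mathrm{Leb},A)$ to lie on the same side of zero. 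The main obstacle in the whole proposal is precisely this sign-preservation step, since it requires comparing two different invariant measures through the semi-conjugacy rather than via a direct pushforward of pointwise Lyapunov data; I would rely on the entropy formulas of Ledrappier--Young (or the Pesin entropy formula for $A$, and Ruelle's inequality for $f$) to extract the needed inequality.
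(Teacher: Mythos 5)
First, a point of reference: the paper offers no proof of this statement --- it is quoted from the literature (\cite{Ham:13,HamPot:14,Pot:15,Ure:12,BuFiSaVa:12,ViYan:17}) --- so your proposal is measured against the published proofs rather than against an argument in the text. For items (1), (2) and (5) your route (lift to $\mathbb{R}^3$, use that $\tilde h$ is at bounded distance from the identity, invoke quasi-isometry of the lifted foliations after Brin--Burago--Ivanov and Hammerlindl, and the global product structure of the linear foliations) is the standard one and the outline is sound. For item (3) your conclusion is right but one intermediate claim is false: from ``$A$ has contracting center'' you cannot deduce that $E^s_f\oplus E^c_f$ is contracted along orbits of $f$, nor that the unstable manifold of $f$ is one-dimensional ($f$ may well have periodic points with expanding center). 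What is actually needed is only that two points on the same leaf of $\cF^u$ have backward $f$-orbits converging to each other, hence their $\tilde h$-images have backward $\tilde A$-orbits remaining at bounded distance, which for linear $A$ with contracting center places them on the same one-dimensional leaf of $\cW^u$; your quasi-isometry and properness step then finishes.

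The genuine gaps are both in item (4). First, knowing that the measures of maximal entropy of $f$ are exactly the measures $\mu$ with $h_*\mu=\mathrm{Leb}$ (which your Ledrappier--Walters/Bowen fiber-entropy argument does give) does not yield uniqueness, because $h_*$ is not a priori injective on invariant measures. The missing ingredient is that $h$ is injective off a $\mathrm{Leb}$-null set: each center leaf of $A$ carries at most countably many points with non-trivial fiber (cf.\ Remark~\ref{r.countable-preimages}), and disintegrating $\mathrm{Leb}$ along $\cW^c$ shows these points form a null set, after which any $\mu$ with $h_*\mu=\mathrm{Leb}$ is uniquely determined. Second, the sign statement is not established; you flag it yourself. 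Applying Ruelle and Pesin to the two systems separately does not suffice: the published proofs need the comparison $\chi^u(\mu_{\mathrm{max}},f)\le\chi^u(\mathrm{Leb},A)$, obtained from the fact that $\tilde h$ is at bounded distance from the identity, so the length growth of $f$-unstable leaves is controlled by the growth of their images under $A$. With that in hand, if $A$ has expanding center and one assumes $\chi^c(\mu_{\mathrm{max}},f)\le 0$, Ruelle's inequality gives $h_{\mu_{\mathrm{max}}}(f)\le\chi^u(\mu_{\mathrm{max}},f)\le\chi^u(\mathrm{Leb},A)<h_{\mathrm{top}}(A)=h_{\mathrm{top}}(f)$, a contradiction (and symmetrically for contracting center, using $f^{-1}$). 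Without that leafwise comparison the sign claim remains open in your write-up.
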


\begin{Remark}\label{r.countable-preimages}
	{\em{Item (2) of Theorem~\ref{thm.semi-conjugacy-preservinsing-center-foliation}
			implies that each center leaf $\cW^c(x)$ contains countably many 
			(perhaps finitely many) points whose pre-images under $h$ are not singletons. Therefore
			there are points  $\tilde x_0\in\mathbb{T}^3$ such that $h^{-1}(\tilde x_0)$ is a singleton.}}
\end{Remark}

We conclude our summary of general results on diffeomorphisms homotopic to Anosov with one
involving the transversality condition.

\begin{Theorem}[\cite{AviCroEskPotWilZha:}]
	\label{t.acepwz}
	Let $f\in\ph_{c=1}^{1+\alpha}(\mathbb{T}^3)$  be homotopic to a  linear Anosov 
	diffeomorphism $A$ with expanding center bundle.   
	Let $h$ be the semiconjugacy between $f$ and $A$.
	Then either the strong unstable foliation $\mathcal{F}^u$ is mapped by  $h$ to the strong unstable foliation of A,
	or $\mathcal{F}^u$ is $s$-transverse. In particular, if $f$ is accessible, then $\mathcal{F}^u$ is $s$-transverse.
\end{Theorem}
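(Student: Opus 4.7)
The plan is to work in the universal cover $\mathbb{R}^{3}$, using the lifted semiconjugacy $\tilde h$ which is at bounded $C^{0}$-distance from the identity and satisfies $\tilde A\circ\tilde h=\tilde h\circ\tilde f$. The first structural observation is that $\tilde h$ sends each strong unstable leaf $\tilde{\mathcal F}^{u}(x)$ of $\tilde f$ into a single $cu$-plane of $\tilde A$: for $x,y$ in the same $\tilde{\mathcal F}^{u}$-leaf, backward contraction along $\tilde{\mathcal F}^{u}$ keeps $\tilde f^{-n}(x)-\tilde f^{-n}(y)$ bounded, so $\tilde A^{-n}\bigl(\tilde h(x)-\tilde h(y)\bigr)$ stays bounded as well, and since $\tilde A^{-1}$ expands $E^{s}_{A}$, the $E^{s}_{A}$-component of $\tilde h(x)-\tilde h(y)$ must vanish. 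Hence the image curve $\tilde h(\tilde{\mathcal F}^{u}(x))$ lives inside the 2-plane $\tilde{\mathcal W}^{cu}_{A}(\tilde h(x))$.

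The dichotomy then reads as follows: either $\tilde h(\tilde{\mathcal F}^{u}(x))=\tilde{\mathcal W}^{u}_{A}(\tilde h(x))$ for every $x$ (giving the first alternative), or at some point the image curve deviates from the $\mathcal W^{u}_{A}$-direction inside the $cu$-plane. In the second case I would exploit the asymmetry $|\lambda^{c}_{A}|<|\lambda^{u}_{A}|$ together with $\tilde A$-equivariance and the forward expansion of $\tilde f$ along $\mathcal F^{u}$: iterating forward, any initial deviation is inherited on arbitrarily long $\tilde{\mathcal F}^{u}$-arcs, so large strong unstable disks of $\tilde f$ project via $\tilde h$ to curves containing segments that oscillate transversely with respect to the linear $\mathcal W^{u}_{A}$-foliation of the $cu$-plane, with both signs of deviation appearing.

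The main obstacle, and the crux of the argument, is upgrading these projected oscillations to the geometric transversality demanded by Definition~\ref{d.transversecondition}. Inside a strong unstable disk $D$ of sufficiently large inner radius, I would locate two nearby points $x,y\in D$ with $y\in\mathcal F^{s}_{\mathrm{loc}}(x)$ whose projected strong unstable segments cross $\tilde{\mathcal W}^{u}_{A}(\tilde h(x))$ with opposite signs. Taking a $cu$-disk $\Delta$ through $x$ that contains $\mathcal F^{u}_{\mathrm{loc}}(x)$ as reference, continuity of the strong stable holonomy, together with the fact that $\tilde h$-fibers sit in center leaves and therefore cannot cancel a signed crossing visible in the $cu$-plane downstairs, would imply that the strong stable projection of $\mathcal F^{u}_{\mathrm{loc}}(y)$ into $\Delta$ meets both components of $\Delta\setminus\mathcal F^{u}_{\mathrm{loc}}(x)$, as required.

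For the accessibility clause I would argue by contradiction. If $f$ is accessible and $\mathcal F^{u}$ is not transverse, the dichotomy gives $h(\mathcal F^{u})=\mathcal W^{u}_{A}$; applying the same argument to $f^{-1}$, which is homotopic to $A^{-1}$, a linear Anosov with contracting center, and using Theorem~\ref{thm.semi-conjugacy-preservinsing-center-foliation}(3), one also obtains $h(\mathcal F^{s})=\mathcal W^{s}_{A}$. Hence the $su$-accessibility class of any $x$ projects under $h$ into the single $su$-leaf $\mathcal W^{su}_{A}(h(x))$ of $A$. Accessibility would then force $h(\mathbb T^{3})\subset\mathcal W^{su}_{A}(h(x))$, but $h$ is surjective and $\mathcal W^{su}_{A}$ is a codimension-one foliation of $\mathbb T^{3}$, a contradiction.
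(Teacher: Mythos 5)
This statement is not proved in the paper at all: it is a borrowed result attributed to [AviCroEskPotWilZha:], a reference that is listed only as ``in preparation,'' so there is no ``paper's own proof'' to compare your attempt against. I can therefore only assess your proposal on its own merits.

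Your first structural step is correct and standard: in the universal cover, for $y\in\tilde{\mathcal F}^u(x)$ the bound $\sup_n\bigl|\tilde A^{-n}(\tilde h(x))-\tilde A^{-n}(\tilde h(y))\bigr|=\sup_n\bigl|\tilde h(\tilde f^{-n}x)-\tilde h(\tilde f^{-n}y)\bigr|<\infty$ forces the $E^s_A$-component of $\tilde h(x)-\tilde h(y)$ to vanish, so $\tilde h$ sends strong unstable leaves of $\tilde f$ into $cu$-planes of $\tilde A$. The accessibility clause is also argued convincingly: by item~(3) of Theorem~\ref{thm.semi-conjugacy-preservinsing-center-foliation} applied to $f^{-1}$ (whose linear part $A^{-1}$ has contracting center) one always has $h(\mathcal F^s)=\mathcal W^s_A$, and if in addition $h(\mathcal F^u)=\mathcal W^u_A$ then every $su$-path of $f$ is sent by $h$ into a single $su$-leaf of the linear map, so by accessibility $h(\mathbb T^3)$ lies in a two-dimensional immersed leaf, contradicting surjectivity of $h$.

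The gap is in the middle, which is precisely where the hard content of the theorem sits. Two claims are asserted without justification. First, you claim that if the image of some unstable leaf deviates from $\mathcal W^u_A$, then forward iteration of $\tilde A$ produces deviations of \emph{both signs} on large unstable disks. The $A$-equivariance $c_{\tilde f(x)}(s)=\lambda_c\,c_x(\phi^{-1}(s))$ amplifies an existing deviation but in no way changes its sign; nothing in the argument rules out a monotone or one-sided deviation profile, and the interplay between the boundedness of the deviation (coming from quasi-isometry of the foliations) and the expansion $\lambda_c>1$ is delicate and is not engaged with at all. Second, and more seriously, you need to pass from a signed deviation in the $h$-image downstairs to the actual geometric condition of Definition~\ref{d.transversecondition}: finding nearby points $x,y$ with $y\in\mathcal F^s_{\mathrm{loc}}(x)$ such that the strong stable holonomy sends $\mathcal F^u_{\mathrm{loc}}(y)$ across $\mathcal F^u_{\mathrm{loc}}(x)$ inside a $cu$-disk. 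The sentence ``the $\tilde h$-fibers sit in center leaves and therefore cannot cancel a signed crossing visible in the $cu$-plane downstairs'' is the crux and is not an argument: the fibers of $h$ are nontrivial center arcs of variable length (Theorem~\ref{thm.semi-conjugacy-preservinsing-center-foliation}(2) and Remark~\ref{r.countable-preimages}), and a crossing seen in $h$-coordinates is not automatically reflected upstairs without a quantitative comparison of fiber sizes with the deviation amplitude, nor have you addressed why the two test points can be taken in the same local strong stable leaf. As it stands, the proposal reproduces the easy parts of the statement but leaves the essential step — converting a non-trivial $h$-image geometry into the transverse condition — as an unproved heuristic.
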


We now focus on the conservative setting, where sharper results can be achieved. We explore the relationships between accessibility, ergodicity, and hyperbolicity. The first result in this direction was presented in Theorem~\ref{t.thenitisergodic}. We now state a second key result.

\begin{Theorem}[\cite{GanShi:20}]\label{thm.ergodicity-of-DA}  
	Every $f\in\ph^{1+\alpha}_{\mathrm{Leb}, c=1}(\TT^3)$  homotopic to a linear Anosov diffeomorphism is ergodic.
	Moreover, if $f$ is 
	not accessible, then it is Anosov. 
\end{Theorem}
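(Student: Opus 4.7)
The plan is to reduce both assertions of the theorem to the dichotomy ``not accessible $\Rightarrow$ Anosov''. If $f$ is accessible, ergodicity follows directly from Theorem~\ref{t.thenitisergodic}. If instead $f$ is not accessible, the moreover clause yields that $f$ is Anosov, and then ergodicity follows from the classical fact that a conservative $C^{1+\alpha}$ Anosov diffeomorphism is ergodic (indeed, Bernoulli) because Lebesgue coincides with the SRB measure. So the core task is: \emph{assume $f$ is not accessible and deduce that $f$ is Anosov.}

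First I would invoke the structural results of Rodriguez Hertz--Rodriguez Hertz--Ures for conservative partially hyperbolic diffeomorphisms with one-dimensional center: the failure of accessibility produces a non-empty $f$-invariant compact lamination $\mathcal{L}$ whose leaves are $C^1$ surfaces tangent to $E^s\oplus E^u$ (accessibility classes of non-empty interior are ruled out in the non-accessible case, and the remaining classes organize into such a lamination). Volume preservation forces $\mathcal{L}$ to saturate $\mathbb{T}^3$, so $E^s\oplus E^u$ is integrable. On each leaf $L$, a suitable iterate of $f$ acts as a $C^{1+\alpha}$ Anosov diffeomorphism (with stable/unstable bundles $E^s$ and $E^u$), so by the Franks--Newhouse classification each $L$ is a $2$-torus. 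Using that $\mathbb{T}^3$ admits no foliation with Reeb components (Novikov) and that our leaves come equipped with transverse Anosov dynamics, these leaves are embedded incompressible tori, carrying primitive non-zero classes $[L]\in H_2(\mathbb{T}^3,\mathbb{Z})$.

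Next I would exploit the homotopy type of $f$. Replacing $f$ by an iterate we may fix a leaf $L_0$, so $A^k_*[L_0]=f^k_*[L_0]=[L_0]$, where $A_*$ denotes the induced action on $H_2(\mathbb{T}^3,\mathbb{Z})$. By Theorem~\ref{th.dynamically-coherent-DA} the eigenvalues $\lambda_1,\lambda_2,\lambda_3$ of $A$ on $H_1(\mathbb{T}^3,\mathbb{R})$ are real, simple, and avoid the unit circle. The eigenvalues of $A_*$ on $H_2\cong \bigwedge^2 H_1$ are the products $\lambda_i\lambda_j$ for $i<j$; none of these can equal $\pm 1$, since $\lambda_i\lambda_j=\pm 1$ combined with $\lambda_1\lambda_2\lambda_3=\det A=\pm 1$ would force the third eigenvalue to be $\pm 1$, contradicting the Anosov property. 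Hence no non-zero class is periodic under $A_*$, contradicting $A^k_*[L_0]=[L_0]$ with $[L_0]\neq 0$. To convert this contradiction into the conclusion that $f$ is Anosov, I would use the semiconjugacy $h$ from Theorem~\ref{thm.semi-conjugacy-preservinsing-center-foliation}: along any $f$-invariant $su$-saturated set, $h$ collapses center leaves of $f$ into center leaves of $A$, and the topological obstruction above can only be avoided if every center leaf of $f$ projects injectively to $\mathbb{T}^3$, which (combined with mostly contracting/expanding behavior of the center inherited from $A$) forces $E^c_f$ to be uniformly hyperbolic.

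The main obstacle is the lamination step: rigorously running the RHRHU machinery in the $C^{1+\alpha}$ volume-preserving setting to pass from failure of accessibility to an honest foliation by $su$-tori of $\mathbb{T}^3$. A secondary delicate point is the transfer of the topological contradiction into uniform hyperbolicity of the center bundle rather than a mere vacuous statement; for this the interplay between the semiconjugacy $h$, the leaf-conjugacy between $f$ and $A$ available in the coherent setting, and the one-dimensionality of $E^c$ are essential, since they force the center exponent of $f$ along the (hypothetical) $su$-foliation to match the sign and strength of $\lambda_c(A)$.
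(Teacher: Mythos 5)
This statement is quoted from \cite{GanShi:20}; the paper offers no proof of its own, so your attempt can only be measured against the known argument and against internal correctness. Your reduction is fine: accessibility gives ergodicity via Theorem~\ref{t.thenitisergodic}, and a conservative $C^{1+\alpha}$ Anosov diffeomorphism is ergodic, so everything hinges on ``not accessible $\Rightarrow$ Anosov''. But your proof of that implication has a fatal gap: you assume that the leaves of the $su$-lamination produced by the failure of accessibility are \emph{compact}, so that Franks--Newhouse applies and each leaf is a $2$-torus carrying a class in $H_2(\mathbb{T}^3,\mathbb{Z})$. This is unjustified and false in the model case: the linear Anosov diffeomorphism $A$ itself is partially hyperbolic with one-dimensional center, is \emph{not} accessible (its accessibility classes are exactly the leaves of the linear $su$-foliation), and those leaves are planes that are dense in $\mathbb{T}^3$, not tori. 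Your eigenvalue computation on $\bigwedge^2 H_1$ is correct as algebra, but it is applied to a homology class that need not exist. Worse, if your argument were valid it would show that non-accessibility is \emph{impossible}, which contradicts the existence of the linear example; any correct proof must \emph{produce} the Anosov property from joint integrability of $E^s\oplus E^u$, not derive a contradiction. Your closing paragraph, which proposes to ``convert the contradiction into the conclusion that $f$ is Anosov'' via the semiconjugacy, is not a repair: a contradiction cannot be converted into a weaker conclusion, and no mechanism is given for extracting uniform hyperbolicity of $E^c$.

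For comparison, the actual argument in \cite{GanShi:20} starts the same way (non-accessibility plus conservativity and the Rodriguez Hertz--Rodriguez Hertz--Ures theory force $E^s\oplus E^u$ to be jointly integrable on all of $\mathbb{T}^3$), but then works with the \emph{non-compact} planar $su$-leaves: using the semiconjugacy $h$ and the global quasi-isometric structure of the strong foliations on $\mathbb{T}^3$, one shows $h$ carries $su$-leaves of $f$ to $su$-leaves of $A$, and a rigidity/invariance-principle type analysis of the center Lyapunov exponent then yields uniform hyperbolicity of $E^c$. That analytic step, which you replace by a topological obstruction that does not exist, is the heart of the theorem.
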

The next result comes from \cite{GanShi:20}.
\begin{Theorem}[Theorem 5.1 in \cite{GanShi:20}]\label{thm.gan-shi}
	Let $f\in\ph^{1+\alpha}_{ c=1}(\TT^3)$ be an Anosov diffeomorphism with expanding center and whose linear part is $A$.
	Let $h$ be the conjugacy between $f$ and $A$. If $h$ sends the strong unstable foliation of $f$ to the strong unstable foliation of $A$, then $f$ is not accessible.
\end{Theorem}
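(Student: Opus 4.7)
The plan is to transfer the question to the linear model $A$ via the conjugacy $h$, and then to verify that any linear Anosov diffeomorphism on $\mathbb{T}^3$ with expanding center fails to be $su$-accessible, because its $su$-accessibility class through any point is contained in the image of a $2$-plane in $\mathbb{R}^3$, and hence has Lebesgue measure zero in $\mathbb{T}^3$.

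First, I would upgrade the hypothesis on the strong unstable foliation to both strong foliations. The hypothesis supplies $h(\mathcal{F}^u_f(x)) = \mathcal{W}^u_A(h(x))$ for every $x$. To obtain the analogous statement for the strong stable foliations, note that $h$ also conjugates $f^{-1}$ to $A^{-1}$, and that $f^{-1}$ and $A^{-1}$ are both partially hyperbolic with \emph{contracting} center, since $f$ and $A$ have expanding center. Moreover, the strong unstable foliation of $f^{-1}$ coincides with $\mathcal{F}^s_f$, and the strong unstable foliation of $A^{-1}$ coincides with $\mathcal{W}^s_A$. Applying Theorem~\ref{thm.semi-conjugacy-preservinsing-center-foliation}(3) to the pair $(f^{-1},A^{-1})$ then yields $h(\mathcal{F}^s_f(x)) = \mathcal{W}^s_A(h(x))$.

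Since $h$ is a homeomorphism that carries the pair of strong foliations of $f$ to the pair of strong foliations of $A$, it sends $su$-accessibility classes of $f$ bijectively onto $su$-accessibility classes of $A$. Thus $f$ is accessible if and only if $A$ is, and it suffices to show that the linear $A$ is never accessible.

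For this last step, let $\pi : \mathbb{R}^3 \to \mathbb{T}^3$ denote the quotient map, and let $E^s_A, E^u_A \subset \mathbb{R}^3$ be the $1$-dimensional eigenspaces of $A$ corresponding to the strong stable and strong unstable directions. The leaf of $\mathcal{W}^\ast_A$ through $\pi(0)$ is $\pi(E^\ast_A)$ for $\ast \in \{s,u\}$, and a direct iteration of strong stable and strong unstable arcs from $\pi(0)$ shows that the $su$-accessibility class of $\pi(0)$ is exactly the additive subgroup $\pi(E^s_A + E^u_A)$ of $\mathbb{T}^3$. Setting $P := E^s_A \oplus E^u_A$, the set $P + \mathbb{Z}^3 \subset \mathbb{R}^3$ is a countable union of affine $2$-planes, hence has Lebesgue measure zero, so $\pi(P)$ is a proper subset of $\mathbb{T}^3$. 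Consequently $A$ is not accessible, and neither is $f$. The only delicate step is the transfer of the foliation-preservation property from $\mathcal{F}^u_f$ to $\mathcal{F}^s_f$ via the duality $f \leftrightarrow f^{-1}$; once this is in hand, the conclusion follows immediately from the linear structure.
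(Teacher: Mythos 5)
The paper does not prove this statement; it is quoted verbatim from Gan--Shi (Theorem 5.1 of \cite{GanShi:20}), so there is no internal proof to compare against. Your argument, however, is a correct and self-contained proof of the cited result, and it follows the natural route. The two key steps both check out: (i) the stable side is handled by applying item (3) of Theorem~\ref{thm.semi-conjugacy-preservinsing-center-foliation} to the pair $(f^{-1},A^{-1})$ --- the same $h$ satisfies $h\circ f^{-1}=A^{-1}\circ h$, the center of $A^{-1}$ is contracting, and $\cF^u_{f^{-1}}=\cF^s_f$, $\cW^u_{A^{-1}}=\cW^s_A$, so $h$ carries $\cF^s_f$ to $\cW^s_A$ (for an Anosov $f$ with expanding center this is also immediate from the topological characterization of stable sets under a conjugacy); and (ii) since the homeomorphism $h$ maps both strong foliations of $f$ leafwise onto those of $A$, it maps $su$-paths to $su$-paths in both directions, reducing accessibility of $f$ to accessibility of $A$, which fails because the accessibility class of $\pi(0)$ is $\pi(E^s_A\oplus E^u_A)$, a Lebesgue-null proper subgroup of $\TT^3$. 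This is essentially the joint-integrability argument one expects (and, as far as I recall, the one in \cite{GanShi:20}, phrased there via pulling back the linear plane foliation through $h^{-1}$); no gaps.
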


We now  prove our results for diffeomorphisms homotopic to Anosov.

\subsection{Proof of Theorem~\ref{thm.DA-case}}
\label{ss.proofda}
There are two cases based on the accessibility of $f$. If $f$ is not accessible, then by Theorem~\ref{thm.ergodicity-of-DA}, the map $f$ is Anosov, placing us in the first case of the dichotomy. Therefore, in what follows, we assume that $f$ is accessible.
Theorem~\ref{t.thenitisergodic} implies that the measure $\mathrm{Leb}$ is $f$-ergodic. If $\mathrm{Leb}$ is nonhyperbolic, we are in the second case of the dichotomy. Thus, it remains to consider the case where $\mathrm{Leb}$ is hyperbolic. Additionally, without loss of generality, we may assume (by considering $f^{-1}$ if necessary) that $\lambda^c(\mathrm{Leb}) > 0$.

As $f$ is accessible, by Theorem~\ref{thm.mostly-expanding-criterion}, $f$ is mostly expanding. By the third item in Theorem~\ref{th.dynamically-coherent-DA},   $f$ has a unique $\mathcal{F}^{u}$-minimal
invariant set $\Lambda$.
As $f$ is mostly expanding, every u-Gibbs state supported on $\Lambda$ has positive center Lyapunov exponent. 
By Proposition~\ref{p.intermediate}, the set $\Lambda$ either supports some nonhyperbolic measure or it is uniformly hyperbolic with uniformly expanding center. If the first case, we are in the first case of the dichotomy and the result follows. 
Thus it remains to address the latter case. For that we analyze the
central bundle of the linear part $A$ of $f$.

\begin{Claim-numbered}
	\label{cl.linearpartofA}
	The center bundle of $A$ is expanding.
\end{Claim-numbered}

Note that if the center bundle of $A$ is expanding, then the hypotheses of Theorem~\ref{t.acepwz}
hold. As the diffeomorphisms is accessible, 
the set $\Lambda$ satisfies the transversality condition. Consequently, the hypotheses of Theorem~\ref{t.transversecondition} are met, and the dichotomy in the theorem follows.

\begin{proof}[Proof of Claim~\ref{cl.linearpartofA}]
	Assume, by contradiction, that the central bundle is contracting. 
	Consider the semi-conjugacy $h$ between $f$ and $A$ provided by Theorem~\ref{thm.semi-conjugacy-preservinsing-center-foliation}. It
	maps the strong unstable leaves in $\Lambda$ into the strong unstable leaves of $A$ due Item (3) in Theorem~\ref{thm.semi-conjugacy-preservinsing-center-foliation}. This implies that  $h(\Lambda)=\mathbb{T}^3$.
	Hence, by Item (4) of Theorem~\ref{thm.semi-conjugacy-preservinsing-center-foliation},
	the unique  measure of maximal entropy of $f$ is supported on $\Lambda$ and has negative central Lyapunov exponent
	(the same sign as the one of $A$).
	This contradicts the fact that
	$\Lambda$ is central expanding, proving the claim.
\end{proof}

The proof of Theorem~\ref{thm.DA-case} is now complete.

\section{Time-one maps of geodesic flows: Proof of Theorem~\ref{thm.geodesic-flow-case}} 
\label{s.geodesicflow}

In this section we consider  partially hyperbolic diffeomorphisms $f$ on the whole ambience $M$. 
In Section~\ref{ss.thm.geodesic-flow-case}, we prove Theorem~\ref{thm.geodesic-flow-case}. 
To do  so, we first introduce some preliminary results in Section~\ref{ss.strongacc}.

\subsection{Preliminaries: strong forms of accessibility}
\label{ss.strongacc}
We begin by presenting a strong form of accessibility.
The diffeomorphism $f$ is \emph{usu-accessible} if there exists a constant
$L>1$ such that for every pair of points on $x,y\in M$ there are $C^1$-curves $\gamma_1,\gamma_2,\gamma_3$ of length at most $L$  whose concatenation connects $x$ and $y$ such that $\gamma_1,$ $\gamma_3$ are tangent to  $E^u$ and $\gamma_2$ is tangent to  $E^s.$ One can analogously define {\em{sus-accessibility.}}

%

Let us observe that  usu-accessibility implies that there is  a unique $\cF^u$-minimal set: 

\begin{Theorem}[Theorem 1.1 in \cite{RodUre:19}]\label{thm.unique-u-minimal}
	Let $f$ be a $usu$-accessible partially hyperbolic diffeomorphism.  Then the strong unstable foliation has a unique minimal set, and thus it is invariant. 
\end{Theorem}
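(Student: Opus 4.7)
My plan is to prove uniqueness of the $\cF^u$-minimal set by contradiction; invariance of the unique minimal set then follows formally, because $f$ preserves the foliation $\cF^u$ and hence carries $\cF^u$-minimal sets to $\cF^u$-minimal sets. Suppose toward contradiction that $\Lambda_1\neq\Lambda_2$ are two $\cF^u$-minimal sets. They are automatically disjoint, since their intersection would be a proper nonempty closed $\cF^u$-saturated subset of each, contradicting minimality.

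First, pick $x_1\in\Lambda_1$ and $x_2\in\Lambda_2$ and apply $usu$-accessibility to get $C^1$-arcs $\gamma_1,\gamma_2,\gamma_3$ of length at most $L$ concatenating to a path from $x_1$ to $x_2$, with $\gamma_1,\gamma_3$ tangent to $E^u$ and $\gamma_2$ tangent to $E^s$. Writing this as $x_1\to y_1\to y_2\to x_2$, the $\cF^u$-saturation of each $\Lambda_i$ gives $y_1\in\cF^u(x_1)\subset\Lambda_1$ and $y_2\in\cF^u(x_2)\subset\Lambda_2$; moreover $y_2\in\cF^s(y_1)$ with $d^s(y_1,y_2)\le L$.

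Second, iterate forward. Uniform contraction along $E^s$ gives $d^s(f^n(y_1),f^n(y_2))\le \lambda^n L\to 0$, so by compactness there is a subsequence $(n_k)$ and a common limit $z\in M$ with both $f^{n_k}(y_1)\to z$ and $f^{n_k}(y_2)\to z$. The heart of the argument is to convert this shared accumulation point into an honest intersection $\Lambda_1\cap\Lambda_2\neq\emptyset$, which contradicts disjointness.

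The main obstacle, and what I expect to require the most care, is that the sets $\Lambda_i$ are not a priori $f$-invariant, so $f^{n_k}(y_i)$ lives in $f^{n_k}(\Lambda_i)$---another $\cF^u$-minimal set---rather than in $\Lambda_i$ itself. I would handle this by extracting further Hausdorff-convergent subsequences $f^{n_k}(\Lambda_i)\to\Lambda_i^\ast$ and observing that each $\Lambda_i^\ast$ is closed and $\cF^u$-saturated, thanks to continuity of the strong unstable foliation (local leaves through convergent points converge to the local leaf through the limit). Both limits contain $z$, and hence the entire leaf $\cF^u(z)$. Using the density of $\cF^u(f^{n_k}(y_i))$ in $f^{n_k}(\Lambda_i)$ coming from $\cF^u$-minimality, combined with the shrinking $d^s$-gap, one argues that for $k$ large the two iterates $f^{n_k}(\Lambda_1)$ and $f^{n_k}(\Lambda_2)$ must meet; the disjoint-or-equal dichotomy for $\cF^u$-minimal sets then forces $f^{n_k}(\Lambda_1)=f^{n_k}(\Lambda_2)$, hence $\Lambda_1=\Lambda_2$, yielding the desired contradiction. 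Lemma~\ref{rem:usu-bound-s} can enter here to ensure the $s$-arc $\gamma_2$ is non-degenerate when setting up this limiting geometry.
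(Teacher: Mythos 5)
There is a genuine gap. Your setup is fine: two $\cF^u$-minimal sets are disjoint, $usu$-accessibility gives $y_1\in\Lambda_1$, $y_2\in\Lambda_2$ with $y_2\in\cF^s_L(y_1)$, and forward iteration shrinks the stable gap $d^s(f^n y_1,f^n y_2)\to 0$. The problem is the pivotal step: concluding that ``for $k$ large the two iterates $f^{n_k}(\Lambda_1)$ and $f^{n_k}(\Lambda_2)$ must meet.'' Two closed $\cF^u$-saturated sets can each contain dense leaves and have points at arbitrarily small $s$-distance from one another and still be disjoint; $s$-proximity of a pair of points gives no a priori intersection of the saturated sets, because nothing controls the transverse (center-stable) drift of the two $u$-leaves as you move along them. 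In effect you have reduced the theorem to the assertion ``two $\cF^u$-minimal sets containing $s$-close points are equal,'' which is essentially the statement to be proved (your own construction shows every pair of $\cF^u$-minimal sets has an $s$-close pair of points), so the argument is circular at precisely the point you flag as ``the heart.''

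The Hausdorff-limit device does not repair this. You are right that each limit $\Lambda_i^\ast$ is closed and $\cF^u$-saturated and contains $z$, hence $\overline{\cF^u(z)}$, but a Hausdorff limit of $\cF^u$-minimal sets need not itself be $\cF^u$-minimal (it may strictly contain several minimal pieces), so the disjoint-or-equal dichotomy does not apply to $\Lambda_1^\ast,\Lambda_2^\ast$; and even if it did, equality of the limits would not transfer back to equality of the original $\Lambda_1,\Lambda_2$, since the limits involve unbounded iterates. Note also that in the present paper this is a borrowed result (Theorem~1.1 of~\cite{RodUre:19}) and is stated without proof, so there is no internal proof to compare against; but as written, your argument does not establish the theorem.
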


We now consider accessibility in the specific context of geodesic flows on negatively curved surfaces. It is well known that every geodesic flow on a negatively curved surface is Anosov and has minimal strong stable and unstable foliations \cite{Ano:67, Pla:72}. Additionally, it is important to note that the strong bundle distribution
$E^s\oplus E^u$  of any geodesic flow on such a surface is not integrable \cite{Pla:72}. With these considerations in mind, the following result is a direct consequence of \cite{RodUre:19, Wilk:98}. 

\begin{Theorem}[Theorem 4.1 in  \cite{RodUre:19} and Lemma 2.1 in \cite{Wilk:98}]
	\label{thm.both-usu-accessible-of-geodesic-flow}
	Let $(\varphi_t)_{t\in\mathbb{R}}$ be the geodesic flow on a negatively curved surface.    Then there exists a $C^1$-open neighborhood of $\cU$ of $\varphi_1$ such that every $f\in\cU$ is both usu-accessible and sus-accessible.
\end{Theorem}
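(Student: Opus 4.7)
The plan is to reduce both $usu$- and $sus$-accessibility for $f$ near $\varphi_1$ to two robust ingredients: (i) the minimality, with uniform modulus, of both $\cF^u_f$ and $\cF^s_f$; and (ii) the uniform non-integrability of $E^s_f \oplus E^u_f$, manifested as ``$su$-rectangles'' with uniformly bounded sides whose closing defect produces a controlled displacement along the center direction. For $\varphi_1$ itself, (i) is classical (Anosov, Plante), and (ii) is the well-known non-integrability of $E^s \oplus E^u$ for contact Anosov flows. Both properties are $C^1$-open: (i) uses that $\varphi_1$ is the time-one map of a contact Anosov flow with center tangent to the flow, a setting in which robust minimality of the strong foliations is part of Wilkinson's stable-ergodicity machinery; (ii) is a quantitative open condition on the Frobenius defect of a pair of transverse subbundles.

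Fix a $C^1$-neighborhood $\cU$ of $\varphi_1$ on which (i) and (ii) hold uniformly. For $f \in \cU$ and $x,y \in M$ I construct a $usu$-path of uniformly bounded length. By (i), for every small $\eta > 0$ there is $R = R(\eta)$ such that every $\cF^u_f$-disk of intrinsic radius $R$ is $\eta$-dense in $M$, uniformly in $f$. Fix a reference point $p$ and choose $\gamma_1 \subset \cF^u_f(x)$ of length at most $R$ reaching a point $x'$ within $\eta$ of $p$, and $\gamma_3 \subset \cF^u_f(y)$, traversed backwards, of length at most $R$ reaching $y'$ within $\eta$ of $p$. By the partially hyperbolic structure, $y'$ lies close to the center-stable leaf through $x'$; the obstruction to joining them by a single strong-stable arc $\gamma_2$ is a small displacement along the center direction. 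Ingredient (ii) provides uniformly bounded $su$-concatenations realizing any prescribed small center drift, which can be inserted inside $\gamma_1$ (or $\gamma_3$) to cancel this discrepancy.

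The same scheme yields $sus$-accessibility by swapping the roles of $\cF^s$ and $\cF^u$. The main obstacle is the normal-form step: inserting $su$-concatenations a priori breaks the clean $usu$ pattern. This is resolved using the one-dimensionality of the center together with the holonomies of $\cF^u$ along $\cF^s$; in the geodesic-flow model the center is the flow direction and these holonomies preserve a natural parametrization, so intermediate $s$-segments can be absorbed into the bounding $u$-segments up to uniformly bounded error. Compactness of $M$ and the continuous dependence of the bundles and their holonomies on $f$ in the $C^1$-topology produce uniform length bounds on all of $\cU$, yielding the constant $L$ in the definition of $usu$- and $sus$-accessibility. This implements the strategy of Wilkinson \cite{Wilk:98} and Rodriguez Hertz--Ures \cite{RodUre:19}.
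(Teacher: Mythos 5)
First, a point of comparison: the paper does not prove Theorem~\ref{thm.both-usu-accessible-of-geodesic-flow} at all --- it is quoted from \cite{RodUre:19} and \cite{Wilk:98}, and the only in-paper justification is the preceding paragraph, which records precisely your two ingredients (minimality of both strong foliations, non-integrability of $E^s\oplus E^u$). So your identification of the inputs matches the intended mechanism. The execution, however, has a genuine gap at the decisive step. After producing the pattern $u$--$s$--$u$ with a residual center discrepancy, you propose to insert uniformly bounded $su$-concatenations realizing the required center drift and then to ``absorb'' them into the bounding $u$-segments ``up to uniformly bounded error.'' This cannot work as stated: splicing a four-legged $su$-quadrilateral into a three-legged $usu$-path yields a strictly alternating seven-legged path, and since no two consecutive legs lie in the same strong leaf, nothing merges. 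Moreover, ``up to uniformly bounded error'' is meaningless for accessibility, which requires an exact path terminating at $y$. Since plain accessibility (with arbitrarily many legs) is classical here, the entire content of the theorem is the reduction to \emph{exactly} three legs of uniformly bounded length --- and that is exactly the step your argument leaves open.

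The correct mechanism --- the one this paper itself uses later, in the proof of Claim~\ref{c.twist-position} --- is a crossing statement rather than a drift-insertion: for $w\in\cF^s_{\varepsilon_0}(p)\setminus\cF^s_{\delta_0}(p)$, the stable holonomy image of $\cF^u_{\ell_0}(w)$ inside the local $cu$-plaque of $p$ meets \emph{both} components of $\cF^{cu}_{2\ell_0}(p)\setminus\cF^u_{2\ell_0}(p)$, hence crosses $\cF^u_{\mathrm{loc}}(p)$. One then takes the long arc $\cF^u_R(x)$, which by uniform minimality and the local product structure contains a plaque $\cF^u_{\ell_0}(w)$ with $w$ on the local stable leaf of some $p\in\cF^u_{\mathrm{loc}}(y)$ at $s$-distance in $[\delta_0,\varepsilon_0]$; the crossing then produces $z\in\cF^u_R(x)$ and $z'\in\cF^s_{\varepsilon_0}(z)\cap\cF^u(y)$, i.e.\ a genuine three-legged path $x\to^u z\to^s z'\to^u y$ with uniform bounds, and no extra legs ever appear. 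Two secondary issues: (a) routing both $x$ and $y$ to a fixed reference point $p$ is the wrong normalization --- what is needed is that the long $u$-arc from $x$ lands in the correct \emph{position relative to} $y$; and (b) the $C^1$-openness of the non-integrability cannot be justified by a ``Frobenius defect'' of the perturbed bundles, which are only H\"older continuous; the persistence of the crossing property under $C^1$-perturbation is precisely the nontrivial content of Wilkinson's Lemma~2.1 and rests on the uniform contact twisting of $\varphi_1$ together with the continuous dependence of the invariant plaques on the diffeomorphism.
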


We will use the following auxiliary result from \cite{CroPol:23}, see also \cite{ViYan:13}. 
Given a partially hyperbolic diffeomorphism $f$ with center bundle $E^c$, for $\chi >0$ define the set
\begin{equation}
	\label{e.Bchi}
	B^c_{\chi}  \eqdef
	\left\{x \in M \colon \, \liminf_{n\rightarrow+\infty}\frac{1}{n}\log\|Df^n|_{E^c(x)}\|\geq \chi\right\}.
\end{equation}

As the center foliation of the time-one map of an Anosov flow is $C^1$, by \cite{HirPugShu:77},  each $C^1$-nearby diffeomorphism is dynamically coherent.  In the next proposition, $\cF^c_{f,r}(x)$ 
denotes the ball of radius $r$ centered at $x$ in $\cF^c_f(x)$.

\begin{Proposition}[Proposition 8.3 in \cite{CroPol:23}]\label{p.finite-pliss}
	Let $\phi_1$ be the time one-map of an Anosov flow. Then there exists a $C^1$-open neighborhood $\cU$ of $\phi_1$ such that for every $\chi>0$ and every $r>0$, there is $k_0\in\mathbb{N}$ such that for every $f\in\cU$ and 
	every $x\in M$, one has  
	\[
	\#\big (\cF^c_{f, r}(x)\cap B_\chi^c \big)
	\leq k_0.  \]
\end{Proposition}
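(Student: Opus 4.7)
\proof[Proof plan]
The strategy is to combine the Pliss lemma (to extract hyperbolic times for each forward-expanding orbit) with a length-growth bound on center arcs that exploits the closeness of $f$ to the time-one map $\phi_1$. The neighborhood $\mathcal U$ will be chosen small enough that, by normal hyperbolicity of the one-dimensional center foliation of $\phi_1$ \cite{HirPugShu:77}, every $f\in\mathcal U$ is partially hyperbolic and dynamically coherent, its center foliation $\mathcal F^c_f$ consists of arcs $C^1$-close to orbits of $\phi_1$, and the continuous function $\log\|Df|_{E^c}\|$ is bounded on $M$ by a constant $C_0=C_0(\mathcal U)$ independent of $f$.

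Fix $\chi>0$ and $r>0$, and consider $y\in B^c_\chi$. Applying the Pliss lemma to the sequence $a_n=\log\|Df|_{E^c(f^n y)}\|$ yields a set $H_\chi(y)\subset\mathbb N$ of positive asymptotic density $\delta=\delta(\chi,C_0)>0$ consisting of \emph{$(\chi/2)$-hyperbolic times}: for every $n\in H_\chi(y)$, one has $\sum_{j=k}^{n-1}a_j\ge (n-k)\chi/2$ for all $0\le k\le n$. At each such $n$ a standard distortion argument produces a sub-arc $J^n_y\subset\mathcal F^c_f(y)$ containing $y$ that is mapped diffeomorphically by $f^n$ onto a sub-arc of uniform length $\rho_0=\rho_0(\chi,C_0)>0$ with bounded distortion and total expansion at least $e^{\chi n/2}$, so in particular $|J^n_y|\le C\rho_0 e^{-\chi n/2}$.

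Suppose for contradiction that $\#(\mathcal F^c_{f,r}(x)\cap B^c_\chi)\ge N$ with distinct points $y_1,\dots,y_N$. Counting pairs $(i,n)$ with $n\in H_\chi(y_i)\cap[1,T]$ gives at least $N\delta T-o(NT)$ such pairs for $T$ large, so by pigeonhole some $m\in[1,T]$ is a common hyperbolic time for a set of indices $\{i_1,\dots,i_K\}$ with $K\ge \lceil N\delta/2\rceil$. The key geometric observation is that any two distinct $y_{i_j},y_{i_{j'}}$ sharing the hyperbolic time $m$ must be separated on the center leaf by at least $c\rho_0 e^{-\chi m/2}$: otherwise the sub-arcs $J^m_{y_{i_j}}$ and $J^m_{y_{i_{j'}}}$ — each of which is mapped by $f^m$ with bounded distortion onto an arc of length $\rho_0$ — would have overlapping images occupying a disproportionate portion of $f^m(\mathcal F^c_{f,r}(x))$. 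Since the $K$ points lie in an arc of length $2r$, this forces $K\le 2r/(c\rho_0 e^{-\chi m/2})$, hence $N\le (4r/c\delta\rho_0)\,e^{\chi m/2}$.

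The final step is to close the inequality by bounding $m$: choosing $T$ as the smallest integer for which each $H_\chi(y_i)\cap[1,T]$ has at least $\delta T/2$ elements (a quantity controlled uniformly once the orbits of the $y_i$'s enter a Pesin block), one gets $m\le T=T(\chi,C_0,r)$, and therefore $N\le k_0(r,\chi,C_0)$. I expect the main obstacle to be this last quantitative control on $m$: producing a common hyperbolic time of bounded size requires avoiding the non-uniformity of the Birkhoff convergence, and the clean way is to partition $B^c_\chi\cap\mathcal F^c_{f,r}(x)$ according to Pesin blocks $\Lambda_\chi(\ell)$ of bounded size and apply the counting argument inside each block, using that points in a fixed block have uniform hyperbolic time structure from time zero. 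Summing over a finite family of blocks (finite because $\mathcal F^c_{f,r}(x)$ has bounded diameter) yields the uniform bound $k_0$.
\endproof
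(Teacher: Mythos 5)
This proposition is borrowed from \cite{CroPol:23} (Proposition~8.3 there) and the present paper does not reprove it, so there is no in-paper proof to compare against. Evaluating your plan on its own merits, there is a genuine gap, and it is the one you yourself flag.

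The Pliss-lemma/hyperbolic-time machinery gets you, for each $y\in B^c_\chi$, a positive-density set of hyperbolic times and, at a common hyperbolic time $m$, a separation $\gtrsim \rho_0 e^{-\chi m/2}$ between any two such points. But that separation degenerates as $m\to\infty$, and the resulting bound $N\lesssim e^{\chi m/2}$ is useless unless $m$ is bounded uniformly over all $N$-tuples of points and all $f\in\cU$. Your proposed fix — stratifying $B^c_\chi\cap\cF^c_{f,r}(x)$ into Pesin blocks $\Lambda_\chi(\ell)$ and claiming finiteness of the relevant $\ell$ because $\cF^c_{f,r}(x)$ has bounded diameter — does not work: the blocks $\Lambda_\chi(\ell)$ are indexed by $\ell\in\NN$ with first hyperbolic time growing in $\ell$, and there is no reason that the points of $B^c_\chi$ lying on a single bounded center arc belong to a bounded number of blocks. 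Bounded \emph{diameter} of the arc gives no control on the \emph{regularity index} of the points on it, which is exactly the non-uniformity you needed to tame.

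More fundamentally, your argument never uses the hypothesis that $\phi_1$ is the time-one map of an Anosov flow, beyond the generic bound $|\log\|Df|_{E^c}\||\le C_0$. That cannot be enough, because the statement is false for general $f\in\ph^{1+\alpha}_{c=1}(M)$: take a linear Anosov automorphism of $\TT^3$ with eigenvalues $\lambda_1<1<\lambda_2<\lambda_3$ and declare $E^c$ the weak-unstable line; then every point has center exponent $\log\lambda_2>0$, so $B^c_{\log\lambda_2}=\TT^3$ and the intersection with any center arc is a continuum. The flow structure is essential: for $f$ close to $\phi_1$, center leaves are $f$-invariant and $f$ acts on each leaf as a perturbation of a unit translation, which forces the length of $f^n(\cF^c_{f,r}(x))$ to grow at most linearly (not exponentially) in $n$. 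That linear length budget, together with the expansion $\ge e^{\chi m/2}$ produced at hyperbolic times, is the right tension to exploit; your plan substitutes the much weaker generic exponential length bound and thereby loses the mechanism that makes the proposition true.
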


\subsection{Proof of Theorem~\ref{thm.geodesic-flow-case}}\label{ss.thm.geodesic-flow-case}
By Theorem 7.1 in \cite{HirPugShu:77}, there exists a $C^1$-neighborhood $\cV$ of $\phi_1$
such that every $f\in\cV$ is dynamically coherent and every  leaf of the one dimensional center foliation 
of $f$ is  $f$-invariant. By Theorem~\ref{thm.both-usu-accessible-of-geodesic-flow}, up to shrinking $\cV$, one can assume that each  $f\in\cV$ is both usu-accessible and sus-accessible. In particular, by Theorem~\ref{thm.unique-u-minimal}, $f$ has a unique $\mathcal{F}^u$-minimal set $\Lambda$, 
thus $\Lambda$ is $f$-invariant. 

Let $\cU=\cV\cap\diff^1_{\mathrm{Leb}}(M)$. We can take a $C^2$-diffeomorphism $f\in \cU$.
As $f$ is usu-accessible, 
by Theorem~\ref{t.thenitisergodic}, the volume measure $\mathrm{Leb}$ is $f$-ergodic.
Up to considering $f^{-1}$ one  can assume that 
$$
\lambda^c (\mathrm{Leb})=\int\log\|Df|_{E^c}\|\ud\mathrm{Leb}\geq 0.
$$ 
If 
$\lambda^c (\mathrm{Leb})= 0$, one gets a nonhyperbolic measure, proving the theorem. Therefore, we 
can assume
that $\lambda^c (\mathrm{Leb})>0$ and hence,  by Theorem~\ref{thm.mostly-expanding-criterion}, $f$ is mostly expanding.
Thus,  by Proposition~\ref{p.intermediate}, either $f$ has some nonhyperbolic measure on $\Lambda$ (and we are done) or the  $\mathcal{F}^u$-minimal set $\Lambda$ is uniformly hyperbolic  with expanding center.  
To complete the proof of the theorem it is enough to prove the claim below. For that consider  
the following constants
\begin{equation}
	\label{e.Dbounds}
	\begin{split}
		C  \eqdef &\max\big\{\sup_{x\in M}\|Df(x)\|,\sup_{x\in M}\|Df^{-1}(x)\| \big\}>1,\\
		\lambda_{\rm min}  \eqdef & \inf_{x\in M} \|Df|_{E^c(x)}\|.
	\end{split}
\end{equation}

\begin{Claim-numbered}\label{cl.nothappen}
	The center bundle of $\Lambda$ is not uniformly expanding.
\end{Claim-numbered}

\begin{proof}
	Assume, by contradiction, that the center is uniformly expanding and thus there is $\chi>0$ such that 
	\begin{equation}\label{eq:center-expansion-of-Lambda}
		\liminf_{n\rightarrow+\infty}\frac{1}{n}\log\|Df^n|_{E^c}\|>\chi \quad \textrm{for every $x\in\Lambda$}.
	\end{equation}

	By the uniform continuity of the bundles $E^s,E^c,E^u$ and the non-joint integrability of the strong stable and unstable bundles of the geodesic flow, there exist $\ell_0>0$  and   $\e_0>C^{-1}\e_0>\delta_0>0$ small such that 
	(as $f$ is fixed we omit the dependence of the foliations on it):
	\begin{itemize} 
		\item 
		for every $x\in M$ and $y\in\cF^s_{\e_0}(x)$, the {\em{strong stable holonomy map}} 
		\[
		{\rm H}_{y,x}^s\colon\cF^{cu}_{\ell_0}(y)\to\mathcal{F}^{cu}_{2\ell_0}(x),
		\qquad z\mapsto \cF^s_{\e_0}(z)\cap \mathcal{F}^{cu}_{2\ell_0}(x)
		\]
		is well defined;
		\item 
		for every $x\in M$,
		the local strong unstable leaf $\cF^u_{2\ell_0}(x)$ splits $\mathcal{F}^{cu}_{2\ell_0}(x)$ into two connected components $D^+$ and $D^-$;
		\item   
		for every $y\in \cF^s_{\e_0}(x)\setminus \cF^s_{\delta_0}(x)$, the set ${\rm H}^s_{y,x}(\cF^u_{\ell_0}(y))$ intersects both $D^+$ and $D^-$. 
	\end{itemize}
	The last item is a well known fact for the
	time-one map of contact Anosov flows, it is related to the nonintegrability of 
	$E^s\oplus E^u$.
	
	Let $L>1$ be the number  in the definition of  usu-accessibility for $f$.
	Let us denote 
	\[\cF^s(\Lambda) \eqdef\bigcup_{y\in\Lambda}\cF^s(y). \]
	\begin{SubClaim-numbered}\label{sc.twist-position}
		For every $x\in M$ and every $\e>0$,  there are a compact  unstable disk
		$D\subset \cF^u_{L+2\ell_0}(x)$ and  a point $w^\prime \in\Lambda$ such that for every $z\in D$, 
		$\cF^c_{\e}(z)$ intersects the set
		$\cF_{\e_0}^s(\cF^u_{\ell_0}(w^\prime))$
		transversely at a unique point $a_{z,w'} \ne z$.
		
	\end{SubClaim-numbered}
	\proof
	Fix $x\in M$ and $\e>0$.
	Given any point $w\in \Lambda$, using the usu-accessibility of $f$, 
	we get points $x^\prime\in\cF^u_{L}(x)$ and  $w^\prime\in\cF^u_L(w)$ such that $w^\prime\in\cF^s_L(x^\prime)$. By the uniform expansion along $E^u$ and uniform contraction along $E^s$, up to some forward or backward iterates, one can assume that  $w^\prime\in\cF^s_{\e_0}(x^\prime)\setminus \cF^s_{\delta_0}(x^\prime)$  
	and hence consider the strong stable holonomy map 
	\[
	{\rm H}^s_{w^\prime, x^\prime}: \mathcal{F}^{cu}_{\ell_0}(w^\prime)\to\mathcal{F}^{cu}_{2\ell_0}(x^\prime).
	\] 
	\begin{figure}[!htbp]
		\begin{center}
			\def\svgwidth{\columnwidth}
\begingroup%
  \makeatletter%
  \providecommand\color[2][]{%
    \errmessage{(Inkscape) Color is used for the text in Inkscape, but the package 'color.sty' is not loaded}%
    \renewcommand\color[2][]{}%
  }%
  \providecommand\transparent[1]{%
    \errmessage{(Inkscape) Transparency is used (non-zero) for the text in Inkscape, but the package 'transparent.sty' is not loaded}%
    \renewcommand\transparent[1]{}%
  }%
  \providecommand\rotatebox[2]{#2}%
  \newcommand*\fsize{\dimexpr\f@size pt\relax}%
  \newcommand*\lineheight[1]{\fontsize{\fsize}{#1\fsize}\selectfont}%
  \ifx\svgwidth\undefined%
    \setlength{\unitlength}{261.37361112bp}%
    \ifx\svgscale\undefined%
      \relax%
    \else%
      \setlength{\unitlength}{\unitlength * \real{\svgscale}}%
    \fi%
  \else%
    \setlength{\unitlength}{\svgwidth}%
  \fi%
  \global\let\svgwidth\undefined%
  \global\let\svgscale\undefined%
  \makeatother%
  \begin{picture}(1,0.32623512)%
    \lineheight{1}%
    \setlength\tabcolsep{0pt}%
    \put(0,0){\includegraphics[width=\unitlength,page=1]{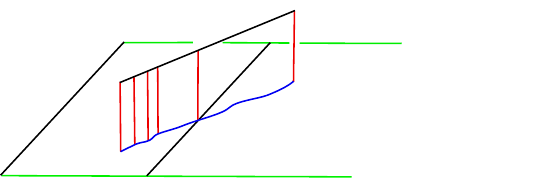}}%
    \put(0.30307831,0.0252861){\color[rgb]{0,0,0}\makebox(0,0)[lt]{\lineheight{1.25}\smash{\begin{tabular}[t]{l}$\mathcal{F}^u_{2\ell_0}(x^\prime)$\end{tabular}}}}%
    \put(0.54829544,0.30713528){\color[rgb]{0,0,0}\makebox(0,0)[lt]{\lineheight{1.25}\smash{\begin{tabular}[t]{l}$\mathcal{F}^u_{\ell_0}(w^\prime)$\end{tabular}}}}%
    \put(0.36406611,0.18925874){\color[rgb]{0,0,0}\makebox(0,0)[lt]{\lineheight{1.25}\smash{\begin{tabular}[t]{l}$\mathcal{F}^s_{\e_0}(x^\prime)$\end{tabular}}}}%
    \put(0,0){\includegraphics[width=\unitlength,page=2]{usu-accessible-1.pdf}}%
    \put(0.33807752,0.24109855){\color[rgb]{0.10196078,0.00392157,0.01960784}\makebox(0,0)[lt]{\lineheight{1.25}\smash{\begin{tabular}[t]{l}$w^\prime$\end{tabular}}}}%
    \put(0,0){\includegraphics[width=\unitlength,page=3]{usu-accessible-1.pdf}}%
    \put(0.37730651,0.08443925){\color[rgb]{0.10196078,0.00392157,0.01960784}\makebox(0,0)[lt]{\lineheight{1.25}\smash{\begin{tabular}[t]{l}$x^\prime$\end{tabular}}}}%
  \end{picture}%
\endgroup%

			\caption{Red lines denote strong stable plaques, black lines denote the strong unstable plaques and the green lines denote the center plaque.  }
		\end{center}
	\end{figure}
	Recall that ${\rm H}^s_{w^\prime, x^\prime}(\mathcal{F}^u_{\ell_0}(w^\prime))$ intersects the two connected components of $\mathcal{F}^{cu}_{2\ell_0}(x^\prime)\setminus\mathcal{F}^u_{2\ell_0}(x^\prime).$
	Since $\cF^{cs}_{\e_0}(w^\prime)\cap \mathcal{F}^u_{\ell_0}(w^\prime)\neq\emptyset$, by the uniform transversality between $\mathcal{F}^{cs}$ and $\mathcal{F}^u$ and the uniform continuity of the bundles $E^{cs}$ and $E^u$, there exists a compact  unstable  disc $D\subset\cF^u_{2\ell_0}(x^\prime)$ such that
	\begin{itemize}
		\item $D\cap {\rm H}^s_{w^\prime,x^\prime}(\cF^u_{\ell_0}(w^\prime))=\emptyset$;
		\item for every $z\in D$, one has $\cF^{cs}_{2\e_0}(z)\cap\mathcal{F}^u_{\ell_0}(w^\prime)\neq\emptyset$.
	\end{itemize}  
	The latter  item implies that 
	\[\cF^s_{2\e_0}(\mathcal{F}^u_{\ell_0}(w^\prime))\cap\cF^c_{2\e_0}(z)\neq\emptyset. \]
	Furthermore, this intersection is transverse and  consists of a single point that is different from $z$,  ending the proof of the subclaim.
	\endproof 
	
	For the next result recall the definition of  $B^c_{\chi} $ in \eqref{e.Bchi}.
	\begin{SubClaim-numbered}\label{sc.arbitrary-pliss-time}
		There exists a point  $z_0\in M$ such that 
		$$
		\#\left( \cF^c_{2 \ell_0} (z_0)\cap B^c_{\chi}  \right) =\infty.
		$$
	\end{SubClaim-numbered}
	
	Claim~\ref{cl.nothappen} follows immediately from this claim.
	Let $k_0$ be the integer given  by Proposition~\ref{p.finite-pliss} with respect to $\chi, 2\ell_0$.  By Subclaim~\ref{sc.arbitrary-pliss-time}, there exists a point $z_0\in  M$ satisfying  
	\[\#\bigg(\cF^c_{2\ell_0}(z)\cap B^c_\chi \Bigg) > k_0+1,
	\]
	contradicting Proposition~\ref{p.finite-pliss} and proving the lemma.  We now prove the claim.

	\begin{proof}[Proof of Subclaim~\ref{sc.arbitrary-pliss-time}]
		Fix a point $x_0\in M$. By  Subclaim~\ref{sc.twist-position}, there exist  a compact unstable curve
		$D_1\subset \cF^u_{L+2\ell_0}(x_0)$ and a point $w_1\in\Lambda$ such that for every $y\in D_1$, one has 
		$$
		\cF^c_{\ell_0}(y)\cap \cF_{\e_0}^s(\cF^u_{\ell_0}(w_1))= \{a_{y,w_1}\}.
		$$
		Denoting by
		$d^c$  the distance along center leaves,
		one has 
		\[\hat \e_1 \eqdef \inf_{y\in D_1}d^c\big(y, a_{y, w_1} \big)>0.
		\] 
		
		\begin{figure}[!htbp]
			\begin{center}
				\def\svgwidth{\columnwidth}
				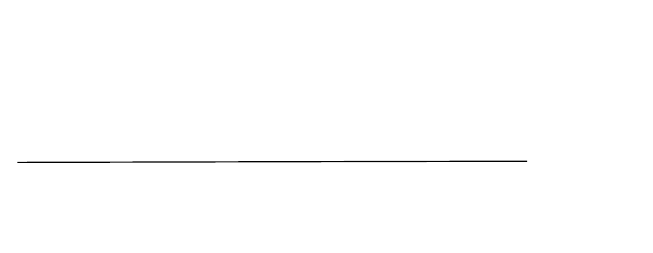
				\caption{Blue lines represents the intersection of $\mathcal{F}^{cu}(x_0)$ with $\mathcal{F}^s(\Lambda).$ }
			\end{center}
		\end{figure}

		By the uniform expansion of $f$ along strong unstable bundle, there exists $n_1\in\mathbb{N}$ such that $f^{n_1}(D_1)$ has diameter at least $L+2\ell_0$. Recall the definition of $\lambda_{\rm min}$ in \eqref{e.Dbounds} and define
		$$
		\e_1  \eqdef 2^{-1}\cdot\lambda_{\rm min}^{n_1}\cdot\hat\e_1.
		$$ 
		By Subclaim~\ref{sc.twist-position} and the comment above, there are  a compact unstable disc $\widehat D_2\subset f^{n_1}(D_1)$  and 
		a point $w_2\in\Lambda$ such that for every $y\in\widehat D_2$, one has 
		$$
		\cF^c_{\e_1}(y)\cap \cF_{\e_0}^s(\cF^u_{\ell_0}(w_2))= \{a_{y, w_2}\}
		\qquad \mbox{and} \qquad \hat \e_2 \eqdef \inf_{y\in D_1}d^c\big(y, a_{y, w_2} \big).
		$$ 
		By the choices of $\lambda_{\rm min}$ and $\e_1$, one has  
		$$ 
		a_{y, w_2} \not \in  f^{n_1}\big(\cF^c_{\ell_0}(f^{-n_1}(y))\cap \cF_{\e_0}^s(\cF^u_{\ell_0}(w_1))\big)
		$$
		and hence  $a_{y,w_2}\ne  a_{y,w_1}$.

		As the set $\Lambda$ is $\cF^u$-saturated and $w_1 \in \Lambda$, 
		taking $D_2=f^{-n_1}(\widehat D_2)\subset D_1$. One gets that 
		$$
		a_{y,w_2}, a_{y,w_1}\in \cF^c_{2\ell_0}(y)\cap \cF^s(\Lambda).
		$$ 
		Hence 
		\[
		\#\big(\cF^c_{2\ell_0}(y)\cap \cF^s(\Lambda)\big) \geq 2 \quad \textrm{for every $y\in D_2$}.
		\]
		Applying the above arguments inductively, one obtains a nested sequence of  compact unstable segments $(D_k)_{k\in\mathbb{N}}$
		such that  for every $y\in D_k$, one has 
		\[
		\#\big(\cF^c_{2\ell_0}(y)\cap \cF^s(\Lambda)\big) \geq k.
		\]
		
		By Equation~\eqref{eq:center-expansion-of-Lambda}, using the uniform contraction of $f$ along the strong stable foliation, one has 
		\[
		\liminf_{n\rightarrow+\infty}\frac{1}{n}\log\|Df^n|_{E^c(y)}\|\geq \chi\quad \textrm{for every $y\in\mathcal{F}^s(\Lambda)$}.
		\]
		Now considering any point $z_0\in \bigcap_{k\in\mathbb{N}}D_k$ the subclaim follows. 
	\end{proof} 
	
	The proof of Claim~\ref{cl.nothappen} is now complete
\end{proof}

As Theorem~\ref{thm.geodesic-flow-case} follows from Claim~\ref{cl.nothappen}, its proof is now complete.
\hfill \qed

\section{Skew-products: Proofs of Theorems~\ref{thm.nilmanifold-case} and Corollary~\ref{ctct.both-minimal}} \label{s.skewcircle}

Before proving Theorem~\ref{thm.nilmanifold-case} in Section~\ref{ss.proofsk},
we will first collect 
some results about measures of maximal entropy in Section~\ref{ss.maximalent}.
In Section \ref{ss.bothminim}, we  discuss the case where both strong foliations are minimal and prove 
Corollary~\ref{ctct.both-minimal}.

\subsection{Measures of maximal entropy for skew-products}
\label{ss.maximalent}

We now recall some properties of measures of maximal entropy for partially hyperbolic skew-products with circle fibers. A partially hyperbolic skew-product with a circle fiber is said to be of \emph{rotation type} if it is $C^0$-conjugate to a rotation extension of an Anosov diffeomorphism on the torus. It was shown in \cite{Rodetal:12} that every
accessible partially hyperbolic $C^2$-diffeomorphism on a three-manifold with one dimensional center bundle,
where the center foliation forms a circle bundle,  is either of rotation type or possesses finitely many ergodic measures of maximal entropy. Subsequent improvements to this result can be found in \cite{TahYan:19,RocTah:22,UreViYan:21}. Here, we use the following characterization of measures of maximal entropy for skew-products, as derived from \cite{UreViYan:21}.

\begin{Theorem}[Theorem C in \cite{UreViYan:21}]\label{thm.skew-product-mme}
	Let $f\in\ph^2_{c=1}(M)$ be accessible and dynamically coherent. Assume that  the center foliation $\cF^c$ forms a circle bundle and the quotient space $M^c = M/\cF^c$ is a torus. 
	Then the following dichotomy holds: 
	\begin{itemize}
		\item Either there is a unique measure of maximal entropy and this 
		measure is nonhyperbolic and has full support;
		\item or there are  finitely many ergodic measures of maximal entropy 
		$$
		\mu_1,\cdots,\mu_\ell, \nu_1,\cdots,\nu_m, \quad
		\ell, m\ge 1,
		$$ 
		such that 
		\begin{itemize}
			\item[--] every $\mu_i$ has positive center Lyapunov exponent 
			and $\supp(\mu_i)$ has finitely many connected components, each of which is a $\cF^s$-minimal set;
			\item[--]  every $\nu_i$ has negative center Lyapunov exponent and $\supp(\nu_i)$ has finitely many connected components, each of which is a $\cF^u$-minimal set.
		\end{itemize}
	\end{itemize}
	If the first case of the dichotomy holds then  $f$ is of rotation type.
\end{Theorem}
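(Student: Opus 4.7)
My plan is to reduce the analysis to the base dynamics and then use Pesin theory together with accessibility to classify the ergodic measures of maximal entropy. Since the center foliation forms a circle bundle over the torus $M^c$, the projection $\pi \colon M \to M^c$ semiconjugates $f$ to a map $\bar f \colon M^c \to M^c$. The pushforward $\pi_\ast$ sends $E^s$ and $E^u$ to a hyperbolic splitting on $M^c$, so $\bar f$ is an Anosov diffeomorphism of the torus and therefore admits a unique measure of maximal entropy $\bar\mu$. Because each center fiber is a circle, which carries no topological entropy for a single homeomorphism, the Ledrappier--Walters fiber entropy formula gives $h_{\mathrm{top}}(f) = h_{\mathrm{top}}(\bar f)$, and every ergodic MME $\mu$ of $f$ projects to $\pi_\ast \mu = \bar\mu$.

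Next I would split the ergodic MMEs of $f$ according to the sign of the center Lyapunov exponent $\lambda^c(\mu)$. If $\lambda^c(\mu) > 0$, then by Pesin theory the Pesin stable manifold of $\mu$-almost every point coincides with its strong stable leaf. Combined with absolute continuity of $\cF^s$ and a Hopf-type argument, this forces $\supp(\mu)$ to be $\cF^s$-saturated; the decomposition of a compact $\cF^s$-saturated invariant set into connected components then produces the required $\cF^s$-minimal pieces, cyclically permuted by $f$. The symmetric case $\lambda^c(\mu) < 0$ yields $\cF^u$-minimal components. Finiteness of the number of such MMEs would follow from disjointness of their supports together with a uniform lower bound on $\bar\mu$-mass of their projections, so only finitely many can coexist.

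For the zero-exponent case, I would show that if some ergodic MME satisfies $\lambda^c(\mu) = 0$, then $\mu$ is the unique MME, has full support, and moreover $f$ is of rotation type. Uniqueness would come from disintegrating $\mu$ along center fibers: the condition $\lambda^c(\mu)=0$ together with ergodicity implies that the fiberwise conditional measures are invariant under the return dynamics on each circle, and accessibility propagates this structure globally, ruling out alternative nonhyperbolic MMEs. Full support follows from the fact that the $\cF^s$- and $\cF^u$-saturation of $\supp(\mu)$ is invariant and, by accessibility, equals $M$. To obtain rotation type, I would argue that the absence of hyperbolic MMEs forces the fiber dynamics to preserve a continuously varying normalized measure on each circle, and invoke the classification of circle extensions of Anosov maps preserving such families of measures to conclude the existence of a topological conjugacy with a rotation extension of $\bar f$.

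The hardest step is the rotation-type characterization. It requires ruling out the subtle scenario in which a nonhyperbolic MME could coexist with other invariant structures lacking rotational symmetry, and amounts to a rigidity statement that depends on the interplay between accessibility (to propagate fiber information globally), dynamical coherence (to have a legitimate quotient), and the entropy equality $h_{\mathrm{top}}(f) = h_{\mathrm{top}}(\bar f)$. I would expect this step to proceed by analyzing the fiber cocycle over $\bar\mu$ and using Livsic-type cohomological arguments to trivialize it, thereby identifying $f$ as topologically conjugate to a rotation extension of the Anosov base.
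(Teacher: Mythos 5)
First, a point of order: the paper does not prove this statement at all --- it is quoted as Theorem C of \cite{UreViYan:21} (building on \cite{Rodetal:12}), so there is no internal proof to compare against. Your sketch does follow the broad strategy of that literature: pass to the quotient dynamics $\bar f$ on $M^c$, use the Ledrappier--Walters formula to get $h_{\mathrm{top}}(f)=h_{\mathrm{top}}(\bar f)$ and force every ergodic measure of maximal entropy to project to the unique MME of the (topologically) Anosov base, then split according to the sign of $\lambda^c$ and treat the zero-exponent case by an invariance principle.

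However, several steps you assert do not work as stated. (1) The $\cF^s$-saturation and minimality of the components of $\supp(\mu)$ when $\lambda^c(\mu)>0$ does not follow from ``Pesin theory plus absolute continuity plus a Hopf-type argument'': the support of a hyperbolic ergodic measure is in general not saturated by its Pesin stable manifolds (a periodic Dirac measure is the simplest counterexample), and the Hopf argument is a tool for ergodicity of smooth measures, not for saturation of supports. What is actually used is the special structure of an MME: since $h_\mu(f)=h_{\bar\mu}(\bar f)$ and $\lambda^c(\mu)\neq 0$, the conditionals of $\mu$ on center circles are finitely many equal-mass atoms equivariant under stable holonomy, and it is this atomic structure that yields $s$-saturated, $\cF^s$-minimal components. (2) Your finiteness argument (``disjointness of their supports together with a uniform lower bound on $\bar\mu$-mass of their projections'') collapses: every ergodic MME projects to the \emph{same} measure $\bar\mu$, so the projections are never disjoint and all have mass one; and distinct ergodic measures need not have disjoint supports. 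Finiteness again comes from counting atoms in the fiber disintegration. (3) You do not address why the two alternatives are exclusive (why a nonhyperbolic MME cannot coexist with hyperbolic ones), nor why $\ell\ge 1$ and $m\ge 1$ hold simultaneously; both are part of the statement. (4) The invariance principle you invoke is not a consequence of ``ergodicity plus $\lambda^c=0$'' alone; it requires the entropy identity $h_\mu(f)=h_{\pi_\ast\mu}(\bar f)$ (Ledrappier, Avila--Viana, Tahzibi--Yang), and the rigidity yielding rotation type comes from holonomy-invariance of the fiber conditionals combined with accessibility, not from a Livsic cohomological argument. The skeleton of your reconstruction is recognizable, but these gaps are precisely where the content of \cite{UreViYan:21} lies.
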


\begin{Remark}\label{r.skew-product-mme}
	{\em{	In the second case of the dichotomy in Theorem~\ref{thm.skew-product-mme}, since the support of every ergodic measure $\mu_i$ is $f$-invariant, each connected component of $\supp(\mu_i)$ is $f^{k_i}$-invariant for some $k_i \in \mathbb{N}$, leading to an $f^{k_i}$-invariant and $\mathcal{F}^s$-minimal set $\Lambda^s_i$. Moreover, 
			using partial hyperbolicity, if follows from \cite{Ga:02, Cro:11, Gel:16} that
			$\supp(\mu_i)$ contains periodic points with positive center Lyapunov exponents, so does $\Lambda^s_i$. Similarly, there are $f^{n_j}$-invariant and $\mathcal{F}^u$-minimal sets  $\Lambda_j^u$ which contains periodic points with negative center Lyapunov exponent.}}
\end{Remark}

\subsection{Proof of Theorem~\ref{thm.nilmanifold-case}}
\label{ss.proofsk}

We are now ready to prove Theorem~\ref{thm.nilmanifold-case}.
By Theorem~\ref{thm.skew-product-mme} and Remark~\ref{r.skew-product-mme}, one has the following dichotomy
\begin{itemize}
	\item either $f$ has a unique measure of maximal entropy which is nonhyperbolic;
	\item or 
	$f$ has  $\ell$-ergodic measures $\mu$ of maximal entropy with  $\lambda^c(\mu)>0$ whose supports are $\mathcal{F}^s$-minimal sets that contain  points $p\in\mathrm{Per}(f)$ with $\lambda^c(p)>0$ and
	$m$-ergodic measures $\nu$ of maximal entropy with $\lambda^c(\nu)<0$ whose supports are $\mathcal{F}^u$-minimal sets that contain points $p\in\mathrm{Per}(f)$ with $\lambda^c(p)<0$,
\end{itemize}
The first case gives rise to a nonhyperbolic  measure, and we are done. Thus it is enough consider the second case.

Note that
since $f$ is accessible,  so is $f^k$. Then the volume measure $\mathrm{Leb}$ is $f^k$ and $f$-ergodic due 
Theorem~\ref{t.thenitisergodic}. If $\mathrm{Leb}$ has zero center Lyapunov exponent for $f$, then  $\mathrm{Leb}$ is nonhyperbolic 
and we are done. 

It remains to consider the case when $\mathrm{Leb}$ is hyperbolic.
As the measure $\mathrm{Leb}$ is $f^k$-ergodic for every $k$. Our assumption implies that it also hyperbolic for $f^k$.
By the comment above,  $f^k$ also has hyperbolic  measures of maximal entropy. 

By Remark~\ref{r.skew-product-mme},  there exists $k\in\mathbb{N}$ such that there exist an $f^k$-invariant $\mathcal{F}^u$-minimal set $\Lambda^u$ and an $f^k$-invariant $\mathcal{F}^s$-minimal set $\Lambda^s$. Furthermore, $\Lambda^u$ contains points
$p\in\mathrm{Per}(f)$ with $\lambda^c(p)<0$
and $\Lambda^s$ contains points $p\in\mathrm{Per}(f)$ with $\lambda^c(p)>0$.
Up to considering $f^{-k}$, one can assume that $\lambda^c(\mathrm{Leb}, f^k)>0$ (considered with respect to $f^k$).
Thus $f^k$ is mostly expanding due to Theorem~\ref{thm.mostly-expanding-criterion}. 	 
Now $\Lambda^u$ satisfies the assumption of Theorem~\ref{thm.existence}, thus $f^k$ has a nonhyperbolic  measure, which is also a nonhyperbolic measure for $f$.  The proof of the theorem is now complete.
\hfill \qed
\endproof

\subsection{Proof of Corollary~\ref{ctct.both-minimal}}
\label{ss.bothminim}
Note that if $f$ or $f^{-1}$ has some nonhyperbolic $u$-Gibbs state, then we obtain nonhyperbolic measures.
Thus, from now on, we  can assume that all ergodic $u$-Gibbs states for $f$ and all ergodic $u$-Gibbs states for $f^{-1}$ are hyperbolic. We have two cases to consider:
\begin{itemize}
	\item[(i)]  either  every ergodic  $u$-Gibbs state $\mu$ for $f$
	satisfies $\lambda^c(\mu)>0$;
	\item[(ii)] or  there exists some ergodic  $u$-Gibbs state $\mu$  with
	$\lambda^c(\mu) <0$. 
\end{itemize} 

Consider first the case (i), where $f$ is by definition mostly expanding. Thus
we are precisely in the setting of Proposition~\ref{p.intermediate} 
for $f$ with $\Lambda=M$. Therefore  
\begin{itemize}
	\item either $f$ has  nonhyperbolic measures;
	\item  or $f$ is Anosov with uniformly expanding center,
\end{itemize}
proving the theorem in this case.

It remains to consider the case (ii). 
As we are considering forward and backward dynamics, for the sake of clearness, we will include $f$ or $f^{-1}$ in the notation of the exponents.

\begin{Claim-numbered}
	\label{cl.onlyone}
	Let $\mu\in \Gib (f)$  with $\lambda^c(\mu,f)<0$.
	Then  $\mu=\mathrm{Leb}$. Moreover, every ergodic $u$-Gibbs state $\nu$ of $f^{-1}$ satisfies $\lambda^c(\nu, f^{-1})>0$.
\end{Claim-numbered}

Note that in this case $f^{-1}$ is mostly expanding. As the strong stable foliation of $f$ is minimal, the hypotheses of Proposition~\ref{p.intermediate} 
hold for $f^{-1}$ with $\Lambda=M$. Thus either there exists some nonhyperbolic measure or $f$ is Anosov with uniformly contracting center, proving the theorem.

Let us prove the claim.

\begin{proof}[Proof of Claim~\ref{cl.onlyone}]
	As $\mu\in \Gib(f)$ with $\lambda^c(\mu, f)<0$, 
	by the minimality of the strong unstable foliation and the absolute continuity of the Pesin's stable manifolds \cite{Pes:77}, $\mu$ is the unique $u$-Gibbs state of $f$. By the absolute continuity of the strong unstable foliation, the volume measure 
	$\mathrm{Leb}$ is a $u$-Gibbs state, and thus $\mu=\mathrm{Leb}.$ Note that $\lambda^c(\mathrm{Leb}, f)<0$.
	
	To prove the second part, arguing as in the first part, if there is a $u$-Gibbs state $\nu$ of $f^{-1}$ with 
	$\lambda^c(\nu, f^{-1})<0$ then
	$\nu=\mathrm{Leb}$ and hence $\lambda^c(\mathrm{Leb}, f^{-1})<0$, that is $\lambda^c (\mathrm{Leb}, f)>0$, a contradiction. This ends the proof of the claim.
\end{proof}

\bibliographystyle{alpha}

\newcommand{\etalchar}[1]{$^{#1}$}

 
\vspace{2mm}
 
 \begin{tabular}{l l l}
	\emph{\normalsize Lorenzo J. D\'iaz}
	& \quad &
	\emph{\normalsize Jiagang Yang}
	\medskip\\ 	  
	\small  Departamento de Matem\'atica 
	&& \small Departamento de Geometria, 
	\\
	\small PUC-Rio 
	&&	\small Instituto de Matem\'atica e Estat\'istica, 
	 \\ 
	\small 	Rio de Janeiro 22451-900, Brazil
		&& \small  Universidade
	Federal Fluminense\\
	\small \texttt{lodiaz@puc-rio.br}
	&& \small  Niter\'oi,  24020-140, Brazil\\
	&& \small \texttt{jiagangyang@id.uff.br}\\
	&& \small
	\bigskip\\
	\emph{\normalsize Jinhua Zhang}
	\medskip\\
	\small School of Mathematical Sciences\\ 
	\small Beihang University\\ 
	\small Beijing, 100191, P. R.  China\\ 
	\small \texttt{jinhua$\_$zhang@buaa.edu.cn}\\
	\small \texttt{zjh200889@gmail.com}
	
\end{tabular}

\end{document}